\newcommand{\nc}{\newcommand}
\numberwithin{equation}{section}
\newtheorem{thm}{Theorem}[section]
\newtheorem{prop}[thm]{Proposition}
\newtheorem{lem}[thm]{Lemma}
\newtheorem{cor}[thm]{Corollary}
\newtheorem{rem}[thm]{Remark}
\newtheorem{example}[thm]{Example}
\newtheorem{dfn}[thm]{Definition}
\newtheorem{conj}[thm]{{\bf Conjecture}}
\nc{\la}{\lambda}
\nc{\al}{\alpha }
\nc{\be}{\beta }
\nc{\ve}{\varepsilon }
\nc{\om}{\omega }
\nc{\hk}{\twoheadrightarrow}
\nc{\msl}{\mathfrak{sl}}
\newcommand{\bC}{{\mathbb C}}
\newcommand{\bZ}{{\mathbb Z}}
\newcommand{\bP}{{\mathbb P}}
\newcommand{\fh}{{\mathfrak h}}
\newcommand{\fn}{{\mathfrak n}}
\newcommand{\fg}{{\mathfrak g}}
\newcommand{\fb}{{\mathfrak b}}
\newcommand{\bd}{{\bf d}}
\newcommand{\bI}{{\bf I}}
\newcommand{\U}{{\mathrm U}}
\newcommand{\rk}{{\mathrm{rk}}}
\newcommand{\floor}[1]{\lfloor #1 \rfloor}
\newcommand{\ceil}[1]{\lceil #1 \rceil}
\begin{document}

\title[Dellac configurations and flag varieties]
{Symmetric Dellac configurations and symplectic/orthogonal flag varieties}

\author{Ange Bigeni}
\address{Ange Bigeni:\newline
Department of Mathematics,\newline
National Research University Higher School of Economics,\newline
Usacheva str. 6, 119048, Moscow, Russia}
\email{ange.bigeni@gmail.com}

\author{Evgeny Feigin}
\address{Evgeny Feigin:\newline
Department of Mathematics,\newline
National Research University Higher School of Economics,\newline
Usacheva str. 6, 119048, Moscow, Russia,\newline
{\it and }\newline
Skolkovo Institute of Science and Technology, Nobelya Ulitsa 3, Moscow 121205, Russia}
\email{evgfeig@gmail.com}

\begin{abstract}
The goal of this paper is to study the
link between the topology of the degenerate flag varieties and combinatorics of the Dellac configurations.
We define three new classes of algebraic varieties closely related to the degenerate flag varieties of types A and C.
The construction is based on the quiver Grassmannian realization of the degenerate flag varieties and odd symplectic and 
odd and even orthogonal groups. We study basic properties of our varieties; in particular, we construct cellular decomposition 
in all the three cases above (as well as in the case of even symplectic group). We also compute the Poincar\'e polynomials
in terms of certain statistics on the set of symmetric Dellac configurations.   
\end{abstract}

\maketitle

\section*{Introduction}
The goal of this paper is to develop further the link between the topology of the degenerate flag varieties 
and quiver Grassmannians from one side and combinatorics of Dellac configurations from the other side.

The PBW degeneration of classical flag varieties of type $A$ was introduced in \cite{F3} in Lie theoretic terms. The resulting 
varieties $F^a_N$ have very explicit linear algebra description. Namely, let $E$ be an $N$-dimensional vector space with 
a basis $e_1,\dots,e_N$ and let $pr_k:E\to E$ be the projection along the $k$-th basis vector to the linear span 
of the rest basis vectors. Then $F^a_N$ consists of collections $(V_1,\dots,V_{N-1})$ of subspaces of $E$ such that $\dim V_k=k$ and
$pr_{k+1}V_k\subset V_{k+1}$ for all $k$. Theses varieties are singular, but share many nice properties with their classical analogues, see 
\cite{F1,F2,FFi,H,LS}. It was shown that the varieties $F^a_N$ can be realized as Schubert varieties \cite{CL,CLL} and
as quiver Grassmannains \cite{CFR1}. The Betti numbers of the varieties  $F^a_N$ are given by the normalized median Genocchi 
numbers $1,2,7,38,295,\dots$ \cite{Ba,DR,DZ,B2,B3,B4,De,Du,F2}. 
These numbers have several definitions; the initial one due to Dellac (the most important for us)  is based on the notion of 
Dellac configurations -- $n\times 2n$ tableaux with $2n$ marked boxes satisfying certain conditions.
The Poincar\'e polynomials of the degenerate flag varieties thus
provide natural $q$-analogues of the normalized median Genocchi numbers (see \cite{F1,F2,HZ1,HZ2,ZZ}).     
These polynomials can be explicitly described in terms of certain inversion counting statistics on the set of 
Dellac configurations. 

The PBW degeneration procedure works for arbitrary simple Lie groups. The case of $Sp_{2n}$ was considered in
\cite{FFL}. It was shown that the corresponding degenerate flag varieties $SpF^a_{2n}$ can be explicitly described as follows.
Let $E$ be a $2n$-dimensional vector space with a basis $e_1,\dots,e_{2n}$ and a non-degenerate skew-symmetric form
defined by $(e_i,e_{2n+1-i})=1$ for $1\le i\le n$. Then  $SpF^a_{2n}$ is defined as a subvariety of $F^a_{2n}$ consisting
of collections $(V_1,\dots,V_{2n-1})$ such that $V_k=V_{2n-k}^\perp$. The combinatorics of the torus fixed points counting for
$SpF^a_{2n}$ is described in \cite{B1,FF}. In particular, it was shown in \cite{FF} that the number $r_n$ of torus fixed
points of 
$SpF^a_{2n}$ is given by the number of the so-called symplectic Dellac configurations (in what follows we call them
symmetric Dellac configurations, the reason will become clear shortly). These are $2n\times 4n$ Dellac 
configurations symmetric with respect to the center. The main theorem of 
\cite{B1} states that the numbers $r_n$ appears in \cite{RZ} and can be expressed in terms of the surjective pistols.
The sequence $r_n$ starts with $1,2,10,98,1594$.

There are several natural questions arising in this context.
\begin{itemize}
\item How to define the statistics on the set of symplectic Dellac configurations to compute the Poincar\'e
polynomials of $SpF^a_{2n}$?
\item How to compute the number $l_n$ of symmetric Dellac configurations of size $(2n+1)\times (4n+2)$?
\item Are the numbers $l_n$ related to the odd symplectic group?
\item Does there exist an orthogonal analogue of the symplectic story?
\end{itemize}

In this paper we answer all these questions except for the third one, which is addressed in the companion paper \cite{BF}.
In particular, we show in \cite{BF} that the numbers $l_n$ also pop up in \cite{RZ}. Here are the first several 
entries of this sequence: $1,3,21,267,5349$.
 
Let us give a general definition of the varieties we are interested in. Let $E$ be an $N$-dimensional complex vector space 
equipped with a symmetric or skew-symmetric bilinear form. We assume that the rank of the form is as large
as possible, i.e. the form has one-dimensional kernel in the skew-symmetric case for odd $N$ and has no kernel otherwise.
Let $n=\floor{N/2}$ and let us choose a basis $e_1,\dots,e_N$ of $E$ in such a way that $(e_k,e_{N+1-k})=1$  for $k=1,\dots, n$
and $(e_i,e_j)=0$ if $i+j\ne N+1$.  
We define the variety $F^a_E$ consisting of collections of subspaces $V_1,\dots,V_n$ of $E$
such that 
\begin{itemize}
\item $\dim V_k=k$, $k=1,\dots,n$,
\item $pr_{k+1}V_k\subset V_{k+1}$, $k=1,\dots,n-1$,
\item $(V_n,V_n)=0$ for even $N$ and $(V_n,pr_{n+1}V_n)=0$ for odd $N$.   
\end{itemize}
In particular, if $N$ is even and the form is symplectic, $F^a_E$ is isomorphic to the symplectic degenerate
flag variety. In the main body of the paper we denote the varieties $F^a_E$ by $SpF^a_N$ and $SOF^a_N$ depending
on the fixed form.

Here are the first basic properties of the varieties $F^a_E$.
\begin{thm}
For all $N \geq 1$, the variety $SpF^a_N$ is irreducible of dimension $\ceil{N/2}\floor{N/2}$.
The variety $SOF^a_N$ is reducible equi-dimensional of dimension $\ceil{\frac{N-1}{2}}\floor{\frac{N-1}{2}}$.
The number of irreducible components of $SOF^a_N$ is equal to $2^{\floor{N/2}}$.
\end{thm}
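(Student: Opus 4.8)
We treat the three assertions separately. For $SpF^a_N$ we use the action of the degenerate (for odd $N$, odd) symplectic group $Sp^a_N$ produced by PBW degeneration, and prove irreducibility by exhibiting its dense orbit; the dimension is then computed separately. For $SOF^a_N$ the novel point is the reducibility, which we extract from a stratification of a dense open subset by a discrete invariant whose strata close up to the irreducible components.

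\emph{Symplectic case.} For even $N=2n$ the variety $SpF^a_{2n}$ is the symplectic degenerate flag variety, proved irreducible of dimension $n^2=\ceil{N/2}\floor{N/2}$ in \cite{FFL}, so we may assume $N=2n+1$. One first checks that $Sp^a_{2n+1}$ acts on $SpF^a_{2n+1}$ with a dense open orbit, hence $SpF^a_{2n+1}$ is irreducible. For the dimension, note that the radical $\langle e_{n+1}\rangle$ of the form is orthogonal to all of $E$, so the condition $(V_n,pr_{n+1}V_n)=0$ is equivalent to $(V_n,V_n)=0$; passing to the nondegenerate symplectic space $E/\langle e_{n+1}\rangle$ of dimension $2n$ gives a surjective morphism from the open locus in which $e_{n+1}$ lies in none of the $V_k$ onto $SpF^a_{2n}$, with irreducible generic fibre of dimension $n$. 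Hence $\dim SpF^a_{2n+1}=n^2+n=\ceil{N/2}\floor{N/2}$ (alternatively, compute the tangent space at a generic, hence smooth, point).

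\emph{Orthogonal case.} The reducibility originates in the $\floor{N/2}$ hyperbolic planes $H_k=\langle e_k,e_{N+1-k}\rangle$, each of which contains exactly the two isotropic lines $\langle e_k\rangle$ and $\langle e_{N+1-k}\rangle$. We construct a map $\tau\colon U\to(\bZ/2\bZ)^{\floor{N/2}}$ on a dense open $U\subset SOF^a_N$ recording, for each $k$, on which of the two sides of $H_k$ the flag $(V_1,\dots,V_n)$ degenerates --- the degenerate analogue of the type that distinguishes the two families of maximal isotropic subspaces in the classical orthogonal Grassmannian --- and then prove: (a) $\tau$ is locally constant and surjective; (b) for each $\varepsilon$ the closure $Y_\varepsilon=\overline{\tau^{-1}(\varepsilon)}$ is irreducible --- for instance via a dense orbit of (the identity component of) the degenerate orthogonal group $SO^a_N$, or via a dominant morphism from $\tau^{-1}(\varepsilon)$ to a degenerate symplectic flag variety with affine fibres; (c) $\dim Y_\varepsilon=d:=\ceil{\frac{N-1}{2}}\floor{\frac{N-1}{2}}$ for \emph{every} $\varepsilon$. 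Granting (a)--(c): $SOF^a_N=\bigcup_\varepsilon Y_\varepsilon$ is a union of $2^{\floor{N/2}}$ irreducible closed subsets of the common dimension $d$, and since the $\tau^{-1}(\varepsilon)$ are pairwise disjoint dense open subsets of the $Y_\varepsilon$, no $Y_\varepsilon$ is contained in another; thus the $Y_\varepsilon$ are precisely the irreducible components. Therefore $SOF^a_N$ is reducible (for $N\ge 2$), equi-dimensional of dimension $d$, and has $2^{\floor{N/2}}$ irreducible components.

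\emph{Main obstacle.} The heart of the matter is (c): in contrast to the classical orthogonal flag variety, where the analogous ``deep'' strata have strictly smaller dimension, after the PBW degeneration all $2^{\floor{N/2}}$ strata must close up to subvarieties of one and the same dimension $d$, although they are pairwise non-isomorphic in general. We expect to prove this by induction on $N$, peeling off the outermost hyperbolic pair $\{e_1,e_N\}$ so as to relate $SOF^a_N$ to $SOF^a_{N-2}$ and $SpF^a_{N-1}$ through affine-bundle maps and tracking both $\tau$ and the fibre dimensions; it is the affine- rather than projective-bundle nature of these maps that forces all components to have the same dimension. Equivalently, (c) together with the exact count $2^{\floor{N/2}}$ can be deduced once the cellular decomposition of $SOF^a_N$ is available, from the fact that exactly $2^{\floor{N/2}}$ symmetric Dellac configurations realise the maximal value $d$ of the relevant inversion statistic. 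A minor secondary point is the control of the locus $\{e_{n+1}\in V_n\}$ in the odd symplectic case, which is circumvented by the dense-orbit argument.
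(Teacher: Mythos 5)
Your symplectic sketch is essentially in line with the paper: the paper proves irreducibility of $SpF^a_{2n+1}$ by constructing an abelian unipotent group with an open dense orbit, and obtains the dimension $n(n+1)$ via the smooth resolution $SpR_{2n+1}$ (a tower of $\bP^1$-fibrations). Your alternative dimension count via the projection to $E/\langle e_{n+1}\rangle$ is plausible but unverified — you would need to check that the generic fibre is an affine space of dimension $n$ and that the surjection is well behaved near the closed locus $\{e_{n+1}\in\text{some }V_k\}$; the paper's resolution avoids these fibre-dimension issues entirely.

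For the orthogonal case there is a genuine gap. Your plan (a)–(c) hinges on three unestablished claims, and you openly concede (c) — equi-dimensionality of the closures $Y_\varepsilon$ — as the ``main obstacle.'' But (a) and (b) are also not obviously available: the invariant $\tau$ is never concretely defined on $SOF^a_N$ (the natural ``type'' invariants of the isotropic spaces $V_{k,N-k}$ live on the auxiliary data of a resolution, not on the flag $(V_1,\dots,V_n)$ itself, and $V_n$ alone furnishes only one $\bZ/2$-valued datum, not $\floor{N/2}$ of them), and irreducibility of each $Y_\varepsilon$ is merely asserted. The paper settles all three points at once by constructing the variety $SOR_N$ of extended collections $(V_{i,j})$ with $i+j\le N$ and showing directly — by peeling off the $V_{i,j}$ one at a time — that it is a disjoint union of $2^{\floor{N/2}}$ smooth towers of $\bP^1$-fibrations of the common dimension $\ceil{\frac{N-1}{2}}\floor{\frac{N-1}{2}}$; each antidiagonal step $i+j=N$ contributes a two-point fibre (the two isotropic lines in a hyperbolic plane), which is exactly the source of your $(\bZ/2)^{\floor{N/2}}$. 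The birational forgetful map $SOR_N\to SOF^a_N$ then transports the component count and the dimension. In short, the $\bP^1$-tower structure of the resolution is the missing ingredient that simultaneously defines your $\tau$, proves irreducibility of the strata, and forces equi-dimensionality; without it the stratification argument does not close.
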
 
In particular, $SOF^a_N$ are not isomorphic to the degenerate flag varieties for the group $SO_N$ (the latter are irreducible). 

The main theorem we prove in the paper is as follows.
\begin{thm}
The varieties $F^a_E$ admit cellular decomposition into complex affine cells. The Euler characteristics of 
$F^a_E$ coincides with the number of symmetric Dellac configurations. The Poincar\'e polynomials are computed
via two natural statistics responsible for symplectic and orthogonal cases.  
\end{thm}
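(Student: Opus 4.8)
The plan is to obtain the cellular decomposition from a torus action, to read off the torus fixed points combinatorially as symmetric Dellac configurations, and to extract the Poincar\'e polynomial from the dimensions of the attracting cells. First I would fix the $n$-dimensional torus $T$ acting on $E$ by $e_k\mapsto t_ke_k$, $e_{N+1-k}\mapsto t_k^{-1}e_{N+1-k}$ for $k=1,\dots,n$ (fixing the middle vector $e_{n+1}$ when $N$ is odd). This is the diagonal torus of the group preserving the form, so it acts on $F^a_E$; moreover $T$ acts with pairwise distinct weights $\pm\chi_1,\dots,\pm\chi_n$ (and $0$ in the odd case) on the basis vectors, so its fixed points on every $\mathrm{Gr}(k,E)$, hence on $F^a_E$, are isolated and equal to the coordinate subspaces. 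As $F^a_E$ is projective, a generic one-parameter subgroup $\gamma\colon\bC^*\to T$ --- one whose fixed points coincide with those of $T$ --- gives rise to the Bia\l ynicki--Birula decomposition $F^a_E=\bigsqcup_{p\in(F^a_E)^T}C_p$ with $C_p=\{x:\lim_{t\to0}\gamma(t)x=p\}$.

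The first substantial step is to describe $(F^a_E)^T$ combinatorially. A fixed point is a chain of coordinate subspaces $V_k=\langle e_i:i\in S_k\rangle$, $|S_k|=k$, with $S_k\setminus\{k+1\}\subseteq S_{k+1}$ for all $k$ and with the isotropy restriction on $S_n$ dictated by the last defining relation of $F^a_E$ ($i\in S_n\Rightarrow N+1-i\notin S_n$ for even $N$, with the modification involving the index $n+1$ for odd $N$). I would then construct an explicit bijection between such chains and the symmetric Dellac configurations of the appropriate size, encoding a chain by the successive ``entering'' index sets $S_{k+1}\setminus\bigl(S_k\setminus\{k+1\}\bigr)$; the reflection symmetry of the configuration mirrors the constraint $t_kt_{N+1-k}=1$ built into $T$. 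For even $N$ with the symplectic form this recovers the correspondence of \cite{FF,B1}; the odd symplectic and the even and odd orthogonal cases require a careful but elementary adaptation, and this is where the enumeration $l_n$ of \cite{BF} appears.

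Next I would prove that every cell $C_p$ is an affine space. The cleanest route uses the quiver Grassmannian realization together with the involution $\sigma(V_1,\dots,V_{N-1})=(V_{N-1}^\perp,\dots,V_1^\perp)$ on the type $A$ degenerate flag variety $F^a_N$, whose fixed locus is $F^a_E$ and which commutes with the form-preserving torus $T$. Since $F^a_N$ admits a $T$-equivariant decomposition into affine cells in which, in suitable Schubert charts, the incidence relations $pr_{k+1}V_k\subseteq V_{k+1}$ become linear (following \cite{CFR1}), a generic $\gamma\in T$ yields such a decomposition that is $\sigma$-stable, with $\sigma$ permuting the cells by $C_p\mapsto C_{\sigma p}$ and acting linearly on each; hence $F^a_E=(F^a_N)^{\sigma}=\bigsqcup_{\sigma p=p}(C_p)^{\sigma}$ and each $(C_p)^{\sigma}$ is an affine subspace. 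The genuinely new point, and what I expect to be the main obstacle, is that the isotropy relation at $V_n$ must also be linearized in the relevant chart (equivalently, the chart of the isotropic Grassmannian involved must be realized as an affine space of symmetric, respectively antisymmetric, matrices); this, together with the reducibility and equidimensionality of $SOF^a_N$ recorded above, will force us to treat the symplectic and orthogonal middle conditions separately, and one may instead prefer to argue intrinsically through the symplectic/orthogonal quiver Grassmannian rather than through $\sigma$.

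Granting affineness of the cells, the remainder is bookkeeping. The Euler characteristic equals $|(F^a_E)^T|$, hence the number of symmetric Dellac configurations by the second step, and specializing $q=1$ recovers the counts $r_n$ and $l_n$. Writing $d_p=\dim_\bC C_p$, the Poincar\'e polynomial is $\sum_p q^{d_p}$, and $d_p$ is the number of $\gamma$-positive weights in the tangent chart at $p$. The final task is to convert this weight count into an explicit statistic on the associated symmetric Dellac configuration: counting the positive weights amounts to counting certain crossings and inversions among the marked boxes, and because the middle isotropy condition differs genuinely in the symplectic and orthogonal realizations one obtains two statistics --- one governing $SpF^a_N$, one governing $SOF^a_N$ --- which I would define directly on symmetric Dellac configurations and match term by term with $d_p$. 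A consistency check is that for even $N$ and the symplectic form this must reproduce the known $q$-analogue of the normalized median Genocchi numbers from \cite{F1,F2}.
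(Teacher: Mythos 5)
Your proposal follows a genuinely different route, and it has two real gaps. The paper never uses a Bia\l ynicki--Birula decomposition --- which in its usual form requires smoothness, and $F^a_N$ is singular --- nor the involution $\sigma$. It works inside the partial degenerate flag variety $F^a_{(1,\dots,n)}$, takes the explicit affine-cell parametrization \eqref{celldescrA} from~\cite{F1}, and then shows in Lemma~\ref{lem:ac} (and its odd-symplectic and orthogonal analogues) that the single isotropy condition on $V_n$ cuts out $C(\bI)=F^a_E\cap C_A(\bI)$ by a system of \emph{linear} equations of the form $x_{a_1,2n+1-a_2}\pm x_{a_2,2n+1-a_1}=0$ in the cell coordinates. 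This is simultaneously what gives affineness and what produces the statistics: counting the independent linear constraints column by column gives the inductive formula \eqref{eq:inductiondk} for $d_k$, whose translation into inversion counts on symmetric Dellac configurations yields $\widetilde{inv}$ and $\widetilde{inv}'$ (Theorems~\ref{thm:algorithm} and~\ref{thm:dimDellac} and their orthogonal counterparts).

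The first gap in your plan is the assertion that $\sigma$ ``acts linearly on each'' cell $C_p$ of $F^a_N$. The orthogonal-complement map $V\mapsto V^\perp$ is not linear in Schubert-chart coordinates, and even if $C_p$ is $\sigma$-stable there is no reason its restriction should be a linear involution of the ambient affine chart; you would still have to produce that linearization by hand, which is exactly what Lemma~\ref{lem:ac} does, only after discarding the redundant spaces $V_{n+1},\dots,V_{N-1}$ and keeping the single condition on $V_n$. The second gap is structural: in the odd symplectic case the form has a one-dimensional kernel, so $\perp$ does not induce a well-defined involution of $F^a_{2n+1}$, and there is no canonical completion of $(V_1,\dots,V_n)\in SpF^a_{2n+1}$ to a full chain (the paper records this explicitly in a remark in Section~\ref{OSC}); your fixed-locus framing simply does not apply there. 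You do flag the fall-back of arguing ``intrinsically through the symplectic/orthogonal quiver Grassmannian,'' which is essentially the paper's route, but you leave it as an aside rather than carrying it out --- and it is precisely there (linearizing the isotropy condition inside a cell, and matching codimensions with statistics on symmetric Dellac configurations) that the content of the theorem lives.
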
 

Our paper is organized as follows. 
In Section \ref{Classical} we recall basic facts about flag varieties for classical Lie groups.
In Section \ref{DFV} main results on the degenerate flag varieties in types $A$ and $C$ are collected.
In Section \ref{ESC} we construct and study cellular decomposition of the even symplectic degenerate flag varieties.
We construct a statistics on the set of symmetric Dellac configurations responsible for the Poincar\'e polynomials
of $SpF^a_N$.  
Section  \ref{OSC} is devoted to the study of the varieties $SpF^a_N$ for odd $N$. We prove that these varieties are irreducible,
construct cellular decomposition, compute Euler characteristics and Poincar\'e polynomials in terms of the symmetric Dellac configurations.
In Section  \ref{OC} we define and study the varieties $F^a_E$ for non-degenerate orthogonal form. 
We find the number of irreducible components, construct cellular decomposition and compute the Poincar\'e polynomials
in terms of yet another statistics on the set of symmetric Dellac configurations.
Finally, in Appendix examples of Poincar\'e polynomials of $F^a_E$ are given. Based on these examples,
we conjecture that the Poincar\'e polynomials in all types are unimodular.

\section{Classical picture}\label{Classical}
Let $\fg$ be a finite-dimensional simple Lie algebra with the Cartan decomposition
$\fg=\fn\oplus\fh\oplus \fn_-$ and let $\fb=\fn\oplus\fh$ be the Borel sualgebra. We denote by $G,N,H,N_-$ and $B$ 
the corresponding Lie groups. For a parabolic subgroup $P\supset B$ the corresponding flag variety is defined as the 
quotient $G/P$. In particular, if $P=B$ then $G/B$ is called the full flag variety. 

Let $\om_i$ and $\al_i$, $i=1,\dots,{\rm rk}\fg$ be the fundamental weights and simple roots.
Let $\la\in\fh^*$ be a dominant integral weight (i.e. $\la=\sum_{i=1}^{{\rm rk}\fg} m_i\om_i$, $m_i\in\bZ_{\ge 0}$) and let 
$V_\la$ be the corresponding finite-dimensional irreducible
highest weight $\fg$-module. In particular, for a highest weight vector $v_\la\in V_\la$ on has $\fn v_\la=0$ 
and $V_\la=\U(\fn_-)v_\la$. We consider a map $G/P\to \bP(V_\la)$ defined by mapping the class of an element $g\in G$
to the line spanned by $gv_\la$. The map is obviously $G$-equivariant and is an embedding if $\la$ is regular,
i.e. $m_i>0$ for ll $i$. 

Let $\la$ be the sum of all fundamental weights. Then $V_\la$ sits inside the tensor product of all the fundamental representations
as the Cartan component and one gets the embedding 
$$G/P\subset \prod_{i=1}^{\rk\fg} G/P_i\subset \prod_{i=1}^{\rk\fg} \bP(V_{\om_i}),$$  
where $P_i$ is the maximal parabolic subgroup corresponding to $\om_i$. For classical Lie algebras this embedding can be
described very explicitly in terms of linear algebra, the details are given below.

\subsection{Type A}
Type $A_{N-1}$ refers to the group $SL_N$.
The type $A_{N-1}$ flag variety $F_N$ is defined as the variety of collections $V_1\subset V_2\subset\dots\subset V_{N-1}$
of subspaces of an $N$-dimensional vector space $E$
 such that $\dim V_k=k$ for all $k$. The group $SL_N$ acts transitively on $F_N$.

The coordinate flags (for a fixed basis $e_1,\dots,e_N$) are labeled by the permutation group $S_N$ which is the Weyl group of
for $SL_N$. For $\sigma\in S_N$ the corresponding coordinate flag is given by $V_k={\mathrm{span}} (e_{\sigma(i)})_{i=1}^k$.
In particular, there are $N!$ such flags. 

The variety $F_N$ can be cut into disjoint union of cells (the so called Schubert cells). Each cell contains exactly one 
coordinate flag. The dimension of the cell containing the coordinate flag corresponding to $\sigma\in S_N$ is equal to
the length of $\sigma$ (the number of inversions of $\sigma$).

\subsection{Type B}
Type $B_n$ refers to the orthogonal group $SO_{2n+1}$ consisting of square matrices $g$ of size $(2n+1)$ such that $\det g=1$
and $g$ preserves a nondegenerate symmetric bilinear form (i.e. $(gu,gv)=(u,v)$ for all vectors $u,v$). In what follows
we fix a basis $e_1,\dots,e_{2n+1}$ such that $(e_i,e_j)=\delta_{i+j,2n+2}$.
  
The type $B_n$ flag variety $SOF_{2n+1}$ is defined as the variety of collections $V_1\subset V_2\subset\dots\subset V_n$
of subspaces of a $(2n+1)$-dimensional vector space $E$ such that $\dim V_k=k$ for all $k$ and each $V_k$ is isotropic
(it is enough to say that  $V_n$ is isotropic). 
We have natural embedding $SOF_{2n+1}\subset F_{2n+1}$ defined by
\[
(V_1,\dots,V_n)\mapsto (V_1,\dots,V_n,V_n^\perp,\dots,V_1^\perp).
\]
The group $SO_{2n+1}$ acts transitively on $SOF_{2n+1}$.

The coordinate flags (for our fixed basis $e_1,\dots,e_{2n+1}$ of $E$) are labeled by collections $(I_1,\dots,I_n)$ of subsets of
$\{1,\dots,2n+1\}$ such that $|I_k|=k$, $I_k\subset I_{k+1}$ and $a\in I_n$ implies $2n+2-a\notin I_n$. There are exactly
$2^nn!$ of such coordinates flags and this is the cardinality of the Weyl group of type $B_n$.

The variety $SOF_{2n+1}$ can be cut into disjoint union of cells (the so called Schubert cells). Each cell contains exactly one 
coordinate flag. 

\subsection{Type C}
Type $C_n$ refers to the symplectic group $Sp_{2n}$ consisting of square matrices $g$ of size $2n$ such that $\det g=1$
and $g$ preserves a nondegenerate skew-symmetric bilinear form (i.e. $(gu,gv)=(u,v)$ for all vectors $u,v$). In what follows
we fix a basis $e_1,\dots,e_{2n}$ such that $(e_i,e_j)=\delta_{i+j,2n+1}$ for $i=1,\dots,n$.
  
The type $C_n$ flag variety $SpF_{2n}$ is defined as the variety of collections $V_1\subset V_2\subset\dots\subset V_n$
of subspaces of a $2n$-dimensional vector space $E$ such that $\dim V_k=k$ for all $k$ and each $V_k$ is isotropic
(it is enough to say that  $V_n$ is isotropic). 
We have natural embedding $SpF_{2n}\subset F_{2n}$ defined by
\[
(V_1,\dots,V_n)\mapsto (V_1,\dots,V_n,V_{n-1}^\perp,\dots,V_1^\perp).
\]
The group $Sp_{2n}$ acts transitively on $SpF_{2n}$.

The coordinate flags (for our fixed basis $e_1,\dots,e_{2n}$) are labeled by collections $(I_1,\dots,I_n)$ of subsets of
$\{1,\dots,2n\}$ such that $|I_k|=k$, $I_k\subset I_{k+1}$ and $a\in I_n$ implies $2n+1-a\notin I_n$. There are exactly
$2^nn!$ of such coordinates flags and this is the cardinality of the Weyl group of type $C_n$ (the same as in type $B_n$).
As before, $SpF_{2n}$ is the disjoint union of Schubert cells each containing exactly one coordinate flag.
We note that the varieties $SpF_{2n}$ and $SOF_{2n+1}$ are not isomorphic. 

\subsection{Type D}
Type $D_n$ refers to the orthogonal group $SO_{2n}$ consisting of square matrices $g$ of size $2n$ such that $\det g=1$
and $g$ preserves a nondegenerate symmetric bilinear form. In what follows
we fix a basis $e_1,\dots,e_{2n}$ such that $(e_i,e_j)=\delta_{i+j,2n+1}$.
  
The type $D_n$ flag variety $SOF_{2n}$ is defined as the variety of collections of subspaces of our $2n$-dimensional vector space
$V_1\subset V_2\subset\dots\subset V_n$ such that $\dim V_k=k$ for all $k$ and $V_n$ is isotropic.
However, in contrast with the cases above, there is one extra condition: the dimension of the space
$V_n/(V_n\cap {\rm span} (e_i)_{i=1}^n)$ must be even. If one does not impose such a condition, then the resulting
variety is reducible. The group $SO_{2n}$ acts transitively on $SOF_{2n}$ and as
before we have natural embedding $SOF_{2n}\subset F_{2n}$.

The coordinate flags (for our fixed basis $e_1,\dots,e_{2n}$) are labeled by collections $(I_1,\dots,I_n)$ of subsets of
$\{1,\dots,2n\}$ such that $|I_k|=k$, $I_k\subset I_{k+1}$, $a\in I_n$ implies $2n+2-a\notin I_n$ and 
one extra condition saying that the cardinality of the set $\{a\in I_n, a>n\}$ is even. There are exactly
$2^{n-1}n!$ of such coordinates flags and this is the cardinality of the Weyl group of type $D_n$ and the Euler characteristics 
of $SOF_{2n}$.

\subsection{Odd type C}
Following \cite{M,Pe} (see also \cite{Pr}) we consider a $(2n+1)$-dimensional vector space $E$ and a skew-symmetric 
bilinear form of maximal possible rank (i.e. of rank $2n$).
We fix a basis $e_1,\dots,e_{2n+1}$ such that $(e_i,e_j)=\delta_{i+j,2n+2}$ for $i=1,\dots,n$, $(e_{n+1},v)=0$ for all $v$.
The group of operators preserving this form is called the odd symplectic group $Sp_{2n+1}$. The important difference
with  all the previous cases is that this group is no longer simple and thus does not fit into the Cartan classification.
However, one can still define the corresponding flag variety in the same way as above. Namely,
we consider the flag variety $SpF_{2n+1}$ defined as the variety of collections of subspaces of our $(2n+1)$-dimensional vector space
$V_1\subset V_2\subset\dots\subset V_n$ such that $\dim V_k=k$ for all $k$ and $V_n$ is isotropic.
We note that since our skew-symmetric form has a one-dimensional kernel, the maximal dimension of an isotropic
subspace is not $n$, but $(n+1)$. So one can also study an extended version adding one more isotropic subspace $V_{n+1}$ to our chain.

\section{Degenerate flag varieties of types A and C}
\label{DFV}
In this section we recall basic definitions and results on the degenerate flag varieties of types $A$ and $C$ 
(see \cite{F1,F2,F3,FFL}).

Let $F_s\subset \U(\fn_-)$, $s=0,1,\dots$ be (increasing) Poincar\'e-Birkhoff-Witt filtration on the universal enveloping
algebra. The associated graded algebra is isomorphic to the symmetric algebra $S(\fn_-)$, which can be identified
with the universal enveloping algebra $\U(\fn_-^a)$ of the abelian Lie algebra $\fn_-^a$ whose underlying
space is equal to $\fn_-^a$. The PBW filtration $F_s$ induces the
increasing filtration $F_sv_\la$ on $V_\la$. The associated graded space is denoted by $V_\la^a$, which is naturally a cyclic 
$S(\fn_-)$ module. 

Let $N_-^a=\exp(\fn_-^a)$ be the abelian unipotent group isomorphic to the $\dim \fn_-$ copies of the additive group of the field.
We define the degenerate flag variety $F_\la^a$ as the closure of the orbit $N_-^a v_\la$ inside $\bP(V_\la^a)$.
In types $A$ and $C$ the varieties $F_\la^a$ depend only on the regularity class of $\la$. In particular, $F_\la^a\simeq F_\mu^a$
if both $\la$ and $\mu$ are regular.

It was proved in \cite{F3,FFL} that in types $A$ and $C$ the varieties $F^a_\la$ enjoy very concrete description in terms of linear
algebra. This description fits into the general framework of the quiver Grassmannians theory \cite{CFR1}. 
We give the details below.

\subsection{Type A}
Let us fix a basis $e_1,\dots,e_N$ of a complex vector space $E$ and projections
$pr_k:E\to E$ such that $pr_ke_l=(1-\delta_{k,l})e_l$.
Let ${\rm Gr}(k,N)$ be the Grassmannian of $k$-dimensional subspaces in $E$. 
Then the degenerate flag variety for $SL_N$ is given by
\[
F^a_N=\{(V_1,\dots,V_{N-1}): V_k\in {\rm Gr}(k,N), pr_{k+1}V_k\subset V_{k+1}\}.
\]
The dimension of $F^a_N$ is $N(N-1)/2$ and the Euler characteristic is given by the normalized
median Genocchi number. More precisely, the variety enjoys a cellular decomposition and all the cells are of even (real) dimension.
The cells are labeled by the
collections ${\bf I}=(I_1,\dots,I_{N-1})$ consisting of subsets of the set $\{1,\dots,N\}$
subject to the conditions
\[
|I_k|=k,\ I_k\subset I_{k+1}\cup\{k+1\}.
\]
Each such a collection defines a torus fixed point $p(\bI)\in F^a_N$, whose entries $(V_k)_{k=1}^{N-1}$ are 
${\mathrm{span}}(e_i)_{i\in I_k}$. We denote the cell containing $p(\bI)$ by $C(\bI)$. 
In order to describe $C(\bI)$ we prepare some notation. 
Let us introduce $N-1$ different orderings $>_k$, $k=1,\dots,N~-~1$ on the set $\{1,\dots,N\}$.
Namely, for $1\le k\le N~-~1$ we set
\begin{equation}\label{orderingA}
k>_k k-1 >_k \dots >_k 1>_k N>_k N-1>_k\dots >_k k+1.
\end{equation}
The cell $C(\bI)$ consists of collections of subspaces $(V_1,\dots,V_{N-1})$ such that for all $k$ the space
$V_k$ has a basis of the form (see  \cite{F1})
\begin{equation}\label{celldescrA}
e_{i_a}+\sum_{b<_k i_a, b\notin I_k} x_{a,b}e_b,\ a=1,\dots,k,\ i_a\in I_k,\ x_{a,b}\in\bC. 
\end{equation}
As a consequence, we obtain the following formula for the complex dimension of $C(\bI)$.
Let us call an element $i\in I_k$ initiating, if either $i=k$ or $i\notin I_{k-1}$ (we assume that $I_0=\emptyset$). 
Let $ID_k\subset I_k$ be the set of initiating elements. 
Then one has
\begin{equation}\label{celldimA}
\dim C(\bI) = \sum_{k=1}^{N-1} \sum_{i\in ID_k} |\{j<_k i: j\notin I_k\}|.
\end{equation}

\begin{rem}
Formula \eqref{celldimA} can be easily derived from the explicit description \eqref{celldescrA}
by computing the number of free parameters (taking into account the condition $pr_{k+1}V_k\subset V_{k+1}$). 
\end{rem}

\begin{rem}
Let us consider the equioriented type $A_{N-1}$ quiver and its representation $M=P\oplus I$, which is the direct
sum of all indecomposable projective representations and all indecomposable injective representations (see \cite{CFR1}).
In particular, $\dim M_k=N$, $1\le k\le N-1$ and the maps $M_k\to M_{k+1}$ are given by $pr_{k+1}$ (after 
fixing appropriate bases in $M_k$). Then each $\bI$ gives rise to a subrepresentation $S(\bI)$ of $M$ of dimension 
$(1,\dots,N~-~1~)$: the $S(\bI)_k$ is spanned by $e_a$, $a\in I_k$.   
Then the initiating elements are in one-to-one correspondence with the indecomposable summands of $S(\bI)$.
\end{rem}

Let us note that all the results above generalize to the case of partial flag varieties.
Namely, given a collection $\bd=(d_1,\dots,d_s)$ with $1\le d_1<\dots<d_s\le N-1$ one defines the partial degenerate
flag variety $F^a_\bd$ as the projection of $F^a_N$ to the product $\prod_{i=1}^s {\rm Gr}(d_i,N)$. One easily sees that
each cell $C(\bI)$ in  $F^a_N$ projects to a  cell $C(\bI_\bd)$ in $F^a_\bd$ thus providing a cellular decomposition of $F^a_\bd$
(here $\bI_\bd=(I_{d_1},\dots,I_{d_s})$). 
The dimension of the cell  $C(\bI_\bd)$ is computed by the formula \eqref{celldimA} with the only
reservation that the sum in the right hand side of \eqref{celldimA} is taken over $k\in\{d_1,\dots,d_s\}$.

Finally, recall~\cite{F1} that a convenient way to parametrize the cells of $F_N^a$ goes through the Dellac 
configurations $DC_N$~\cite{De}. A Dellac configuration  $\mathcal{D} \in DC_N$ is a tableau made of $N$ columns and $2N$ rows, 
containing $2N$ points such that~:
\begin{itemize}
\item every row contains exactly one point;
\item every column contains exactly two points;
\item if there is a point in the box $(j,i)$ of $\mathcal{D}$ (\textit{i.e.}, in its $j$-th column from left to right 
and its $i$-th row from bottom to top), then $j \leq i \leq N+j$.
\end{itemize}
If a box $(j,i)$ of a configuration contains a point $p$, we say that $p = (j,i)$. In \cite{F1}, the author proved 
that if $\mathcal{D} \in DC_N$ corresponds to a given cell $C$ of $F_N^a$, then $\dim(C)$ is the number of \textit{inversions} 
of $\mathcal{D}$, \textit{i.e.}, the number of pairs $((j,i),(j',i'))$ of points of $\mathcal{D}$ such that $j < j'$ and $i > i'$. 
For example, in Figure \ref{fig:DC3}, we depict the 7 elements of $DC_3$, whose inversions are represented by segments, computing 
the Poincar\'e polynomial $P_{F_3^a}(q) = 1+2q+3q^2+q^3$.

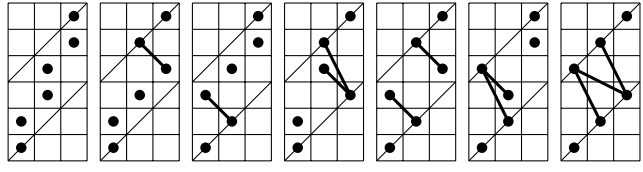
\begin{figure}[!htbp]

\begin{center}

\begin{tikzpicture}[scale=0.35]
\draw (0,0) grid[step=1] (3,6);
\draw (0,0) -- (3,3);
\draw (0,3) -- (3,6);
\fill (0.5,0.5) circle (0.2);
\fill (0.5,1.5) circle (0.2);
\fill (1.5,2.5) circle (0.2);
\fill (1.5,3.5) circle (0.2);
\fill (2.5,4.5) circle (0.2);
\fill (2.5,5.5) circle (0.2);

\begin{scope}[xshift=3.5cm]
\draw (0,0) grid[step=1] (3,6);
\draw (0,0) -- (3,3);
\draw (0,3) -- (3,6);
\fill (0.5,0.5) circle (0.2);
\fill (0.5,1.5) circle (0.2);
\fill (1.5,2.5) circle (0.2);
\fill (2.5,3.5) circle (0.2);
\fill (1.5,4.5) circle (0.2);
\fill (2.5,5.5) circle (0.2);

\draw[line width=0.4mm,color=black] (1.5,4.5) -- (2.5,3.5);
\end{scope}

\begin{scope}[xshift=7cm]
\draw (0,0) grid[step=1] (3,6);
\draw (0,0) -- (3,3);
\draw (0,3) -- (3,6);
\fill (0.5,0.5) circle (0.2);
\fill (1.5,1.5) circle (0.2);
\fill (0.5,2.5) circle (0.2);
\fill (1.5,3.5) circle (0.2);
\fill (2.5,4.5) circle (0.2);
\fill (2.5,5.5) circle (0.2);

\draw[line width=0.4mm,color=black] (0.5,2.5) -- (1.5,1.5);
\end{scope}

\begin{scope}[xshift=10.5cm]
\draw (0,0) grid[step=1] (3,6);
\draw (0,0) -- (3,3);
\draw (0,3) -- (3,6);
\fill (0.5,0.5) circle (0.2);
\fill (0.5,1.5) circle (0.2);
\fill (2.5,2.5) circle (0.2);
\fill (1.5,3.5) circle (0.2);
\fill (1.5,4.5) circle (0.2);
\fill (2.5,5.5) circle (0.2);

\draw[line width=0.4mm,color=black] (1.5,4.5) -- (2.5,2.5);
\draw[line width=0.4mm,color=black] (1.5,3.5) -- (2.5,2.5);
\end{scope}

\begin{scope}[xshift=14cm]
\draw (0,0) grid[step=1] (3,6);
\draw (0,0) -- (3,3);
\draw (0,3) -- (3,6);
\fill (0.5,0.5) circle (0.2);
\fill (1.5,1.5) circle (0.2);
\fill (0.5,2.5) circle (0.2);
\fill (2.5,3.5) circle (0.2);
\fill (1.5,4.5) circle (0.2);
\fill (2.5,5.5) circle (0.2);

\draw[line width=0.4mm,color=black] (0.5,2.5) -- (1.5,1.5);
\draw[line width=0.4mm,color=black] (1.5,4.5) -- (2.5,3.5);
\end{scope}

\begin{scope}[xshift=17.5cm]
\draw (0,0) grid[step=1] (3,6);
\draw (0,0) -- (3,3);
\draw (0,3) -- (3,6);
\fill (0.5,0.5) circle (0.2);
\fill (1.5,1.5) circle (0.2);
\fill (1.5,2.5) circle (0.2);
\fill (0.5,3.5) circle (0.2);
\fill (2.5,4.5) circle (0.2);
\fill (2.5,5.5) circle (0.2);

\draw[line width=0.4mm,color=black] (0.5,3.5) -- (1.5,2.5);
\draw[line width=0.4mm,color=black] (0.5,3.5) -- (1.5,1.5);
\end{scope}

\begin{scope}[xshift=21cm]
\draw (0,0) grid[step=1] (3,6);
\draw (0,0) -- (3,3);
\draw (0,3) -- (3,6);
\fill (0.5,0.5) circle (0.2);
\fill (1.5,1.5) circle (0.2);
\fill (2.5,2.5) circle (0.2);
\fill (0.5,3.5) circle (0.2);
\fill (1.5,4.5) circle (0.2);
\fill (2.5,5.5) circle (0.2);

\draw[line width=0.4mm,color=black] (0.5,3.5) -- (2.5,2.5);
\draw[line width=0.4mm,color=black] (0.5,3.5) -- (1.5,1.5);
\draw[line width=0.4mm,color=black] (1.5,4.5) -- (2.5,2.5);
\end{scope}
\end{tikzpicture}

\end{center}

\caption{The $7$ elements of $DC_3$.}
\label{fig:DC3}

\end{figure}

\subsection{Type C}
Now let us define the symplectic degenerate flag variety $SpF^a_{2n}$. 
We fix a symplectic (skew-symmetric nondegenerate) form on $E=\bC^{2n}$ by setting $(e_k,e_l)=\delta_{k+l,2n+1}$, $1\le k\le n$.
Then the symplectic degenerate flag variety $SpF^a_{2n}$ has the following explicit realization (see \cite{FFL}):
\[
SpF^a_{2n}=\{(V_1,\dots,V_n):\ \dim V_k=k, pr_{k+1}V_k\subset V_{k+1}, V_n \text{ is Lagrangian}\}. 
\]
We note that $\dim SpF^a_{2n}=n^2$ and the Euler characteristic is given by the number of symplectic
Dellac configurations $S \in SDC_{2n}$ defined in~\cite{FF}. These are elements $S \in DC_{2n}$ such that $R(S) = S$, where $R$ is 
the central reflection with respect to the center of $S$. 
In other words $SDC_{2n}$ is the subset of $DC_{2n}$ made of the elements 
that contain a point in a box $(j,i) \in \{1,\dots,2n\} \times \{1,\dots,4n\}$ if and only if the box $(2n+1-j,4n+1-i)$ also 
contains a point. In what follows we refer to the elements of $SDC_{2n}$ as symmetric Dellac configurations
(since we will also need the Dellac configurations with odd number of columns symmetric with respect to the center). 

\begin{rem}\label{CtoA}
We have two natural embeddings of $SpF^a_{2n}$ into the type $A$ degenerate flag varieties. 
First, one can embed  $SpF^a_{2n}$ into $F^a_{2n}$ by the map
\[
(V_1,\dots,V_n)\mapsto (V_1,\dots,V_n,V_{n-1}^\perp,\dots, V_1^\perp)
\]
(note that $V_n^\perp=V_n$). Second, $SpF^a_{2n}$ obviously sits inside
$F^a_{\bd}$ for $\bd=(1,\dots,n)$.
\end{rem}

\subsection{Resolution of singularities}\label{resofsing}
Let $R_N$ be the variety of collections $(V_{i,j})_{1\le i\le j\le N-1}$ of subspaces of a vector space $E$ with 
basis $e_1,\dots,e_N$ such that 
\begin{itemize}
\item $\dim V_{i,j}=i$,\ $V_{i,j}\subset {\rm span} (e_1,\dots,e_i,e_{j+1},\dots,e_N)$,
\item $V_{i,j}\subset V_{i+1,j}$,\ $pr_{i+1} V_{i,j}\subset V_{i,j+1}$.
\end{itemize}
Then one can show (see \cite{FFi}) that $R_N$ is a tower of $\bP^1$ fibrations (and thus smooth) of dimension 
$N(N-1)/2$. The map $R_N\to F^a_N$ sending a collection $(V_{i,j})\in R_N$ to $(V_{i,i})_{i=1}^{N-1}$
is a resolution of singularities.

Similar construction works in the symplectic case \cite{FFL}. We fix a symplectic form on a $2n$-dimensional vector
space  $E={\rm span} (e_1,\dots,e_{2n})$ such that $(e_i,e_{2n+1-i})=1$ for $i=1,\dots,n$. 
The symplectic resolution $SpR_{2n}$
consists of collections $(V_{i,j})$, $1\le i\le n$, $i+j\le 2n$ of subspaces of $E$ such that 
\begin{itemize}
\item $\dim V_{i,j}=i$,\ $V_{i,j}\subset {\rm span} (e_1,\dots,e_i,e_{j+1},\dots,e_{2n})$,
\item $V_{i,j}\subset V_{i+1,j}$,\ $pr_{i+1} V_{i,j}\subset V_{i,j+1}$,
\item $V_{i,2n+1-i}$ is Lagrangian in ${\rm span} (e_1,\dots,e_i,e_{2n+1-i},\dots,e_{2n})$.
\end{itemize}
As in type $A$, $SpR_{2n}$ is a resolution of singularities of $SpF^a_{2n}$ via the map 
$(V_{i,j})\mapsto (V_{i,i})_{i=1}^n$.

\section{Degenerate flags: even symplectic case}\label{ESC}

\subsection{Dimension of cells: algorithm}
Let us consider a collection ${\bf I}=(I_1,\dots,I_n)$ consisting of subsets of the set $\{1,\dots,2n\}$
subject to the conditions
\begin{equation}\label{CI}
|I_k|=k,\ I_k\subset I_{k+1}\cup\{k+1\},\ a\in I_n \text{ implies } 2n+1-a\notin I_n.
\end{equation}
Let $\mathcal{I}_{2n}$ be the set of these collections. Each such a collection corresponds to a point $p(\bI)$ 
in the degenerate symplectic flag variety defined by $p(\bI)_k={\rm span}(e_i, i\in I_k)$. 
Recall (see Remark \ref{CtoA}) that $SpF^a_{2n}$ sits inside $F^a_{(1,\dots,n)}$.
In what follows we denote the collection $(1,\dots,n)$ by $\bd$.

\begin{lem}\label{lem:ac}
Assume that a collection $\bI$ satisfies \eqref{CI} (i.e. $\bI\in \mathcal{I}_{2n}$) and let $C_A(\bI) \subset F^a_\bd$ be
the corresponding cell. Then the intersection $C(\bI)=SpF^a_{2n}\cap C_A(\bI)$ is an affine cell. 
\end{lem}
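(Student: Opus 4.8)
The plan is to use the known cell structure of the type $A$ partial flag variety $F^a_{\bd}$ and to show that intersecting with the symplectic (Lagrangian) condition $V_n = V_n^\perp$ cuts out an affine subspace of each type $A$ cell $C_A(\bI)$. First I would recall from \eqref{celldescrA} the explicit parametrization of $C_A(\bI)$: for $k=1,\dots,n$ the space $V_k$ has the basis
\[
e_{i_a} + \sum_{b <_k i_a,\, b\notin I_k} x_{a,b}\, e_b,\qquad a=1,\dots,k,\ i_a\in I_k,
\]
with the $x_{a,b}\in\bC$ free subject only to the linear compatibility constraints coming from $pr_{k+1}V_k\subset V_{k+1}$. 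So $C_A(\bI)$ is an affine space with coordinates a subset of the $x_{a,b}$, and on it the subspaces $V_1,\dots,V_{n-1}$ are already completely determined by $V_n$ (for $\bI\in\mathcal I_{2n}$ the flag is reconstructed from its top space by applying the $pr$'s), so the only remaining condition defining $C(\bI)=SpF^a_{2n}\cap C_A(\bI)$ inside $C_A(\bI)$ is that $V_n$ be Lagrangian, i.e. $(v,w)=0$ for all $v,w\in V_n$.

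Next I would analyze the Lagrangian condition in these coordinates. Write a basis of $V_n$ as above with $i_1,\dots,i_n\in I_n$ and unknowns $x_{a,b}$. The pairing $(e_p,e_q)=\delta_{p+q,2n+1}$ means
\[
\bigl(v_a,v_c\bigr) \;=\; \sum_{\substack{p\in\{i_a\}\cup\{b: x_{a,b}\neq 0\text{ slot}\}}} (\text{coeff of }e_p\text{ in }v_a)\,(\text{coeff of }e_{2n+1-p}\text{ in }v_c),
\]
which is a quadratic expression in the $x$'s. The key point is a triangularity/dominance argument: order the pivots $i_1,\dots,i_n$ and, using the orderings $>_k$ of \eqref{orderingA}, show that the pivot $i_a$ of $v_a$ pairs nontrivially with at most one pivot among $i_1,\dots,i_n$, and that when one solves the equations $(v_a,v_c)=0$ in the right order, each new equation is \emph{linear} in a fresh variable $x_{a,\,2n+1-i_c}$ (the coefficient of $e_{i_c}$ in $v_a$, or symmetrically) with the leading coefficient a nonzero constant (namely $1$, the pivot coefficient of $v_c$). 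Thus the Lagrangian conditions can be solved recursively, each one eliminating one coordinate linearly, so that the solution set is cut out inside the affine space $C_A(\bI)$ by setting a certain subset of the coordinates equal to polynomials in the remaining ones. This exhibits $C(\bI)$ as a graph over an affine coordinate subspace, hence an affine cell.

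The main obstacle I anticipate is the bookkeeping needed to make the triangularity argument precise: one must choose the right order in which to process the pairs $(a,c)$, verify that the condition $a\in I_n\Rightarrow 2n+1-a\notin I_n$ guarantees the ``fresh variable'' at each step genuinely is an admissible free coordinate of $C_A(\bI)$ (i.e. that the slot $(a, 2n+1-i_c)$ satisfies $2n+1-i_c <_n i_a$ and $2n+1-i_c\notin I_n$, so it was not already constrained by a previous step or by the $pr$-compatibility relations), and rule out the degenerate case where an equation becomes $0=0$ or forces an inconsistency — which cannot happen precisely because $\bI\in\mathcal I_{2n}$ so that $p(\bI)$ is an actual point of $SpF^a_{2n}$ lying in $C(\bI)$, guaranteeing the solution set is nonempty. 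Once the elimination order is fixed, each step is a routine linear algebra computation, and a clean way to package the result is to observe that $C(\bI)$ is stable under the natural $\bC^\times$-action contracting to $p(\bI)$ and is a locally closed subscheme of an affine space cut out by equations solvable for a subset of the coordinates, hence isomorphic to $\bC^{\dim C(\bI)}$.
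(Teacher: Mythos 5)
Your overall plan --- use the coordinates of \eqref{celldescrA} on $C_A(\bI)$, reduce everything to the Lagrangian condition on $V_n$, and show this condition cuts out an affine subspace --- is the same as the paper's. But you treat the Lagrangian condition as a system of quadratic equations and propose to solve it recursively by a triangularity/elimination argument. This misses the paper's key (and much simpler) observation: the quadratic cross-terms vanish identically, so the conditions are genuinely \emph{linear} in the cell coordinates, and you never need an elimination order at all. Concretely, a basis vector of $V_n$ has the form $l_a = e_a + \sum_{b<_n a,\ b\in\bar I_n} x_{a,b}\,e_b$ with $a\in I_n$, and the form pairs $e_p$ with $e_{2n+1-p}$. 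In the expansion of $(l_{a_1},l_{a_2})$ the pivot-pivot term $(e_{a_1},e_{a_2})$ vanishes because $a_1,a_2\in I_n$ and $I_n$ is isotropic with respect to the involution $a\mapsto 2n+1-a$; but \emph{also} every quadratic term $x_{a_1,b}x_{a_2,c}(e_b,e_c)$ vanishes, because $b,c\in\bar I_n$ and $b+c=2n+1$ would force $c=2n+1-b\in I_n$, contradicting $c\in\bar I_n$. The only surviving terms are the mixed pivot-with-coordinate terms, giving exactly the linear equations $x_{a_1,2n+1-a_2}\pm x_{a_2,2n+1-a_1}=0$. Thus $C(\bI)$ is the intersection of the affine space $C_A(\bI)$ with a linear subspace, hence an affine space --- no bookkeeping about ``fresh variables'' or elimination order is needed, and the nonemptiness argument via $p(\bI)$ is superfluous.

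Your proposal, as written, also has a small logical wobble that the linear observation dissolves: you appeal to nonemptiness of $C(\bI)$ to rule out inconsistencies in a recursive solve. Nonemptiness of a variety does not in general prevent a chosen elimination scheme from producing a $0=0$ identity or a dependence on lower strata; it would take more care to justify that the quadratic system (as you model it) cuts out a subvariety isomorphic to affine space rather than some other unirational locally closed subset. Once you note the equations are linear, this issue evaporates: a linear slice of an affine space is an affine space, full stop. I would suggest revising your argument to include the vanishing of $(e_b,e_c)$ for $b,c\in\bar I_n$; everything else then collapses to one sentence.
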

\begin{proof}
Let $\bI=(I_1,\dots,I_n)$. Let $\bar I_n$ be the complement to $I_n$ inside the set $\{1,\dots,2n\}$. In particular,
$\bar I_n$ contains $n$ elements and $a\in I_n$ if and only if $2n+1-i\in \bar I_n$. Let $C_A(\bI)$ be the cell 
inside $F^a_\bd$ corresponding to the collection $\bI$. In what follows we use the notation 
\begin{equation}
C(\bI)=SpF^a_{2n}\cap C_A(\bI).
\end{equation} 
We note that $C(\bI)$ consists of points $(V_1,\dots,V_n)\in C_A(\bI)$ such that  $V_n$ is Lagrangian.
 
Recall the ordering $<_n$ on the set $\{1,\dots,2n\}$ (see \eqref{orderingA}). Then $V_n$ 
shows up as the last component of a point in $C_A(\bI)$ if and only if $V_n$ is the linear span of the vectors
\[
l_a=e_a+\sum_{b<_n a, b\in\bar I_n} x_{a,b} e_b,\ a\in I_n. 
\]   
We note that the coordinates $x_{a,b}$ form a part of the coordinate system on the affine cell $C_A(\bI)$.
Now $V_n$ is Lagrangian if and only if $(l_{a_1}, l_{a_2})=0$ for all $a,b\in I_n$, which is equivalent to 
the system of linear conditions labeled by all pairs $a_1,a_2\in I_n$ such that $a_1<_n a_2$:
\begin{equation}
x_{a_1,2n+1-a_2}\pm x_{a_2,2n+1-a_1}=0,\ 2n+1-a_1<_n a_2
\end{equation}
(recall that our symplectic form pairs nontrivially $e_a$ and $e_{2n+1-a}$). Hence  $C(\bI)$ is cut out inside 
$C_A(\bI)$ by linear equations and hence is isomorphic to an affine space.
\end{proof}

The proof above allows to compare the dimensions of $C_A(\bI)$ and that of $C(\bI)$. 
\begin{cor}\label{comparedimensions}
Let $\bI\in \mathcal{I}_{2n}$. Then
\[
\dim C_A(\bI) - \dim C(\bI) = |\{(a_1,a_2)\in I_n:\ a_1<_n a_2, 2n+1-a_1<_n a_2\}|.
\]
\end{cor}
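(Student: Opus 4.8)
The plan is to read off both dimensions from the explicit coordinate description of the cells and take the difference. Recall from the proof of Lemma \ref{lem:ac} that the last component $V_n$ of a point in $C_A(\bI)$ is spanned by vectors $l_a = e_a + \sum_{b<_n a,\ b\in\bar I_n} x_{a,b}e_b$ for $a\in I_n$, and that these $x_{a,b}$ form part of a coordinate system on $C_A(\bI)$; the remaining coordinates come from the components $V_1,\dots,V_{n-1}$ together with the constraints $pr_{k+1}V_k\subset V_{k+1}$, and these are untouched by the Lagrangian condition. So the only thing that changes when passing from $C_A(\bI)$ to $C(\bI)$ is the number of free parameters among the $x_{a,b}$ with $a\in I_n$. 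Thus $\dim C_A(\bI)-\dim C(\bI)$ equals the number of independent linear equations cutting out $C(\bI)$ inside $C_A(\bI)$, which by the proof of Lemma \ref{lem:ac} are the relations $x_{a_1,2n+1-a_2}\pm x_{a_2,2n+1-a_1}=0$ indexed by pairs $a_1<_n a_2$ in $I_n$ with $2n+1-a_1<_n a_2$.

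So the whole content is to check that these linear relations are \emph{independent}, so that the count of equations equals the drop in dimension. First I would observe that each such relation involves only variables of the form $x_{a,2n+1-a'}$ with $a,a'\in I_n$; in fact each relation $x_{a_1,2n+1-a_2}\pm x_{a_2,2n+1-a_1}=0$ pairs up the two "symplectic" coordinates attached to the unordered pair $\{a_1,a_2\}$. I would argue that distinct admissible pairs $\{a_1,a_2\}$ give relations in disjoint sets of variables: if $\{a_1,a_2\}\ne\{a_1',a_2'\}$ then $\{2n+1-a_2,2n+1-a_1\}\cap\{2n+1-a_2',2n+1-a_1'\}$, intersected with the column indices actually appearing, is empty — here one uses the defining condition on $\bI$ that $a\in I_n\Rightarrow 2n+1-a\notin I_n$, so $2n+1-a_2\in\bar I_n$ and the coordinate $x_{a_1,2n+1-a_2}$ genuinely exists, while each variable $x_{a,b}$ has a uniquely determined pair $\{a, 2n+1-b\}\subset I_n$. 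Hence the system is block-diagonal with one nontrivial equation per block, so the equations are linearly independent and the rank of the system is exactly the number of admissible pairs.

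The remaining subtlety — and the step I expect to require the most care — is bookkeeping around the ordering $<_n$ and the edge case $2n+1-a_1 = a_2$ (equivalently $a_1+a_2 = 2n+1$), which is excluded precisely because $a\in I_n$ forbids $2n+1-a\in I_n$, so in fact $a_1+a_2\neq 2n+1$ always holds for pairs inside $I_n$; this guarantees the two variables $x_{a_1,2n+1-a_2}$ and $x_{a_2,2n+1-a_1}$ are distinct and the relation is genuinely a codimension-one condition rather than $x=\pm x$. One also has to confirm that the condition "$2n+1-a_1 <_n a_2$" is exactly the condition for the variable $x_{a_2,2n+1-a_1}$ to exist (i.e.\ $2n+1-a_1 <_n a_2$ with $2n+1-a_1\in\bar I_n$), matching the indexing in the statement; combined with $a_1<_n a_2$ this is symmetric enough that by possibly swapping $a_1,a_2$ one gets at most one relation per unordered pair, as claimed. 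Putting these observations together yields
\[
\dim C_A(\bI)-\dim C(\bI)=\bigl|\{(a_1,a_2)\in I_n:\ a_1<_n a_2,\ 2n+1-a_1<_n a_2\}\bigr|,
\]
which is the assertion.
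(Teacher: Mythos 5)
Your proof is correct and takes essentially the same approach as the paper, which states the corollary as an immediate consequence of the explicit linear equations found in the proof of Lemma \ref{lem:ac}. You make explicit the independence-of-equations step (disjointness of the coordinate pairs $x_{a_1,2n+1-a_2}$, $x_{a_2,2n+1-a_1}$ across distinct admissible pairs $\{a_1,a_2\}$, using $a\in I_n\Rightarrow 2n+1-a\notin I_n$) that the paper leaves implicit.
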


Now let us give an explicit (inductive) way to compute the dimension $d(\bI)$ of the cell $C(\bI)$.
We will use $n$ different orderings $>_k$, $k=1,\dots,n$ on the set $\{1,\dots,2n\}$ (see \eqref{orderingA}).
We represent $\bI$ graphically as follows (which reflects the latter orderings)~: first we draw a parallelogram made 
of $2n \times n$ horizontally linked white disks as depicted in Figure \ref{fig:skeletton};

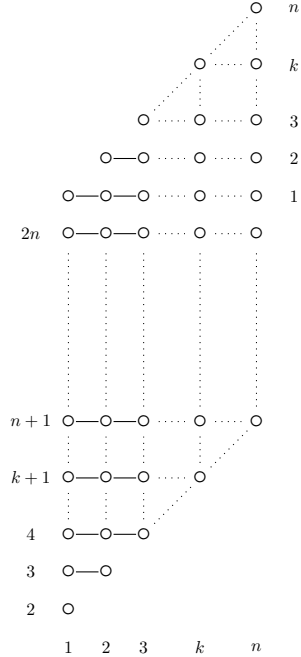
\begin{figure}[h]
\begin{tikzpicture}[scale=0.5]

\draw (1,2) node[scale=1]{$\circ$};
\draw (1,3) node[scale=1]{$\circ$};
\draw (1,4) node[scale=1]{$\circ$};
\draw [dotted] (1,4) [shorten <= 0.2cm, shorten >= 0.2cm] -- (1,5.5);
\draw (1,5.5) node[scale=1]{$\circ$};
\draw [dotted] (1,5.5) [shorten <= 0.2cm, shorten >= 0.2cm] -- (1,7);
\draw (1,7) node[scale=1]{$\circ$};
\draw [dotted] (1,7) [shorten <= 0.2cm, shorten >= 0.2cm] -- (1,12);
\draw (1,12) node[scale=1]{$\circ$};
\draw (1,13) node[scale=1]{$\circ$};

\draw (2,3) node[scale=1]{$\circ$};
\draw (2,4) node[scale=1]{$\circ$};
\draw [dotted] (2,4) [shorten <= 0.2cm, shorten >= 0.2cm] -- (2,5.5);
\draw (2,5.5) node[scale=1]{$\circ$};
\draw [dotted] (2,5.5) [shorten <= 0.2cm, shorten >= 0.2cm] -- (2,7);
\draw (2,7) node[scale=1]{$\circ$};
\draw [dotted] (2,7) [shorten <= 0.2cm, shorten >= 0.2cm] -- (2,12);
\draw (2,12) node[scale=1]{$\circ$};
\draw (2,13) node[scale=1]{$\circ$};
\draw (2,14) node[scale=1]{$\circ$};

\draw (3,4) node[scale=1]{$\circ$};
\draw [dotted] (3,4) [shorten <= 0.2cm, shorten >= 0.2cm] -- (3,5.5);
\draw (3,5.5) node[scale=1]{$\circ$};
\draw [dotted] (3,5.5) [shorten <= 0.2cm, shorten >= 0.2cm] -- (3,7);
\draw (3,7) node[scale=1]{$\circ$};
\draw [dotted] (3,7) [shorten <= 0.2cm, shorten >= 0.2cm] -- (3,12);
\draw (3,12) node[scale=1]{$\circ$};
\draw (3,13) node[scale=1]{$\circ$};
\draw (3,14) node[scale=1]{$\circ$};
\draw (3,15) node[scale=1]{$\circ$};

\draw (4.5,5.5) node[scale=1]{$\circ$};
\draw [dotted] (4.5,7) [shorten <= 0.2cm, shorten >= 0.2cm] -- (4.5,12);
\draw (4.5,7) node[scale=1]{$\circ$};
\draw (4.5,12) node[scale=1]{$\circ$};
\draw (4.5,13) node[scale=1]{$\circ$};
\draw (4.5,14) node[scale=1]{$\circ$};
\draw (4.5,15) node[scale=1]{$\circ$};
\draw (4.5,16.5) node[scale=1]{$\circ$};

\draw (6,7) node[scale=1]{$\circ$};
\draw [dotted] (6,7) [shorten <= 0.2cm, shorten >= 0.2cm] -- (6,12);
\draw (6,12) node[scale=1]{$\circ$};
\draw (6,13) node[scale=1]{$\circ$};
\draw (6,14) node[scale=1]{$\circ$};
\draw (6,15) node[scale=1]{$\circ$};
\draw (6,16.5) node[scale=1]{$\circ$};
\draw (6,18) node[scale=1]{$\circ$};

\draw [dotted] (3,4) [shorten <= 0.2cm, shorten >= 0.15cm] -- (4.5,5.5);
\draw [dotted] (4.5,5.5) [shorten <= 0.2cm, shorten >= 0.15cm] -- (6,7);
\draw [dotted] (4.5,5.5) [shorten <= 0.2cm, shorten >= 0.15cm] -- (4.5,7);
\draw [dotted] (3,15) [shorten <= 0.2cm, shorten >= 0.15cm] -- (4.5,16.5);
\draw [dotted] (4.5,16.5) [shorten <= 0.2cm, shorten >= 0.15cm] -- (6,18);
\draw [dotted] (4.5,15) [shorten <= 0.2cm, shorten >= 0.2cm] -- (4.5,16.5);
\draw [dotted] (3,7) [shorten <= 0.2cm, shorten >= 0.2cm] -- (4.5,7);
\draw [dotted] (3,5.5) [shorten <= 0.2cm, shorten >= 0.2cm] -- (4.5,5.5);
\draw [dotted] (4.5,7) [shorten <= 0.2cm, shorten >= 0.2cm] -- (6,7);
\draw [dotted] (3,12) [shorten <= 0.2cm, shorten >= 0.2cm] -- (4.5,12);
\draw [dotted] (3,13) [shorten <= 0.2cm, shorten >= 0.2cm] -- (4.5,13);
\draw [dotted] (3,14) [shorten <= 0.2cm, shorten >= 0.2cm] -- (4.5,14);
\draw [dotted] (3,15) [shorten <= 0.2cm, shorten >= 0.2cm] -- (4.5,15);
\draw [dotted] (4.5,12) [shorten <= 0.2cm, shorten >= 0.2cm] -- (6,12);
\draw [dotted] (4.5,13) [shorten <= 0.2cm, shorten >= 0.2cm] -- (6,13);
\draw [dotted] (4.5,14) [shorten <= 0.2cm, shorten >= 0.2cm] -- (6,14);
\draw [dotted] (4.5,15) [shorten <= 0.2cm, shorten >= 0.2cm] -- (6,15);
\draw [dotted] (4.5,16.5) [shorten <= 0.2cm, shorten >= 0.2cm] -- (6,16.5);
\draw [dotted] (6,15) [shorten <= 0.2cm, shorten >= 0.2cm] -- (6,16.5);
\draw [dotted] (6,16.5) [shorten <= 0.2cm, shorten >= 0.2cm] -- (6,18);

\draw (1,3) [shorten <= 0.1cm, shorten >= 0.1cm] -- (2,3);
\draw (1,4) [shorten <= 0.1cm, shorten >= 0.1cm] -- (2,4);
\draw (2,4) [shorten <= 0.1cm, shorten >= 0.1cm] -- (3,4);
\draw (1,5.5) [shorten <= 0.1cm, shorten >= 0.1cm] -- (2,5.5);
\draw (2,5.5) [shorten <= 0.1cm, shorten >= 0.1cm] -- (3,5.5);
\draw (1,7) [shorten <= 0.1cm, shorten >= 0.1cm] -- (2,7);
\draw (2,7) [shorten <= 0.1cm, shorten >= 0.1cm] -- (3,7);
\draw (1,12) [shorten <= 0.1cm, shorten >= 0.1cm] -- (2,12);
\draw (2,12) [shorten <= 0.1cm, shorten >= 0.1cm] -- (3,12);
\draw (1,13) [shorten <= 0.1cm, shorten >= 0.1cm] -- (2,13);
\draw (2,13) [shorten <= 0.1cm, shorten >= 0.1cm] -- (3,13);
\draw (2,14) [shorten <= 0.1cm, shorten >= 0.1cm] -- (3,14);

\draw (0,2) node[scale=0.6]{$2$};
\draw (0,3) node[scale=0.6]{$3$};
\draw (0,4) node[scale=0.6]{$4$};
\draw (0,5.5) node[scale=0.6]{$k+1$};
\draw (0,7) node[scale=0.6]{$n+1$};
\draw (0,12) node[scale=0.6]{$2n$};
\draw (7,13) node[scale=0.6]{$1$};
\draw (7,14) node[scale=0.6]{$2$};
\draw (7,15) node[scale=0.6]{$3$};
\draw (7,16.5) node[scale=0.6]{$k$};
\draw (7,18) node[scale=0.6]{$n$};

\draw (1,1) node[scale=0.6]{$1$};
\draw (2,1) node[scale=0.6]{$2$};
\draw (3,1) node[scale=0.6]{$3$};
\draw (4.5,1) node[scale=0.6]{$k$};
\draw (6,1) node[scale=0.6]{$n$};

\end{tikzpicture}
\caption{Skeletton of a graphical representation $\mathcal{P}(\bI)$.}
\label{fig:skeletton}
\end{figure}
if a disk $D$ is located in the $k$-th column (from left to right) and in the row labeled with the integer 
$l \in \{k+1,\hdots,2n,1,\hdots,k\}$, we say that $D = [k:l]$. We then define $\mathcal{P}(\bI)$ as this parallelogram in which 
every disk $[k:l]$ with $l \in I_k$ is painted in black.

For example, consider the collection $$\bI_0 = (\{3\},\{3,7\},\{1,4,7\},\{1,4,6,7\}) \in \mathcal{I}_8,$$
then $\mathcal{P}(\bI_0)$ is as depicted in Figure \ref{fig:PI0}.

\begin{figure}[h]

\begin{tikzpicture}[scale=0.5]

\draw (1,2) node[scale=1]{$\circ$};
\draw (1,3) node[scale=1]{$\bullet$};
\draw (1,4) node[scale=1]{$\circ$};
\draw (1,5) node[scale=1]{$\circ$};
\draw (1,6) node[scale=1]{$\circ$};
\draw (1,7) node[scale=1]{$\circ$};
\draw (1,8) node[scale=1]{$\circ$};
\draw (1,9) node[scale=1]{$\circ$};

\draw (2,3) node[scale=1]{$\bullet$};
\draw (2,4) node[scale=1]{$\circ$};
\draw (2,5) node[scale=1]{$\circ$};
\draw (2,6) node[scale=1]{$\circ$};
\draw (2,7) node[scale=1]{$\bullet$};
\draw (2,8) node[scale=1]{$\circ$};
\draw (2,9) node[scale=1]{$\circ$};
\draw (2,10) node[scale=1]{$\circ$};

\draw (3,4) node[scale=1]{$\bullet$};
\draw (3,5) node[scale=1]{$\circ$};
\draw (3,6) node[scale=1]{$\circ$};
\draw (3,7) node[scale=1]{$\bullet$};
\draw (3,8) node[scale=1]{$\circ$};
\draw (3,9) node[scale=1]{$\bullet$};
\draw (3,10) node[scale=1]{$\circ$};
\draw (3,11) node[scale=1]{$\circ$};

\draw (4,5) node[scale=1]{$\circ$};
\draw (4,6) node[scale=1]{$\bullet$};
\draw (4,7) node[scale=1]{$\bullet$};
\draw (4,8) node[scale=1]{$\circ$};
\draw (4,9) node[scale=1]{$\bullet$};
\draw (4,10) node[scale=1]{$\circ$};
\draw (4,11) node[scale=1]{$\circ$};
\draw (4,12) node[scale=1]{$\bullet$};

\draw (1,3) [shorten <= 0.1cm, shorten >= 0.1cm] -- (2,3);
\draw (1,4) [shorten <= 0.1cm, shorten >= 0.1cm] -- (2,4);
\draw (2,4) [shorten <= 0.1cm, shorten >= 0.1cm] -- (3,4);
\draw (1,5) [shorten <= 0.1cm, shorten >= 0.1cm] -- (2,5);
\draw (2,5) [shorten <= 0.1cm, shorten >= 0.1cm] -- (3,5);
\draw (3,5) [shorten <= 0.1cm, shorten >= 0.1cm] -- (4,5);
\draw (1,6) [shorten <= 0.1cm, shorten >= 0.1cm] -- (2,6);
\draw (2,6) [shorten <= 0.1cm, shorten >= 0.1cm] -- (3,6);
\draw (3,6) [shorten <= 0.1cm, shorten >= 0.1cm] -- (4,6);
\draw (1,7) [shorten <= 0.1cm, shorten >= 0.1cm] -- (2,7);
\draw (2,7) [shorten <= 0.1cm, shorten >= 0.1cm] -- (3,7);
\draw (3,7) [shorten <= 0.1cm, shorten >= 0.1cm] -- (4,7);
\draw (1,8) [shorten <= 0.1cm, shorten >= 0.1cm] -- (2,8);
\draw (2,8) [shorten <= 0.1cm, shorten >= 0.1cm] -- (3,8);
\draw (3,8) [shorten <= 0.1cm, shorten >= 0.1cm] -- (4,8);
\draw (1,9) [shorten <= 0.1cm, shorten >= 0.1cm] -- (2,9);
\draw (2,9) [shorten <= 0.1cm, shorten >= 0.1cm] -- (3,9);
\draw (3,9) [shorten <= 0.1cm, shorten >= 0.1cm] -- (4,9);
\draw (2,10) [shorten <= 0.1cm, shorten >= 0.1cm] -- (3,10);
\draw (3,10) [shorten <= 0.1cm, shorten >= 0.1cm] -- (4,10);
\draw (3,11) [shorten <= 0.1cm, shorten >= 0.1cm] -- (4,11);

\draw (0,2) node[scale=0.6]{$2$};
\draw (0,3) node[scale=0.6]{$3$};
\draw (0,4) node[scale=0.6]{$4$};
\draw (0,5) node[scale=0.6]{$5$};
\draw (0,6) node[scale=0.6]{$6$};
\draw (0,7) node[scale=0.6]{$7$};
\draw (0,8) node[scale=0.6]{$8$};

\draw (5,9) node[scale=0.6]{$1$};
\draw (5,10) node[scale=0.6]{$2$};
\draw (5,11) node[scale=0.6]{$3$};
\draw (5,12) node[scale=0.6]{$4$};

\draw (1,1) node[scale=0.6]{$1$};
\draw (2,1) node[scale=0.6]{$2$};
\draw (3,1) node[scale=0.6]{$3$};
\draw (4,1) node[scale=0.6]{$4$};

\end{tikzpicture}
\caption{Graph $\mathcal{P}(\bI_0)$.}
\label{fig:PI0}
\end{figure}
Note that in general, for all $k \in \{1,2,\hdots,n\}$, the $k$-th column $C_k(\bI)$ (from left to right) of $\mathcal{P}(\bI)$ contains
 $|I_k| = k$ black disks, and that if a disk is black, then all the disks located on the right of it in the same row are also black. 
When a black disk is the first (from left to right) of its row, we say it is an \textit{initiating disk}. Let $ID_k(\bI)$ be the set 
of initiating disks of $C_k(\bI)$. For all $D = [k:l] \in ID_k(\bI)$, the number of white disks of $C_k(\bI)$ located under $D$ is 
denoted by $w(D) = w(k:l)$ ; also, the set of black disks of $C_k(\bI)$ located under $D$ is denoted by $B(D)$.

Now, for $1\le k\le n$ and $a,b\in\{1,\dots,2n\}$, $a+b\ne 2n+1$ we define $s(a,b,k)$ as follows:
\[
s(a,b,k)=
\begin{cases}
1, & a,b\ge_k 2n-k+1 \text{ and } 2n+1-b<_k a,\\
0, & \text{ otherwise}.
\end{cases}
\]
\begin{rem}
$s(a,b,k)$ is either one or zero. It is equal to $1$ if both $a$ and $b$ are from the set 
$\{k,k-1,\dots,1,2n,2n-1,\dots,2n-k+1\}$ and $2n+1-b<_k a$ (note that if $b\ge_k2n-k+1$, then
$2n+1-b\ge_k 2n-k+1$). 
\end{rem}

\begin{rem}
Corollary \ref{comparedimensions} can be rephrased in terms of the function $s$ :
let $d_A(\bI)=\dim C_A(\bI)$, then 
$d(\bI)=d_A(\bI)-\sum_{1\le a<_n b\le n} s(a,b,n)$.
\end{rem}

Now let us describe the inductive algorithm for computing the dimensions $d(\bI)=\dim C(\bI)$.
We do it in $n$ steps defining numbers $d_1\le \dots \le d_n=d(\bI)$ adding a nonnegative integer at each step.

We start with $d_1$. Let $I_1=\{i\}$. We define $d_1$ as the number of elements $x\in\{1,\dots,2n\}$ such that 
$x<_1 i$.

Now let us define $d_2$. Let $I_1=\{i\}$. We consider two cases. First, assume $2\notin I_1$. Let $I_2\setminus I_1=\{j\}$.  
We define
\[
d_2=d_1+|\{x: x<_2 j, x\notin I_1\}| - s(j,i,2).
\] 
Second, let $2\in I_1$. Let $I_2=\{j_1,j_2\}$, $j_1>_2 j_2$. Then
\[
d_2=d_1+|\{x: x<_2 j_2\}|+ |\{x: x<_2 j_1\}|-1- s(j_1,j_2,2).
\] 

Now assume $d_{k-1}$ is already defined. Let us define $d_k$.  
We consider two cases. First, assume $k\notin I_{k-1}$. Let $I_k\setminus I_{k-1}=\{j\}$.  
We define
\[
d_k=d_{k-1}+|\{x: x<_k j, x\notin I_{k-1}\}| - \sum_{i\in I_{k-1}} s(j,i,k).
\] 
Second, let $k\in I_{k-1}$ and let $I_{k-1}'=I_{k-1}\setminus\{k\}$.  
Let $I_k\setminus I'_{k-1}=\{j_1,j_2\}$, $j_1>_k j_2$. Then
\begin{multline*}
d_k=d_{k-1}+|\{x: x<_k j_2, x\notin I_{k-1}'\}|+ |\{x: x<_k j_1, x\notin I_{k-1}'\}|-1\\
-s(j_1,j_2,k)- \sum_{i\in I_{k-1}'}  (s(j_1,i,k)+s(j_2,i,k)).
\end{multline*}

In terms of the graphical representation $\mathcal{P}(\bI)$, we obtain
\begin{equation}
\label{eq:inductiondk}
d_k = d_{k-1} + \sum_{D=[k:l] \in ID_k(\bI)} \left( w(D) - \sum_{[k:l'] \in B(D)} s(l,l',k)\right)
\end{equation}
for all $k \in \{1,\hdots,n\}$, where $d_0$ is defined as $0$. For the example $\bI = \bI_0 \in \mathcal{I}_8$ of Figure \ref{fig:PI0}, we obtain~:
\begin{itemize}
\item $ID_1(\bI_0) = \{[1:3]\}$ and $d_1 = w(1:3) = 1$;
\item $ID_2(\bI_0) = \{[2:7]\}$ and $$d_2 = d_1 + (w(2:7) - s(7,3)) = 1 + (3 -0) = 4;$$
\item $ID_3(\bI_0) = \{[3:4],[3:1]\}$ and \begin{align*}d_3 &= d_2 + w(3:4) + (w(3:1)-s(1,7)-s(1,4)) \\ &= 4 + 0 + (3-0-0) = 7;\end{align*}
\item $ID_4(\bI_0) = \{[4:6],[4:4]\}$ and \begin{align*}d_4 &= d_3 + w(4:6) + (w(4:4)-s(4,1)-s(4,7)-s(4,6))\\
 & = 7 + 1 + (4-1-1-1) = 9.\end{align*}
\end{itemize}

\begin{thm}\label{thm:algorithm}
For all $\bI\in \mathcal{I}_{2n}$ the number
$d_n$ is equal to the dimension of $C(\bI)$. 
\end{thm}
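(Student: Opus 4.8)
The plan is to reduce everything to the already-established description of the type $A$ cells from \eqref{celldescrA}--\eqref{celldimA} together with the linear-equation count from Lemma \ref{lem:ac} and Corollary \ref{comparedimensions}, and then to check that the inductive formula \eqref{eq:inductiondk} reassembles exactly these two ingredients. More precisely, I would first observe that $d_A(\bI)=\dim C_A(\bI)$ is computed by the partial-flag version of \eqref{celldimA}, namely a sum over the initiating elements of the columns $I_1,\dots,I_n$ of the number of non-members of $I_k$ below the initiating element in the ordering $<_k$. In the graphical language of $\mathcal{P}(\bI)$ this is precisely $\sum_{k=1}^n\sum_{D=[k:l]\in ID_k(\bI)} w(D)$, because an element $i\in I_k$ is initiating exactly when its disk $[k:i]$ is the leftmost black disk of its row, and $w(D)$ counts the white disks of column $k$ below $D$, i.e. the $x\in\{1,\dots,2n\}\setminus I_k$ with $x<_k i$. (One should double-check the edge case $i=k$: a disk $[k:k]$ sits at the bottom of its column in the ordering $>_k$, so $w(k:k)=0$, matching the contribution of the element $k$ which is initiating by definition but has nothing below it.)

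Next I would invoke Corollary \ref{comparedimensions} in the form of the Remark following it: $d(\bI)=d_A(\bI)-\sum_{1\le a<_n b\le n} s(a,b,n)$. So the theorem amounts to showing that the correction terms produced step by step in \eqref{eq:inductiondk}, namely $\sum_{k=1}^n\sum_{D=[k:l]\in ID_k(\bI)}\sum_{[k:l']\in B(D)} s(l,l',k)$, telescope to exactly $\sum_{1\le a<_n b\le n} s(a,b,n)$. The key combinatorial lemma is therefore: for each pair $(a,b)$ with $1\le a<_n b\le n$, the quantity $s(a,b,n)$ is ``the same'' as the quantity $s(l,l',k)$ that appears when the pair of black disks in some column $k$ first becomes an initiating-disk/below-disk pair. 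Here the crucial point is that $s(a,b,k)$ depends only on whether $a,b$ and $2n+1-a,2n+1-b$ lie in the ``top band'' $\{2n-k+1,\dots,2n,1,\dots,k\}$ and on the relative $<_k$-order, and this top-band membership is monotone in $k$; once $a$ and $b$ both lie in the band for column $k_0$ they lie in it for all larger $k$, and the relative order stabilizes. One then checks that in the inductive process the pair $(a,b)$ contributes its correction $s(a,b,k)$ exactly once — at the first column $k$ in which both $a$ and $b$ are present as black disks and the one that is $<_k$-smaller is initiating in that column — and that $s(a,b,k)=s(a,b,n)$ at that moment and thereafter, so no double counting and no omission occurs.

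Concretely I would organize the induction on $k$: assume inductively that $d_{k-1}$ equals $d_A(\bI^{(k-1)})-\sum s$ summed over pairs from the first $k-1$ columns, where $\bI^{(k-1)}=(I_1,\dots,I_{k-1})$ is the truncated collection, using that the type $A$ partial-flag cell dimension is additive over the columns. Passing from $k-1$ to $k$ adds to $d_A$ the term $\sum_{D\in ID_k(\bI)} w(D)$ (the new initiating disks of column $k$, since non-initiating black disks of column $k$ contribute nothing new — their row already had a black disk further left, in column $<k$, so no additional white disks below them are counted), and the formula \eqref{eq:inductiondk} subtracts $\sum_{D=[k:l]\in ID_k(\bI)}\sum_{[k:l']\in B(D)} s(l,l',k)$. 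I must then verify that this subtracted quantity is precisely the sum of $s(a,b,n)$ over the pairs $(a,b)$, $a<_n b$, $a,b\in I_n$, that are ``newly activated'' at step $k$, i.e. such that $\min_{<_k}(a,b)$ is the initiating disk of column $k$ and both $a,b\in I_k$ but this was not the case in column $k-1$. The bookkeeping identity $B([k:l]) = \{[k:l']: l'\in I_k, l'>_k l\}$ and the definition of $ID_k$ make the matching transparent.

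The main obstacle — and the place where real care is needed — is the claim $s(a,b,k)=s(a,b,n)$ at the activation column and the verification that each pair is activated exactly once. The subtlety is that when a black disk first appears in some column $k$, the smaller element of the pair need not yet be an initiating disk (it could already have been black in column $k-1$ while the larger element is new), so the pair can be activated in a column where \emph{neither} element is new; one must carefully argue that $s(l,l',k)$ is nonzero only when both $l,l'$ are in the top band, which forces $k$ to be large enough (at least $\min(2n+1-\text{something})$) that the band membership and $<_k$-order agree with those for $k=n$ — the ordering $>_k$ restricted to the top band $\{k,\dots,1,2n,\dots,2n-k+1\}$ is compatible with $>_n$ on the same set. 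I expect the proof to spend most of its effort on this compatibility-of-orderings-on-the-top-band statement and on the claim that the first column containing both disks is also the first column in which the appropriate one of them is initiating (which follows because the nesting condition $I_{k-1}\subset I_k\cup\{k\}$ forces black disks to persist downward-left), so that the telescoping is clean and nothing is over- or under-counted.
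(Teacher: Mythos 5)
Your strategy---reduce to the global formula $d(\bI)=d_A(\bI)-\sum_{a<_n b}s(a,b,n)$ (the Remark after Corollary~\ref{comparedimensions}) and show that the inductive corrections telescope---is a genuinely different route from the paper's proof, which argues directly by induction on $k$ that $\dim t_k C(\bI)=d_k$ by counting new parameters and new linear isotropy constraints at each step. The telescoping idea could in principle work, but as written your argument has two concrete problems.

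First, you take formula~\eqref{eq:inductiondk} as the definition of $d_k$, but \eqref{eq:inductiondk} is not equivalent to the algebraic algorithm (and it is the algebraic algorithm that the theorem is about). The algebraic correction at step $k$ is $\sum_{i\in I_{k-1}}s(j,i,k)$ (resp.\ $s(j_1,j_2,k)+\sum_{i\in I_{k-1}'}(s(j_1,i,k)+s(j_2,i,k))$), i.e.\ it ranges over \emph{all} old elements $i$ paired with the new ones, including old $i$ with $i>_k j$ sitting \emph{above} the new initiating disk in the column. Formula~\eqref{eq:inductiondk} only sums over $D'\in B(D)$, i.e.\ black disks strictly \emph{below} the initiating $D$, and therefore misses those pairs. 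A concrete witness: take $n=3$ and $\bI=(\{3\},\{2,3\},\{1,2,4\})\in\mathcal I_6$. Then $d_2=5$, the algebraic step (second case, $j_1=1$, $j_2=4$, $I_2'=\{2\}$) gives $d_3=5+0+3-1-s(1,4,3)-\big(s(1,2,3)+s(4,2,3)\big)=5+2-1=6$, which is the true $\dim C(\bI)$; but $ID_3=\{[3:1],[3:4]\}$ with $B([3:1])=\{[3:4]\}$, $B([3:4])=\emptyset$, $s(1,4,3)=0$, so \eqref{eq:inductiondk} yields $5+2+0=7$. The discrepancy is exactly the missed term $s(1,2,3)=1$, coming from the old disk $[3:2]$ that sits \emph{above} the new initiating disk $[3:1]$. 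So your ``so the theorem amounts to showing that the correction terms produced step by step in \eqref{eq:inductiondk}\ldots'' sets up the wrong target.

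Second, even granting the correct (algebraic) form of the corrections, the telescoping lemma is asserted rather than proved. The key facts you would need---that $s(a,b,k)=1$ forces both $a,b$ into the $k$-band and hence into every later $I_{k'}$ including $I_n$, that the $<_{k}$-order and band membership stabilize, and that each ordered pair is therefore counted with $s=1$ at exactly one step---are plausible and you correctly identify them as the main obstacle, but you do not establish them; the worry you raise about a pair being ``activated'' more than once (e.g.\ because $a=k$ re-triggers initiation by fiat even though $a\in I_{k-1}$) is real and needs the explicit observation that $s(a,b,k')=0$ for all $k'<\max(a,b)$ (when $a,b\le n$) to rule out double counting. Also, the condition you describe (``the one that is $<_k$-smaller is initiating'') is backwards relative to \eqref{eq:inductiondk}, where the initiating disk $D$ is $<_k$-\emph{above} the disks of $B(D)$. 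In short: promising plan, but the central combinatorial identity is not proved, and the formula you anchor it to is not the one the theorem is actually about.
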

\begin{proof}
Let $C_A(\bI)\subset F^a_\bd$ be the cell corresponding to 
$\bI$. Let $t_k: F^a_\bd\to\prod_{i=1}^k {\rm Gr}(k,2n)$ be the projecton to the first $k$ components, i..e.
\[
t_k(V_1,\dots,V_n)=(V_1,\dots,V_k).
\]  
By construction (see \eqref{celldescrA}) the image $t_k C_A(\bI)$ is an affine space. 
We prove by induction on $k$ 
that for any $1\le k\le n$ one has $\dim t_k C(\bI)=d_k$ (recall $C_A(\bI)\supset C(\bI)$). Note that the 
$k=n$ case is exactly the claim of our theorem.

The base of induction $k=1$ is clear, since $t_1 C_A(\bI)=t_1 C(\bI)$ is an affine space of dimension $d_1$.
Now assume the claim is proved for some $k-1<n$. We consider two cases.
First, assume $k\notin I_{k-1}$. Let $I_k\setminus I_{k-1}=\{j\}$.  
Then 
\[
\dim t_k C_A(\bI)=\dim t_{k-1} C_A(\bI) + |\{b: b<_k j, b\notin I_{k-1}\}|,
\] 
since $\dim pr_k V_{k-1}=k-1$ and in order to determine $V_k$ we need to add one more vector of the form
\[
l=e_j+\sum_{b<_k j, b\notin I_k} x_{j,b} e_b.
\]
Now  in order to ensure that the resulting point $(V_1,\dots,V_k)$ belongs to $t_k C_A(\bI)$
we need to impose $\sum_{i\in I_{k-1}} s(j,i,k)$ linear relations on the coefficients $x_{j,b}$.
Indeed $f_kV_k$ is Lagrangian in $W_k$ if and only if the vector $f_kl$ is orthogonal to the space $f_k pr_k V_{k-1}$.
Recall that $V_{k-1}$ has a basis labeled by $i\in I_{k-1}$ of the form \eqref{celldescrA}. For $i \in I_{k-1}$, consider the product
\begin{equation*}
\pi = \left(f_k\left(e_j+\sum_{b<_k j, b\notin I_k} x_{j,b} e_b\right), f_k\left(e_i+\sum_{c<_k i, c\notin I_k} x_{i,c} e_c\right)\right).
\end{equation*}
Since $f_kV_k$ is Lagrangian in $W_k$, we have $\pi = 0$. Let us now expand $\pi$ in the products $(e_b,e_c)$. If $s(j,i,k) = 0$, then all of them equal $0$. Otherwise, we obtain $0 = \pi = x_{j,2n+1-i}(e_{2n+1-i},e_i)+x_{i,2n+1-j}(e_j,e_{2n+1-j})$, hence an equation of the kind $x_{j,2n+1-i} = \pm x_{i,2n+1-j}$. Therefore,
\[
\dim t_k C(\bI)=\dim t_{k-1} C(\bI) + |\{b: b<_k j, b\notin I_{k-1}\}|-\sum_{i\in I_{k-1}} s(j,i,k),
\]
which is equal to $d_k$.
The second case $k\in I_{k-1}$ is proved in the same way, by showing that
and if $I_k\setminus (I_{k-1}\setminus\{k\})=\{j_1,j_2\}$, $j_1>_k j_2$, then 
\[
\dim t_k C_A(\bI)= |\{x: x<_k j_2, x\notin I_{k-1}'\}|+ |\{x: x<_k j_1, x\notin I_{k-1}'\}|-1
\]
and 
\[
\dim t_k C(\bI)= \dim t_k C_A(\bI) -s(j_1,j_2,k)- \sum_{i\in I_{k-1}'}  (s(j_1,i,k)+s(j_2,i,k)).
\]
\end{proof}

\subsection{Symmetric Dellac configurations}

Recall the set $SDC_N$ of symmetric Dellac configurations (these are the  elements of $S\in DC_N$ such that $RS=S$,
where $R$ is the central reflection). For $S \in SDC_{2n}$, let $INV(S)$ be the set of inversions of $S$.
We say that two inversions $(p_1,p_1')$ and $(p_2,p_2')$ are equivalent, if $R(\{p_1,p_1'\}) = (\{p_2,p_2'\})$.

\begin{dfn}
\label{def:sim}
Let $\widetilde{INV}(S)$ be the set of equivalence classes of inversions of $S$ and let
$\widetilde{inv}(S)$ be the number of elements in $\widetilde{INV}(S)$.
\end{dfn}

For example, in Figure \ref{fig:SpDC4}, we depict the 10 elements $S \in SDC_4$, each of which has its inversions represented by segments. Equivalent inversions are drawed in a same color (the integer $\widetilde{inv}(S)$ is then the number of different colors used in $S$).
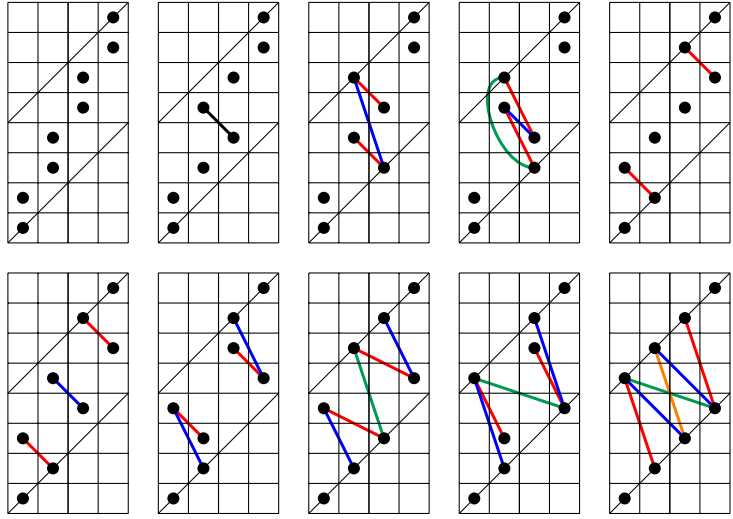
\begin{figure}[!htbp]

\begin{center}

\begin{tikzpicture}[scale=0.4]

\draw (0,0) grid[step=1] (2,8);
\draw (0,0) -- (2,2);
\draw (0,4) -- (2,6);
\fill (0.5,0.5) circle (0.2);
\fill (0.5,1.5) circle (0.2);
\fill (1.5,2.5) circle (0.2);
\fill (1.5,3.5) circle (0.2);
\begin{scope}[shift={(4,8)},rotate=180]
\draw (0,0) grid[step=1] (2,8);
\draw (0,0) -- (2,2);
\draw (0,4) -- (2,6);
\fill (0.5,0.5) circle (0.2);
\fill (0.5,1.5) circle (0.2);
\fill (1.5,2.5) circle (0.2);
\fill (1.5,3.5) circle (0.2);
\end{scope}

\begin{scope}[xshift=5cm]
\draw (0,0) grid[step=1] (2,8);
\draw (0,0) -- (2,2);
\draw (0,4) -- (2,6);
\fill (0.5,0.5) circle (0.2);
\fill (0.5,1.5) circle (0.2);
\fill (1.5,2.5) circle (0.2);
\fill (1.5,4.5) circle (0.2);
\begin{scope}[shift={(4,8)},rotate=180]
\draw (0,0) grid[step=1] (2,8);
\draw (0,0) -- (2,2);
\draw (0,4) -- (2,6);
\fill (0.5,0.5) circle (0.2);
\fill (0.5,1.5) circle (0.2);
\fill (1.5,2.5) circle (0.2);
\fill (1.5,4.5) circle (0.2);

\draw[line width=0.4mm] (1.5,4.5) -- (2.5,3.5);
\end{scope}
\end{scope}

\begin{scope}[xshift=10cm]

\draw[line width=0.4mm,color=red] (1.5,3.5) -- (2.5,2.5);
\draw[line width=0.4mm,color=red] (1.5,5.5) -- (2.5,4.5);
\draw[line width=0.4mm,color=blue] (1.5,5.5) -- (2.5,2.5);

\draw (0,0) grid[step=1] (2,8);
\draw (0,0) -- (2,2);
\draw (0,4) -- (2,6);
\fill (0.5,0.5) circle (0.2);
\fill (0.5,1.5) circle (0.2);
\fill (1.5,3.5) circle (0.2);
\fill (1.5,5.5) circle (0.2);
\begin{scope}[shift={(4,8)},rotate=180]
\draw (0,0) grid[step=1] (2,8);
\draw (0,0) -- (2,2);
\draw (0,4) -- (2,6);
\fill (0.5,0.5) circle (0.2);
\fill (0.5,1.5) circle (0.2);
\fill (1.5,3.5) circle (0.2);
\fill (1.5,5.5) circle (0.2);
\end{scope}
\end{scope}

\begin{scope}[xshift=15cm]
\draw[line width=0.4mm,color=red] (1.5,4.5) -- (2.5,2.5);
\draw[line width=0.4mm,color=red] (1.5,5.5) -- (2.5,3.5);
\draw[line width=0.4mm,color=blue] (1.5,4.5) -- (2.5,3.5);
\draw[line width=0.4mm,color=ForestGreen] (1.5,5.5) to[out=180,in=180] (2.5,2.5);

\draw (0,0) grid[step=1] (2,8);
\draw (0,0) -- (2,2);
\draw (0,4) -- (2,6);
\fill (0.5,0.5) circle (0.2);
\fill (0.5,1.5) circle (0.2);
\fill (1.5,4.5) circle (0.2);
\fill (1.5,5.5) circle (0.2);
\begin{scope}[shift={(4,8)},rotate=180]
\draw (0,0) grid[step=1] (2,8);
\draw (0,0) -- (2,2);
\draw (0,4) -- (2,6);
\fill (0.5,0.5) circle (0.2);
\fill (0.5,1.5) circle (0.2);
\fill (1.5,4.5) circle (0.2);
\fill (1.5,5.5) circle (0.2);
\end{scope}
\end{scope}

\begin{scope}[xshift=20cm]
\draw[line width=0.4mm,color=red] (0.5,2.5) -- (1.5,1.5);
\draw[line width=0.4mm,color=red] (2.5,6.5) -- (3.5,5.5);

\draw (0,0) grid[step=1] (2,8);
\draw (0,0) -- (2,2);
\draw (0,4) -- (2,6);
\fill (0.5,0.5) circle (0.2);
\fill (1.5,1.5) circle (0.2);
\fill (0.5,2.5) circle (0.2);
\fill (1.5,3.5) circle (0.2);
\begin{scope}[shift={(4,8)},rotate=180]
\draw (0,0) grid[step=1] (2,8);
\draw (0,0) -- (2,2);
\draw (0,4) -- (2,6);
\fill (0.5,0.5) circle (0.2);
\fill (1.5,1.5) circle (0.2);
\fill (0.5,2.5) circle (0.2);
\fill (1.5,3.5) circle (0.2);
\end{scope}
\end{scope}

\begin{scope}[shift={(0,-9)}]
\draw[line width=0.4mm,color=red] (0.5,2.5) -- (1.5,1.5);
\draw[line width=0.4mm,color=red] (2.5,6.5) -- (3.5,5.5);
\draw[line width=0.4mm,color=blue] (1.5,4.5) -- (2.5,3.5);

\draw (0,0) grid[step=1] (2,8);
\draw (0,0) -- (2,2);
\draw (0,4) -- (2,6);
\fill (0.5,0.5) circle (0.2);
\fill (1.5,1.5) circle (0.2);
\fill (0.5,2.5) circle (0.2);
\fill (1.5,4.5) circle (0.2);
\begin{scope}[shift={(4,8)},rotate=180]
\draw (0,0) grid[step=1] (2,8);
\draw (0,0) -- (2,2);
\draw (0,4) -- (2,6);
\fill (0.5,0.5) circle (0.2);
\fill (1.5,1.5) circle (0.2);
\fill (0.5,2.5) circle (0.2);
\fill (1.5,4.5) circle (0.2);
\end{scope}
\end{scope}

\begin{scope}[shift={(5,-9)}]
\draw[line width=0.4mm,color=red] (0.5,3.5) -- (1.5,2.5);
\draw[line width=0.4mm,color=red] (2.5,5.5) -- (3.5,4.5);
\draw[line width=0.4mm,color=blue] (0.5,3.5) -- (1.5,1.5);
\draw[line width=0.4mm,color=blue] (2.5,6.5) -- (3.5,4.5);

\draw (0,0) grid[step=1] (2,8);
\draw (0,0) -- (2,2);
\draw (0,4) -- (2,6);
\fill (0.5,0.5) circle (0.2);
\fill (1.5,1.5) circle (0.2);
\fill (1.5,2.5) circle (0.2);
\fill (0.5,3.5) circle (0.2);
\begin{scope}[shift={(4,8)},rotate=180]
\draw (0,0) grid[step=1] (2,8);
\draw (0,0) -- (2,2);
\draw (0,4) -- (2,6);
\fill (0.5,0.5) circle (0.2);
\fill (1.5,1.5) circle (0.2);
\fill (1.5,2.5) circle (0.2);
\fill (0.5,3.5) circle (0.2);
\end{scope}
\end{scope}

\begin{scope}[shift={(10,-9)}]
\draw[line width=0.4mm,color=red] (0.5,3.5) -- (2.5,2.5);
\draw[line width=0.4mm,color=red] (1.5,5.5) -- (3.5,4.5);
\draw[line width=0.4mm,color=blue] (0.5,3.5) -- (1.5,1.5);
\draw[line width=0.4mm,color=blue] (2.5,6.5) -- (3.5,4.5);
\draw[line width=0.4mm,color=ForestGreen] (1.5,5.5) -- (2.5,2.5);

\draw (0,0) grid[step=1] (2,8);
\draw (0,0) -- (2,2);
\draw (0,4) -- (2,6);
\fill (0.5,0.5) circle (0.2);
\fill (1.5,1.5) circle (0.2);
\fill (2.5,2.5) circle (0.2);
\fill (0.5,3.5) circle (0.2);
\begin{scope}[shift={(4,8)},rotate=180]
\draw (0,0) grid[step=1] (2,8);
\draw (0,0) -- (2,2);
\draw (0,4) -- (2,6);
\fill (0.5,0.5) circle (0.2);
\fill (1.5,1.5) circle (0.2);
\fill (2.5,2.5) circle (0.2);
\fill (0.5,3.5) circle (0.2);
\end{scope}
\end{scope}

\begin{scope}[shift={(15,-9)}]
\draw[line width=0.4mm,color=red] (0.5,4.5) -- (1.5,2.5);
\draw[line width=0.4mm,color=red] (2.5,5.5) -- (3.5,3.5);
\draw[line width=0.4mm,color=blue] (0.5,4.5) -- (1.5,1.5);
\draw[line width=0.4mm,color=blue] (2.5,6.5) -- (3.5,3.5);
\draw[line width=0.4mm,color=ForestGreen] (0.5,4.5) -- (3.5,3.5);

\draw (0,0) grid[step=1] (2,8);
\draw (0,0) -- (2,2);
\draw (0,4) -- (2,6);
\fill (0.5,0.5) circle (0.2);
\fill (1.5,1.5) circle (0.2);
\fill (1.5,2.5) circle (0.2);
\fill (0.5,4.5) circle (0.2);
\begin{scope}[shift={(4,8)},rotate=180]
\draw (0,0) grid[step=1] (2,8);
\draw (0,0) -- (2,2);
\draw (0,4) -- (2,6);
\fill (0.5,0.5) circle (0.2);
\fill (1.5,1.5) circle (0.2);
\fill (1.5,2.5) circle (0.2);
\fill (0.5,4.5) circle (0.2);
\end{scope}
\end{scope}

\begin{scope}[shift={(20,-9)}]
\draw[line width=0.4mm,color=red] (0.5,4.5) -- (1.5,1.5);
\draw[line width=0.4mm,color=red] (2.5,6.5) -- (3.5,3.5);
\draw[line width=0.4mm,color=blue] (0.5,4.5) -- (2.5,2.5);
\draw[line width=0.4mm,color=blue] (1.5,5.5) -- (3.5,3.5);
\draw[line width=0.4mm,color=ForestGreen] (0.5,4.5) -- (3.5,3.5);
\draw[line width=0.4mm,color=orange] (1.5,5.5) -- (2.5,2.5);

\draw (0,0) grid[step=1] (2,8);
\draw (0,0) -- (2,2);
\draw (0,4) -- (2,6);
\fill (0.5,0.5) circle (0.2);
\fill (1.5,1.5) circle (0.2);
\fill (2.5,2.5) circle (0.2);
\fill (0.5,4.5) circle (0.2);
\begin{scope}[shift={(4,8)},rotate=180]
\draw (0,0) grid[step=1] (2,8);
\draw (0,0) -- (2,2);
\draw (0,4) -- (2,6);
\fill (0.5,0.5) circle (0.2);
\fill (1.5,1.5) circle (0.2);
\fill (2.5,2.5) circle (0.2);
\fill (0.5,4.5) circle (0.2);
\end{scope}
\end{scope}

\end{tikzpicture}

\end{center}

\caption{The $10$ elements of $Sp DC_4$.}
\label{fig:SpDC4}

\end{figure}

The polynomial $\sum_{S \in SDC_4} q^{\widetilde{inv}(S)} = 1 + 2q + 3q^2 + 3q^3 + q^4$ turns out to be the Poincar\'e polynomial of 
$SpF_{4}^a$. To prove the general statement (see Theorem \ref{thm:dimDellac}), let us prepare some notation.
 
For all $k \in \{1,\hdots,n\}$ and $l \in \{k+1,\hdots,2n,1,\hdots,k\}$, we define $l_k$ as $l$ if $l \in \{k+1,\hdots,2n\}$, 
and as $2n+l$ otherwise.

For a point $p = (j,i)$ of a symplectic Dellac configuration $S \in SDC_{2n}$, the point $(2n+1-j,4n+1-i)$ of $S$ (symmetrical to $p$ with respect to the center of $S$) is denoted by $p_{sym}$. Note that if $p = (j,l_j)$, then $p_{sym} = (2n+1-j,(2n+1-l)_j)$.

\begin{thm}
\label{thm:dimDellac}
Consider a collection ${\bf I}=(I_1,\dots,I_n) \in \mathcal{I}_{2n}$, the corresponding cell $C(\bI)$, and the corresponding 
symmetric  Dellac configuration $S(\bI)$. We have
$$\dim C(\bI) = \widetilde{inv}(S(\bI)).$$
\end{thm}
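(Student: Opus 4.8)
The plan is to connect the inductive dimension formula \eqref{eq:inductiondk} for $d_k$ directly with a count of equivalence classes of inversions of $S(\bI)$, by establishing a bijection between the columns of the graphical representation $\mathcal{P}(\bI)$ and the columns of the Dellac configuration $S(\bI)$, and then by matching the local contributions. First I would recall from \cite{F1} the precise correspondence $\bI \mapsto S(\bI)$: the initiating disks of $C_k(\bI)$ correspond to the points of the $k$-th column of $S(\bI)$ (there are exactly $k$ black disks in $C_k(\bI)$ but, reading the chain $I_1 \subset I_2 \cup \{2\} \subset \cdots$, each column of $S(\bI)$ records the two points that ``enter'' at step $k$, i.e.\ the initiating disks together with the index $k$ when $k \in I_{k-1}$). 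Under this dictionary, an inversion of $S(\bI)$ between a point in column $j$ and a point in column $j' > j$ corresponds precisely to a white disk sitting below an initiating disk $[k:l]$ (for the non-symmetric ``type $A$'' count this is exactly \eqref{celldimA}, reproved here as $d_A(\bI) = \sum_k \sum_{D \in ID_k} w(D)$); so the ordinary inversion number $inv(S(\bI))$ equals $d_A(\bI)$, the dimension of the type $A$ cell $C_A(\bI)$.

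Next I would analyze how the symmetry $R S(\bI) = S(\bI)$ acts on the set $INV(S(\bI))$. An equivalence class under $R$ has size either $1$ or $2$: it has size $1$ exactly when the inversion $(\{p,p'\})$ is $R$-stable, i.e.\ $R\{p,p'\} = \{p,p'\}$, which forces $p' = p_{sym}$. Hence
$$\widetilde{inv}(S(\bI)) = \frac{inv(S(\bI)) + f(\bI)}{2},$$
where $f(\bI)$ is the number of $R$-stable inversions, i.e.\ pairs $\{p, p_{sym}\}$ that form an inversion. Using the coordinate description from Lemma \ref{lem:ac} and Corollary \ref{comparedimensions}, the number of linear relations cutting $C(\bI)$ out of $C_A(\bI)$ is $\sum_{1\le a <_n b \le n} s(a,b,n)$, and one checks that this equals $\tfrac12(d_A(\bI) - f(\bI))$: indeed each unordered pair $\{a,b\}\subset I_n$ with $2n+1-a <_n b$ and $2n+1-b<_n a$ (equivalently $a,b \ge_n n+1$) contributes two inversions of $S(\bI)$ that are swapped by $R$ among themselves — namely $x_{a,2n+1-b}$ and $x_{b,2n+1-a}$ correspond to two non-$R$-stable inversions identified by $R$ — while each pair with only one of the two inequalities holding contributes one inversion that is itself $R$-stable. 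Assembling this bookkeeping gives
$$d(\bI) = \dim C(\bI) = d_A(\bI) - \tfrac12\bigl(inv(S(\bI)) - f(\bI)\bigr) = inv(S(\bI)) - \tfrac12\bigl(inv(S(\bI)) - f(\bI)\bigr) = \widetilde{inv}(S(\bI)),$$
using $inv(S(\bI)) = d_A(\bI)$.

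A cleaner way to organize the same computation, and the one I would actually write, is to prove that the inductive contribution $\sum_{D=[k:l]\in ID_k(\bI)} (w(D) - \sum_{[k:l']\in B(D)} s(l,l',k))$ to $d_k$ in \eqref{eq:inductiondk} equals the number of equivalence classes of inversions of $S(\bI)$ whose rightmost point lies in the $k$-th column and whose leftmost point lies in a column $\le k$, summed appropriately so that no class is counted twice. Concretely: inversions with both points in columns $< k$ are handled by induction; the new classes ``created at step $k$'' are in bijection with the white disks below initiating disks of $C_k(\bI)$ except that, whenever both endpoints $p, p'$ of such a new inversion satisfy $p, p' \ge_k 2n-k+1$ (this is exactly the condition defining $s$), the class of $\{p,p'\}$ coincides with the class of an already-counted inversion $\{R p, R p'\}$ from an earlier column, so it must be subtracted — and the number of such coincidences is exactly $\sum s(l,l',k)$. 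The main obstacle is precisely this matching: one must verify carefully that the combinatorial condition $a,b\ge_k 2n-k+1$ and $2n+1-b<_k a$ defining $s(a,b,k)$ translates, under the $\bI \leftrightarrow S(\bI)$ dictionary and the orderings $>_k$, into ``the inversion created by this white disk is $R$-equivalent to an inversion already accounted for,'' with no double-counting and no omissions across the $n$ steps. Once this local correspondence is pinned down, summing \eqref{eq:inductiondk} over $k$ telescopes to $d_n = \widetilde{inv}(S(\bI))$, which is the assertion of Theorem \ref{thm:dimDellac}.
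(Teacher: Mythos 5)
Your first, ``bookkeeping'' argument has a concrete error at its foundation: the claim $inv(S(\bI)) = d_A(\bI)$ is false. Your dictionary between inversions of $S(\bI)$ and white disks below initiating disks of $\mathcal{P}(\bI)$ only captures inversions whose \emph{first} (i.e.\ upper-left) point lies in one of the first $n$ columns, because $\mathcal{P}(\bI)$ has only $n$ columns. A symmetric Dellac configuration of size $2n\times 4n$ also has inversions entirely supported in columns $n+1,\dots,2n$, and these contribute nothing to $d_A(\bI)$ but are counted in $inv(S(\bI))$. For instance with $n=2$, $\bI=(\{1\},\{1,3\})$, one gets $d_A(\bI)=3$ while $inv(S(\bI))=5$. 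Consequently the chain of identities at the end of your first paragraph collapses: plugging those numbers into your formula $d(\bI) = d_A(\bI) - \tfrac12(inv - f)$ (with $f=1$) gives $1$, whereas the true cell dimension is $3 = \widetilde{inv}(S(\bI))$. The correct relation is
\[
d_A(\bI) = A + C,\qquad inv(S(\bI)) = 2A + C,\qquad \widetilde{inv}(S(\bI)) = A + \tfrac12(C+f),
\]
where $A$ is the number of inversions with both points in columns $\le n$ and $C$ is the number of inversions with first point in a column $\le n$ and second point in a column $>n$; so the number of linear relations from Corollary~\ref{comparedimensions} is $\tfrac12(C-f)$, not $\tfrac12(inv-f)$. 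A second confusion: for $a<_n b$ the two conditions $2n+1-a<_n b$ and $2n+1-b<_n a$ are \emph{equivalent} (because $x\mapsto 2n+1-x$ reverses the order $<_n$), so the case split in your paragraph into ``both hold'' versus ``only one holds'' is vacuous.

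Your second, ``cleaner'' plan is precisely the paper's strategy: prove by induction on $k$ that $d_k$ counts equivalence classes in $\widetilde{INV}(S(\bI))$ that have a representative whose first point lies in one of the first $k$ columns, with the subtraction $\sum_{[k:l']\in B(D)} s(l,l',k)$ removing the classes already counted from an earlier column. However, you explicitly flag the key step -- the translation ``$s(l,l',k)=1$ if and only if $(p,p'_{sym})\in INV(S(\bI))$, equivalently $(p',p_{sym})\in INV(S(\bI))$'' -- as ``the main obstacle'' to be ``verified carefully,'' and you do not carry it out. That verification is exactly the equivalences $(i)\Leftrightarrow(ii)\Leftrightarrow(iii)\Leftrightarrow(iv)$ which form the substance of the paper's proof, using that all points of a Dellac configuration lie above the diagonal so that $2n-k < (2n+1-l')_k$. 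So in its present form the proposal contains a false reduction in part one and only a correct outline, missing its crucial lemma, in part two.
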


\begin{proof}
Recall~\cite{F1} that the bijection $\bI \in \mathcal{I}_{2n} \mapsto S(\bI) \in SDC_{2n}$ consists of:
\begin{enumerate}[label=(\alph*)]
\item drawing an empty tableau $S$ made of $2n$ columns and $4n$ rows;
\item printing the parallelogram $\mathcal{P}(\bI)$ in such a way that the disk $[1:2]$ of $\mathcal{P}(\bI)$ is located in the box $(1,2_1) = (1,2)$ of $S$;
\item erasing all the disks, except the initiating ones (which we paint in blue);
\item for all $i \in \{1,2,\hdots,n\}$, if the $i$-th row (from bottom to top) of $S$ is still empty, then printing a red disk in the box $(i,i)$;
\item finally, applying the central reflection with respect to the center of $S$ to the points of its $n$ first columns.
\end{enumerate}
For the example $\bI = \bI_0 \in \mathcal{I}_8$ of Figure \ref{fig:PI0}, we obtain the following :
\begin{center}
\begin{tikzpicture}[scale=0.3]

\draw (0,0) grid[step=1] (8,16);

\begin{scope}[xshift=-0.5cm,yshift=-0.5cm]

\draw (1,2) node[scale=1]{$\circ$};
\draw [color=blue] (1,3) node[scale=1]{$\bullet$};
\draw (1,4) node[scale=1]{$\circ$};
\draw (1,5) node[scale=1]{$\circ$};
\draw (1,6) node[scale=1]{$\circ$};
\draw (1,7) node[scale=1]{$\circ$};
\draw (1,8) node[scale=1]{$\circ$};
\draw (1,9) node[scale=1]{$\circ$};

\draw (2,3) node[scale=1]{$\bullet$};
\draw (2,4) node[scale=1]{$\circ$};
\draw (2,5) node[scale=1]{$\circ$};
\draw (2,6) node[scale=1]{$\circ$};
\draw [color=blue] (2,7) node[scale=1]{$\bullet$};
\draw (2,8) node[scale=1]{$\circ$};
\draw (2,9) node[scale=1]{$\circ$};
\draw (2,10) node[scale=1]{$\circ$};

\draw [color=blue] (3,4) node[scale=1]{$\bullet$};
\draw (3,5) node[scale=1]{$\circ$};
\draw (3,6) node[scale=1]{$\circ$};
\draw (3,7) node[scale=1]{$\bullet$};
\draw (3,8) node[scale=1]{$\circ$};
\draw [color=blue] (3,9) node[scale=1]{$\bullet$};
\draw (3,10) node[scale=1]{$\circ$};
\draw (3,11) node[scale=1]{$\circ$};

\draw (4,5) node[scale=1]{$\circ$};
\draw [color=blue] (4,6) node[scale=1]{$\bullet$};
\draw (4,7) node[scale=1]{$\bullet$};
\draw (4,8) node[scale=1]{$\circ$};
\draw (4,9) node[scale=1]{$\bullet$};
\draw (4,10) node[scale=1]{$\circ$};
\draw (4,11) node[scale=1]{$\circ$};
\draw [color=blue] (4,12) node[scale=1]{$\bullet$};

\draw (1,3) [shorten <= 0.1cm, shorten >= 0.1cm] -- (2,3);
\draw (1,4) [shorten <= 0.1cm, shorten >= 0.1cm] -- (2,4);
\draw (2,4) [shorten <= 0.1cm, shorten >= 0.1cm] -- (3,4);
\draw (1,5) [shorten <= 0.1cm, shorten >= 0.1cm] -- (2,5);
\draw (2,5) [shorten <= 0.1cm, shorten >= 0.1cm] -- (3,5);
\draw (3,5) [shorten <= 0.1cm, shorten >= 0.1cm] -- (4,5);
\draw (1,6) [shorten <= 0.1cm, shorten >= 0.1cm] -- (2,6);
\draw (2,6) [shorten <= 0.1cm, shorten >= 0.1cm] -- (3,6);
\draw (3,6) [shorten <= 0.1cm, shorten >= 0.1cm] -- (4,6);
\draw (1,7) [shorten <= 0.1cm, shorten >= 0.1cm] -- (2,7);
\draw (2,7) [shorten <= 0.1cm, shorten >= 0.1cm] -- (3,7);
\draw (3,7) [shorten <= 0.1cm, shorten >= 0.1cm] -- (4,7);
\draw (1,8) [shorten <= 0.1cm, shorten >= 0.1cm] -- (2,8);
\draw (2,8) [shorten <= 0.1cm, shorten >= 0.1cm] -- (3,8);
\draw (3,8) [shorten <= 0.1cm, shorten >= 0.1cm] -- (4,8);
\draw (1,9) [shorten <= 0.1cm, shorten >= 0.1cm] -- (2,9);
\draw (2,9) [shorten <= 0.1cm, shorten >= 0.1cm] -- (3,9);
\draw (3,9) [shorten <= 0.1cm, shorten >= 0.1cm] -- (4,9);
\draw (2,10) [shorten <= 0.1cm, shorten >= 0.1cm] -- (3,10);
\draw (3,10) [shorten <= 0.1cm, shorten >= 0.1cm] -- (4,10);
\draw (3,11) [shorten <= 0.1cm, shorten >= 0.1cm] -- (4,11);

\end{scope}

\draw (-0.5,1.5) node[scale=0.6]{$2$};
\draw (-0.5,2.5) node[scale=0.6]{$3$};
\draw (-0.5,3.5) node[scale=0.6]{$4$};
\draw (-0.5,4.5) node[scale=0.6]{$5$};
\draw (-0.5,5.5) node[scale=0.6]{$6$};
\draw (-0.5,6.5) node[scale=0.6]{$7$};
\draw (-0.5,7.5) node[scale=0.6]{$8$};

\draw (8.5,8.5) node[scale=0.6]{$1$};
\draw (8.5,9.5) node[scale=0.6]{$2$};
\draw (8.5,10.5) node[scale=0.6]{$3$};
\draw (8.5,11.5) node[scale=0.6]{$4$};

\draw (0.5,-0.5) node[scale=0.6]{$1$};
\draw (1.5,-0.5) node[scale=0.6]{$2$};
\draw (2.5,-0.5) node[scale=0.6]{$3$};
\draw (3.5,-0.5) node[scale=0.6]{$4$};

\draw (0,0) -- (8,8);
\draw (0,8) -- (8,16);

\draw (10,8) node[scale=1]{$\rightarrow$};

\draw (4,-2) node[scale=1]{$\text{(a)+(b)}$};

\begin{scope}[xshift=12cm]

\draw (0,0) grid[step=1] (8,16);

\begin{scope}[xshift=-0.5cm,yshift=-0.5cm]

\draw [color=blue] (1,3) node[scale=1]{$\bullet$};
\draw [color=blue] (2,7) node[scale=1]{$\bullet$};
\draw [color=blue] (3,4) node[scale=1]{$\bullet$};
\draw [color=blue] (3,9) node[scale=1]{$\bullet$};
\draw [color=blue] (4,6) node[scale=1]{$\bullet$};
\draw [color=blue] (4,12) node[scale=1]{$\bullet$};

\draw [color=red] (1,1) node[scale=1]{$\bullet$};
\draw [color=red] (2,2) node[scale=1]{$\bullet$};

\end{scope}

\draw (-0.5,1.5) node[scale=0.6]{$2$};
\draw (-0.5,2.5) node[scale=0.6]{$3$};
\draw (-0.5,3.5) node[scale=0.6]{$4$};
\draw (-0.5,4.5) node[scale=0.6]{$5$};
\draw (-0.5,5.5) node[scale=0.6]{$6$};
\draw (-0.5,6.5) node[scale=0.6]{$7$};
\draw (-0.5,7.5) node[scale=0.6]{$8$};

\draw (8.5,8.5) node[scale=0.6]{$1$};
\draw (8.5,9.5) node[scale=0.6]{$2$};
\draw (8.5,10.5) node[scale=0.6]{$3$};
\draw (8.5,11.5) node[scale=0.6]{$4$};

\draw (0.5,-0.5) node[scale=0.6]{$1$};
\draw (1.5,-0.5) node[scale=0.6]{$2$};
\draw (2.5,-0.5) node[scale=0.6]{$3$};
\draw (3.5,-0.5) node[scale=0.6]{$4$};

\draw (0,0) -- (8,8);
\draw (0,8) -- (8,16);

\draw (10,8) node[scale=1]{$\rightarrow$};

\draw (4,-2) node[scale=1]{$\text{(c)+(d)}$};

\end{scope}

\begin{scope}[xshift=24cm]

\draw (0,0) grid[step=1] (8,16);

\begin{scope}[xshift=-0.5cm,yshift=-0.5cm]

\draw [color=blue] (1,3) node[scale=1]{$\bullet$};
\draw [color=blue] (2,7) node[scale=1]{$\bullet$};
\draw [color=blue] (3,4) node[scale=1]{$\bullet$};
\draw [color=blue] (3,9) node[scale=1]{$\bullet$};
\draw [color=blue] (4,6) node[scale=1]{$\bullet$};
\draw [color=blue] (4,12) node[scale=1]{$\bullet$};

\draw [color=blue] (8,14) node[scale=1]{$\bullet$};
\draw [color=blue] (7,10) node[scale=1]{$\bullet$};
\draw [color=blue] (6,13) node[scale=1]{$\bullet$};
\draw [color=blue] (6,8) node[scale=1]{$\bullet$};
\draw [color=blue] (5,11) node[scale=1]{$\bullet$};
\draw [color=blue] (5,5) node[scale=1]{$\bullet$};

\draw [color=red] (1,1) node[scale=1]{$\bullet$};
\draw [color=red] (2,2) node[scale=1]{$\bullet$};
\draw [color=red] (8,16) node[scale=1]{$\bullet$};
\draw [color=red] (7,15) node[scale=1]{$\bullet$};

\end{scope}

\draw (-0.5,0.5) node[scale=0.6]{$1$};
\draw (-0.5,1.5) node[scale=0.6]{$2$};
\draw (-0.5,2.5) node[scale=0.6]{$3$};
\draw (-0.5,3.5) node[scale=0.6]{$4$};
\draw (-0.5,4.5) node[scale=0.6]{$5$};
\draw (-0.5,5.5) node[scale=0.6]{$6$};
\draw (-0.5,6.5) node[scale=0.6]{$7$};
\draw (-0.5,7.5) node[scale=0.6]{$8$};

\draw (8.5,8.5) node[scale=0.6]{$1$};
\draw (8.5,9.5) node[scale=0.6]{$2$};
\draw (8.5,10.5) node[scale=0.6]{$3$};
\draw (8.5,11.5) node[scale=0.6]{$4$};
\draw (8.5,12.5) node[scale=0.6]{$5$};
\draw (8.5,13.5) node[scale=0.6]{$6$};
\draw (8.5,14.5) node[scale=0.6]{$7$};
\draw (8.5,15.5) node[scale=0.6]{$8$};

\draw (-0.5,8.5) node[scale=0.6]{$9$};
\draw (-0.5,9.5) node[scale=0.6]{$10$};
\draw (-0.5,10.5) node[scale=0.6]{$11$};
\draw (-0.5,11.5) node[scale=0.6]{$12$};
\draw (-0.5,12.5) node[scale=0.6]{$13$};
\draw (-0.5,13.5) node[scale=0.6]{$14$};
\draw (-0.5,14.5) node[scale=0.6]{$15$};
\draw (-0.5,15.5) node[scale=0.6]{$16$};

\draw (0.5,-0.5) node[scale=0.6]{$1$};
\draw (1.5,-0.5) node[scale=0.6]{$2$};
\draw (2.5,-0.5) node[scale=0.6]{$3$};
\draw (3.5,-0.5) node[scale=0.6]{$4$};
\draw (4.5,-0.5) node[scale=0.6]{$5$};
\draw (5.5,-0.5) node[scale=0.6]{$6$};
\draw (6.5,-0.5) node[scale=0.6]{$7$};
\draw (7.5,-0.5) node[scale=0.6]{$8$};

\draw (0,0) -- (8,8);
\draw (0,8) -- (8,16);

\draw (4,-2) node[scale=1]{$\text{(e)}$};

\end{scope}

\end{tikzpicture}
\end{center}
In the computation of $d_1 \leq d_2 \leq \hdots \leq d_n = \dim(C(\bI))$, it is then straightforward that $d_1$ is the 
number of inversions whose first element is the upper point $p_1$ of the first column of $S(\bI)$, which is also the 
number of elements of $\widetilde{INV}(S(\bI))$ containing an inversion whose first point is $p_1$.

Suppose now that for some $k \geq 2$, the integer $d_{k-1}$ is the number of elements of $\widetilde{INV}(S(\bI))$ 
containing an inversion whose first point is located in one of the first $k-1$ columns of $S(\bI)$. Let $[k:l] \in ID_k(\bI)$. 
By definition of $S(\bI)$, the box $(k,l_k)$ of $S(\bI)$ contains a point $p$. Here again, it is straightforward that $w(D)$ 
is the number of inversions of $S(\bI)$ whose first point is $p$. Afterwards, let $D' = [k:l'] \in B(D)$ for some $l'<_k l$, 
and let $k' \leq k$ such that $(k',l') \in ID_{k'}(\bI)$, hence the box $(k',l'_{k'})$ of $S(\bI)$ contains a point $p'$. 
Consider the points $p_{sym} = (2n+1-k,(2n+1-l)_k)$ and $p'_{sym} = (2n+1-k',(2n+1-l')_{k'}) = (2n+1-k',(2n+1-l')_{k})$ of $S(\bI)$. 
The following properties are equivalent :
\begin{enumerate}
\item $(p,p'_{sym}) \in INV(S(\bI))$;
\item $(p',p_{sym}) \in INV(S(\bI))$;
\item $l_k > (2n+1-l')_k$;
\item $s(l,l',k) = 1$. 
\end{enumerate}
The equivalences $(i) \Leftrightarrow (ii) \Leftrightarrow (iii)$ are obvious. Now, since the dots of a Dellac configuration are located above the line $ y = x$, the existence of the point $p'_{sym} = (2n+1-k',(2n+1-l')_k)$ implies
$$2n-k \leq 2n-k' < (2n+1-l')_k,$$
so $(iii)$ is equivalent to 
$$2n-k <_k 2n+1-l' <_k l,$$
which is equivalent to $(iv)$.
In Figure \ref{fig:description}, we illustrate a situation with 3 points $p,p',p''$ where $p$ and $p'$ have the properties 
$(i),...,(iv)$, as opposed to $p$ and $p''$. The inversions $(p,p'_{sym})$ and $(p',p_{sym})$ having the same class in $INV(S(\bI))$ 
are expressed by drawing them in red.
\begin{figure}
\begin{tikzpicture}[scale=0.58]

\draw (-0.5,1.5) node[scale=0.8]{$2$};
\draw (-0.5,4.5) node[scale=0.8]{$k$};
\draw (-1,5.5) node[scale=0.8]{$k+1$};
\draw (-1,11.5) node[scale=0.8]{$2n-k$};
\draw (-0.5,15.5) node[scale=0.8]{$2n$};

\draw (4.5,18.5) node[scale=1]{$p$};
\draw (16.5,18.5) node[scale=0.8]{$l$};
\draw (3.5,14.5) node[scale=1]{$p'$};
\draw (-0.5,14.5) node[scale=0.8]{$l'$};
\draw (2.5,8.5) node[scale=1]{$p''$};
\draw (-0.5,8.5) node[scale=0.8]{$l''$};

\draw (11.55,13.5) node[scale=0.75]{$p_{sym}$};
\draw (-1.5,13.5) node[scale=0.8]{$2n+1-l$};
\draw (-1.5,13.5) node[scale=0.8]{$2n+1-l$};
\draw (12.65,17.5) node[scale=0.9]{$p'_{sym}$};
\draw (17.5,17.5) node[scale=0.8]{$2n+1-l'$};
\draw (13.4,23.5) node[scale=0.9]{$p''_{sym}$};
\draw (17.5,23.5) node[scale=0.8]{$2n+1-l''$};

\draw [line width=0.4mm,color=black,opacity=0.5] (4.5,18.5) [shorten <= 0.2cm, shorten >= 0.4cm]  -- (11.5,13.5);
\draw [line width=0.4mm,color=red] (3.5,14.5) [shorten <= 0.2cm, shorten >= 0.4cm]  -- (11.5,13.5);
\draw [line width=0.4mm,color=red] (4.5,18.5) [shorten <= 0.2cm, shorten >= 0.4cm]  -- (12.5,17.5);

\draw (16.5,16.5) node[scale=0.8]{$1$};
\draw (16.5,20.5) node[scale=0.8]{$k$};
\draw (17,30.5) node[scale=0.8]{$2n-1$};

\draw (0.5,-0.5) node[scale=0.8]{$1$};
\draw (1.5,-0.5) node[scale=0.8]{$2$};
\draw (4.5,-0.5) node[scale=0.8]{$k$};
\draw (7.5,-0.5) node[scale=0.8]{$n$};
\draw (8.5,-0.5) node[scale=0.8]{$n+1$};
\draw (11.5,-0.5) node[scale=0.8]{$2n+1-k$};
\draw (15.5,-0.5) node[scale=0.8]{$2n$};

\draw (0,21) -- (16,21);
\draw (0,20) -- (16,20);
\draw (0,11) -- (16,11);
\draw (0,12) -- (16,12);
\draw (0,6) -- (16,6);
\draw (0,5) -- (16,5);
\draw (0,4) -- (16,4);
\draw (0,0) -- (16,16);
\draw (0,16) -- (16,32);
\draw (0,0) rectangle (16,32);
\draw (8,0) -- (8,32);
\draw (7,0) -- (7,32);
\draw (9,0) -- (9,32);
\draw (4,0) -- (4,32);
\draw (5,0) -- (5,32);
\draw (11,0) -- (11,32);
\draw (12,0) -- (12,32);
\draw (0,1) -- (16,1);
\draw (0,2) -- (16,2);
\draw (0,15) -- (16,15);
\draw (0,16) -- (16,16);
\draw (0,17) -- (16,17);
\draw (0,32) -- (16,32);
\draw (0,31) -- (16,31);
\draw (1,0) -- (1,32);
\draw (2,0) -- (2,32);
\draw (15,0) -- (15,32);
\draw (0,30) -- (16,30);
\draw (14,0) -- (14,32);

\end{tikzpicture}
\caption{Symmetric Dellac configuration $S(\bI)$.}
\label{fig:description}
\end{figure}
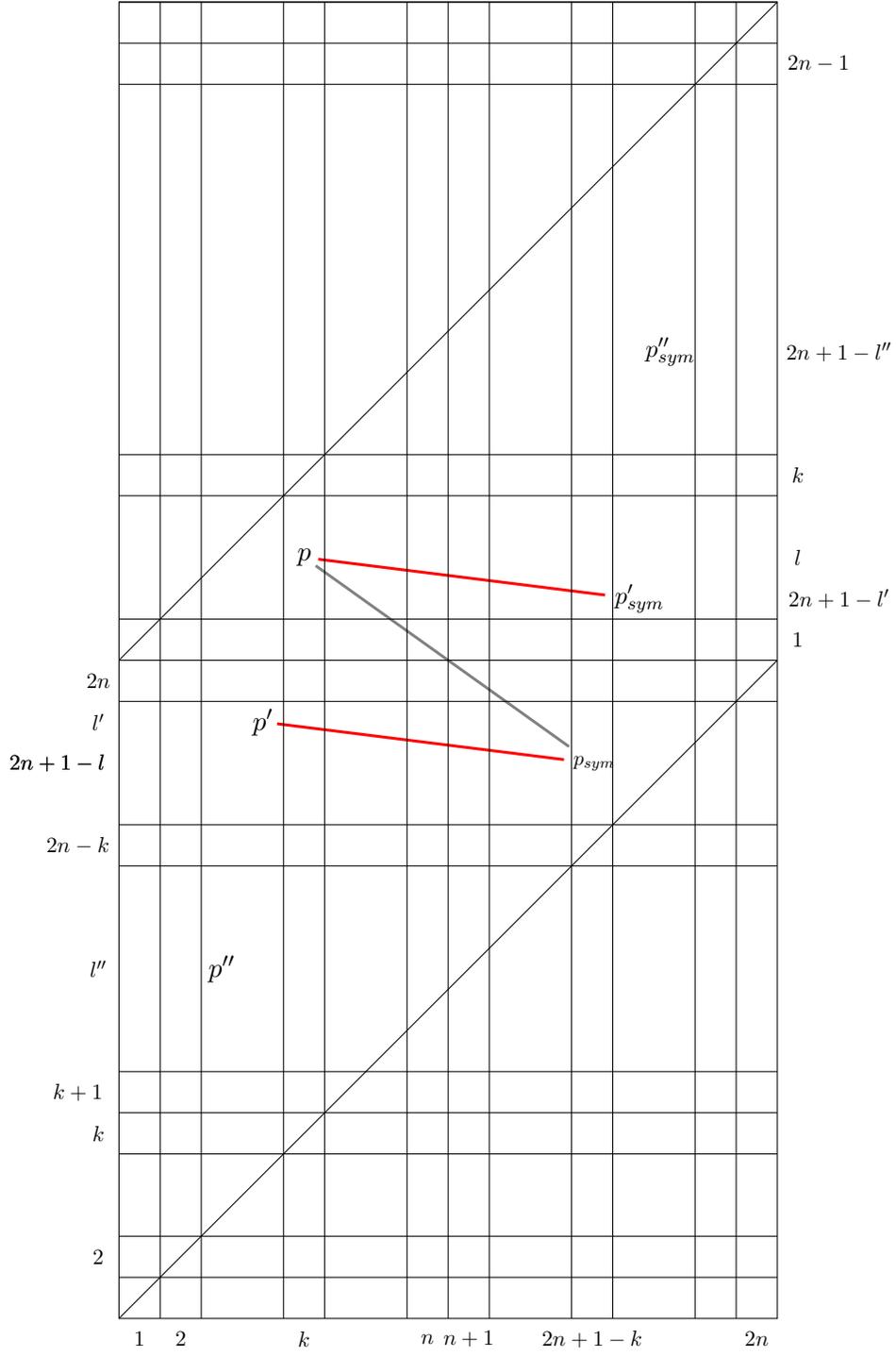

In general, the equivalence $(i) \Leftrightarrow (iv)$ implies that the integer $w(D) - \sum_{D' = [k:l'] \in B(D)} s(l,l',k)$ is the number of elements of $\widetilde{INV}(S(\bI))$ that contain an inversion of $S(\bI)$ whose first point is $p$, but no inversion whose first point is located in one of the $k-1$ first columns of $S(\bI)$. By hypothesis of induction, and in view of 
formula (\ref{eq:inductiondk}), we obtain that $d_k$ is the number of elements of $\widetilde{INV}(S(\bI))$ containing an inversion whose first point is located in one of the first $k$ columns of $S(\bI)$. By induction, we have in particular $\dim(C(\bI)) = d_n =  |\widetilde{INV}(S(\bI))|$.
\end{proof}

\section{Degenerate flags: odd symplectic case}
\label{OSC}
Let us consider a $(2n+1)$-dimensional vector space $E$ with a fixed basis $(e_1,\dots,e_{2n+1})$ equipped with the
skew-symmetric bilinear form of maximal possible rank with non-trivial pairings $(e_i,e_{2n+2-i})=1$ for $i=1,\dots,n$. 
In particular, $e_{n+1}$ spans the kernel of this form. 

We define the variety $SpF^a_{2n+1}$ as the variety of collections of subspaces $(V_k)_{k=1}^n$, $V_k\subset E$ such that
$\dim V_k=k$, $pr_{k+1} V_k\subset V_{k+1}$ and $V_n$ is isotropic. We note that $V_n$ is isotropic if and only if
$(V_n,pr_{n+1}V_n)=0$. The coordinate flags in $SpF^a_{2n+1}$ 
are labeled by collections $(I_k)_{k=1}^n$
of subsets of $\{1,\dots,2n+1\}$ such that $I_k\subset I_{k+1}\cup \{k+1\}$ and $a\in I_n$ implies $2n+2-a\notin I_n$ for $1\le a\le n$.
We denote the set of such collections by ${\mathcal I}_{2n+1}$.

\begin{lem}\label{cf-sdc}
The number of the coordinate flags in $SpF^a_{2n+1}$ is equal to the number of odd symmetric Dellac configurations.
\end{lem}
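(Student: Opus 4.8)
The plan is to prove the lemma by constructing an explicit bijection $\Phi\colon\mathcal I_{2n+1}\to SDC_{2n+1}$, obtained by adapting almost verbatim the bijection $\bI\mapsto S(\bI)$ of \cite{F1} recalled inside the proof of Theorem~\ref{thm:dimDellac}. Given $\bI=(I_1,\dots,I_n)\in\mathcal I_{2n+1}$, I would draw the empty tableau $S$ with $2n+1$ columns and $4n+2$ rows; print the parallelogram $\mathcal P(\bI)$, now with $n$ columns, whose $k$-th column consists of the disks $[k:l]$, $l\in\{k+1,\dots,2n+1,1,\dots,k\}$, placed in the boxes $(k,l_k)$ with $l_k=l$ for $l>k$ and $l_k=2n+1+l$ for $l\le k$ (so the $k$-th column of $\mathcal P(\bI)$ occupies every admissible row of the $k$-th column of $S$ except row $k$); erase all disks except the initiating ones; for every $i\in\{1,\dots,n\}$ whose row is still empty add a disk in the box $(i,i)$; and finally reflect the points of columns $1,\dots,n$ through the centre of $S$, copying column $k$ onto column $2n+2-k$. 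After these steps columns $1,\dots,n$ and $n+2,\dots,2n+1$ each carry exactly two points, and only the central column $n+1$ is empty.

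The one genuinely new feature compared with the even case is that the content of column $n+1$ is not a free parameter: it is forced. By construction every row $i\in\{1,\dots,n\}$ is occupied by a point in some column $\le i$ (a blue initiating disk, or else the red disk at $(i,i)$), hence by central symmetry every row in $\{3n+3,\dots,4n+2\}$ is occupied; therefore the rows still empty all lie in the band $\{n+1,\dots,3n+2\}$, which is exactly the set of admissible rows of the $(n+1)$-st column. I would then prove that the $4n$ points already placed occupy $4n$ pairwise distinct rows; since the partial filling is already centrally symmetric and $4n+2$ is odd, the two remaining rows form a mirror pair $\{m,4n+3-m\}$, and placing the two points of column $n+1$ there yields a genuine, centrally symmetric Dellac configuration, which is $\Phi(\bI)\in SDC_{2n+1}$.

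The distinctness of these $4n$ rows splits into two cases. That there is no collision \emph{inside} columns $1,\dots,n$ is immediate from the construction: each such column receives exactly two points, distinct initiating disks lie in distinct rows, and a red disk is put in row $i\le n$ only when that row is otherwise empty; the mirror columns behave the same way by symmetry. A collision \emph{across the centre}, between a point of some column $k\le n$ lying in row $r$ and the mirror of a point of some column $k'\le n$ lying in row $r'$, requires $r+r'=4n+3$; since in columns $1,\dots,n$ no row exceeds $3n+1$, this forces $r,r'\in\{n+2,\dots,3n+1\}$, and translating rows back to labels one checks that such a pair exists precisely when $I_n$ contains both $a$ and $2n+2-a$ for some $a\le n$ — which the defining conditions of $\mathcal I_{2n+1}$ forbid. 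The label $n+1$ is automatically immune, because row $n+1$ has no mirror inside that band; this is the combinatorial shadow of $e_{n+1}$ spanning the kernel of the form and of $n+1$ being left free in the condition on $I_n$. This step is the odd-type analogue of Corollary~\ref{comparedimensions}, and it shows $\Phi$ is well defined.

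It remains to check bijectivity, which I would do by producing a two-sided inverse $\Psi$. Given $S\in SDC_{2n+1}$, restrict to its first $n$ columns, identify the red points (those in diagonal boxes $(i,i)$) and the blue ones, and from the blue points — which are the initiating disks of $\mathcal P(\bI)$ — reconstruct the candidate $\bI=(I_1,\dots,I_n)$ by running the chain $I_1\subset I_2\cup\{2\}\subset\cdots$ forward. One must check $\bI\in\mathcal I_{2n+1}$; the only nonformal condition, $a\in I_n\Rightarrow 2n+2-a\notin I_n$ for $a\le n$, holds because $S$ is an honest Dellac configuration — its violation would, by the collision analysis above, force some row of $S$ to carry two points. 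Then $\Psi\circ\Phi=\mathrm{id}$ is the statement that decoding recovers $\bI$, and $\Phi\circ\Psi=\mathrm{id}$ follows because the first $n$ columns and their mirrors determine the rest of $S$ once the central column is filled as the unique missing mirror pair. I expect the genuine obstacle to be the bookkeeping in the third paragraph: making the equivalence ``collision in the overlap band $\leftrightarrow$ violation of the isotropy condition'' precise at the level of initiating disks rather than of the sets $I_k$ themselves — this is the one point where the odd case really departs from \cite{FF}.
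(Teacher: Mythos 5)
Your proposal is correct and is essentially a careful unpacking of what the paper's one-line proof (``one easily sees'') actually requires. You follow the same route---adapt the Feigin bijection from~\cite{F1} recalled in the proof of Theorem~\ref{thm:dimDellac}---but you make explicit the one genuinely new phenomenon in the odd case, namely that the central column of the tableau is not obtained by mirroring a constructed column: it is \emph{forced} by the fact that, after the $4n$ points of the outer columns have been placed, exactly two rows remain empty and they form a mirror pair inside the admissible band of column $n+1$. This is exactly the point the paper glosses over when it speaks of ``the restriction of this map to the set of coordinate flags of $SpF^a_{2n+1}$'': since a coordinate flag of $SpF^a_{2n+1}$ is only an $n$-tuple $(I_1,\dots,I_n)$, the map $S$ of~\cite{F1} is not literally defined on it, and one has to argue---as you do---that the remaining data is uniquely reconstructible. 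Your translation of ``no row collision across the centre'' into ``$I_n$ never contains both $a$ and $2n+2-a$ with $a\le n$'' is the correct odd analogue of Corollary~\ref{comparedimensions}, and your remark that $a=n+1$ is automatically harmless (reflecting that $e_{n+1}$ spans the radical of the form) is right and worth keeping. The only places where you lean on unproved assertions are the claims that each of the first $n$ columns receives exactly two points and that initiating disks in different columns occupy different rows; both are facts about the type-$A$ bijection of~\cite{F1} (applied to the first $n$ of the $2n$ indices), and a clean write-up should cite them rather than call them ``immediate.'' You already flag this honestly at the end, so the gap is one of exposition, not of substance.
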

\begin{proof}
Recall (see \cite{F1} and the proof of Theorem \ref{thm:dimDellac}) the bijection $S$ between 
coordinate flags in $F^a_{2n+1}$ and Dellac configurations.
One easily sees that the restriction of this map to the set of coordinate flags of $SpF^a_{2n+1}$ is a bijection 
to the set of odd symmetric Dellac configurations.   
\end{proof}

Let us denote the number of elements of $SDC_{2n+1}$ by $l_n$, $n\ge 0$. 
These numbers start with $1,3,21,267,5349$.

\begin{rem}
According to \cite{BF} the numbers $l_n$ show up in the paper \cite{RZ}.
\end{rem}

For $1\le k\le n$ we use the order $<_k$ on the set $\{1,\dots,2n+1\}$:
\[
k+1<_k k+2<_k\dots <_k 2n+1<_k 1<_k\dots <_k k. 
\]
We now construct cellular decomposition of $SpF^a_{2n+1}$. Given an element $(I_k)_{k=1}^n\in {\mathcal I}_{2n+1}$ we define the cell
$C(\bI)$ as the set of collections $(V_k)_{k=1}^n\in SpF^a_{2n+1}$ such that $V_k$ has a basis of the form   
\[
e_a+\sum_{b<_k a} c_{a,b} e_b,\ a\in I_k.
\]
Clearly, $C(\bI)$ is non empty if and only if $\bI$ defines a coordinate flag of $SpF^a_{2n+1}$. 

The proof of the following lemma is very similar to the proof of Lemma \ref{lem:ac} and 
Theorem \ref{thm:algorithm}.

\begin{lem}
$C(\bI)$ is an affine cell, whose complex dimension is computed by the following rule: we first consider all initiating
elements in all $I_k$ and compute the number of empty slots (not belonging to $I_k$) below these initiating elements 
(in the order $<_k$). After that we subtract the number of pairs $a,b\in I_n$ such that $2n+2-b>_n a>_n b$. 
\end{lem}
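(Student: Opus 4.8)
The plan is to mirror, in the odd-dimensional setting, the two-part argument that was used in the even symplectic case: first identify $C(\bI)$ with the trace of $SpF^a_{2n+1}$ on a type $A$ Schubert-like cell, then show that the isotropy condition on $V_n$ imposes exactly the claimed number of independent linear equations. Concretely, I would embed $SpF^a_{2n+1}$ into the partial degenerate flag variety $F^a_{\bd}$ with $\bd=(1,\dots,n)$ inside a $(2n+1)$-dimensional space, so that each $\bI\in\mathcal I_{2n+1}$ picks out a type $A$ cell $C_A(\bI)$ whose points $(V_1,\dots,V_n)$ have bases in echelon form $e_a+\sum_{b<_k a}c_{a,b}e_b$, $a\in I_k$ (using the order $<_k$ on $\{1,\dots,2n+1\}$). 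As in Lemma~\ref{lem:ac} and Theorem~\ref{thm:algorithm}, the dimension of $C_A(\bI)$ is the total number of free parameters $c_{a,b}$, which is exactly the count of empty slots below initiating elements — this is the first summand in the stated rule, and it follows verbatim from \eqref{celldescrA}--\eqref{celldimA} applied to $N=2n+1$ and the partial flag $\bd$.

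Next I would analyze the isotropy condition $(V_n,V_n)=0$ (equivalently $(V_n,pr_{n+1}V_n)=0$, since $e_{n+1}$ spans the kernel of the form). Writing $l_a=e_a+\sum_{b<_n a}c_{a,b}e_b$ for $a\in I_n$, the pairings $(l_{a_1},l_{a_2})$ for $a_1>_n a_2$ in $I_n$ expand, using $(e_i,e_j)=\delta_{i+j,2n+2}$, into a single linear relation $c_{a_1,2n+2-a_2}\pm c_{a_2,2n+2-a_1}=0$ whenever both offending indices $2n+2-a_2$ and $2n+2-a_1$ actually occur below $a_1$ and $a_2$ respectively in the orders; otherwise the pairing vanishes identically. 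The condition that such a nontrivial relation appears is precisely $2n+2-a_2>_n a_1>_n a_2$ (the analogue of $s(a,b,n)=1$ in the even case, now with $2n+2$ in place of $2n+1$ because the space is odd-dimensional). One must check these relations are linearly independent: each involves the distinguished coordinate $c_{a_1,2n+2-a_2}$ with $2n+2-a_2\notin I_n$, and as $(a_1,a_2)$ ranges over the relevant pairs these leading coordinates are pairwise distinct, so the equations can be solved successively. Hence $C(\bI)=SpF^a_{2n+1}\cap C_A(\bI)$ is cut out of an affine space by independent linear equations and is therefore itself an affine cell, of dimension $\dim C_A(\bI)$ minus the number of pairs $a,b\in I_n$ with $2n+2-b>_n a>_n b$. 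Since the $C(\bI)$ for $\bI$ ranging over coordinate flags partition $SpF^a_{2n+1}$ (every point lies in a unique such cell by the standard echelon-form argument), this also yields the cellular decomposition.

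The only subtlety specific to the odd case — and the step I expect to be the real obstacle — is bookkeeping around the kernel vector $e_{n+1}$ and the shift from $2n+1$ to $2n+2$ in the reflection index. Because $e_{n+1}$ pairs trivially with everything, a coordinate $c_{a,n+1}$ never enters any isotropy relation, so one must be careful that the "empty slots below initiating elements" count is genuinely the ambient dimension $\dim C_A(\bI)$ and that no spurious relation is lost or double-counted; in particular, when $a=n+1$ could appear as an index, the corresponding term drops out, which is consistent because $a\in I_n$ implies $2n+2-a\notin I_n$ only for $1\le a\le n$, and $n+1$ is its own reflection. I would phrase the pairing expansion carefully enough to see that the would-be relation for the self-paired middle index is automatically trivial, so the subtraction count is exactly as stated. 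Once this is checked, the rest is a routine transcription of the even symplectic proof, with $2n$ replaced by $2n+1$ in the ground space and $2n+1$ replaced by $2n+2$ in the form, so I would state it as "the proof is entirely parallel to that of Lemma~\ref{lem:ac} and Theorem~\ref{thm:algorithm}" and only spell out the kernel-related point in detail.
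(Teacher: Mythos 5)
Your proposal is correct and matches the paper's intended argument: the paper itself only states that the proof is ``very similar to the proof of Lemma~\ref{lem:ac} and Theorem~\ref{thm:algorithm},'' and you have faithfully transcribed that argument to the odd case, including the shift from $2n+1$ to $2n+2$ in the reflection index and the observation that the self-paired kernel index $n+1$ never contributes a relation. Nothing is missing.
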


Recall the bijection $S$ between ${\mathcal I}_{2n+1}$ and the symmetric Dellac configurations of size $(2n+1)\times (4n+2)$. 
Recall the statistics ${\widetilde{inv}}(D)$ on the set of symmetric Dellac configurations
(counting the number of inversions modulo the central symmetry). 

The proof of the following proposition is very similar to the proof of Theorem \ref{thm:dimDellac}.  
\begin{prop}
$\dim C(\bI)={\widetilde{inv}} S(\bI)$.
\end{prop}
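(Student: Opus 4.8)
The plan is to mimic the proof of Theorem \ref{thm:dimDellac} almost verbatim, adapting the bookkeeping to the odd-dimensional setting. First I would fix the bijection $S:\mathcal{I}_{2n+1}\to SDC_{2n+1}$ recalled in Lemma \ref{cf-sdc}: it is obtained exactly as in steps (a)--(e) of the proof of Theorem \ref{thm:dimDellac}, now drawing a tableau with $2n+1$ columns and $4n+2$ rows, printing the parallelogram $\mathcal{P}(\bI)$ associated to $\bI\in\mathcal{I}_{2n+1}$, erasing all but the initiating disks, filling the still-empty rows among the first $n$ with red disks on the diagonal, and applying the central reflection to the first $n$ columns. The key new feature is the kernel vector $e_{n+1}$, which sits in the central column $n+1$; since $pr_{n+1}$ kills nothing relevant to the pairing, the isotropy condition $(V_n,pr_{n+1}V_n)=0$ translates into linear relations indexed by pairs $a,b\in I_n$ with $2n+2-b>_n a>_n b$, which is precisely the correction term appearing in the dimension formula of the preceding Lemma.

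Next I would run the same inductive argument on $k=1,\dots,n$. Writing $d_0=0$ and defining $d_k$ by the analogue of formula \eqref{eq:inductiondk}, namely
\begin{equation*}
d_k=d_{k-1}+\sum_{D=[k:l]\in ID_k(\bI)}\Bigl(w(D)-\sum_{[k:l']\in B(D)} s'(l,l',k)\Bigr),
\end{equation*}
where $s'(l,l',k)$ is the indicator of the relation $2n+2-l'>_k l>_k l'$ (the odd analogue of $s$, with $2n+1$ replaced by $2n+2$), one checks that $d_n$ equals $\dim C(\bI)$ by exactly the projection-to-first-$k$-components argument used in Theorem \ref{thm:algorithm}: $t_kC_A(\bI)$ is affine of the stated dimension, and the isotropy constraint cuts it down by $\sum_{i\in I_{k-1}} s'(j,i,k)$ linear equations (respectively the obvious variant when $k\in I_{k-1}$). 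The only arithmetic change is the shift $2n+1\rightsquigarrow 2n+2$ coming from the form $(e_i,e_{2n+2-i})=1$; the role of the kernel vector $e_{n+1}$ is simply that $2n+2-(n+1)=n+1$, so $e_{n+1}$ never pairs with anything and contributes no relation, which is consistent with $s'(l,l',k)$ never being triggered by $l$ or $l'$ equal to $n+1$.

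Finally I would translate the inductive quantity $d_k$ into the Dellac side, reproducing the dictionary from the proof of Theorem \ref{thm:dimDellac}: after applying $S$, an initiating disk $[k:l]$ becomes a point $p=(k,l_k)$ of $S(\bI)$, $w(D)$ counts the inversions of $S(\bI)$ with first point $p$, and for $[k:l']\in B(D)$ with associated point $p'$ the four conditions --- $(p,p'_{sym})\in INV$, $(p',p_{sym})\in INV$, $l_k>(2n+2-l')_k$, $s'(l,l',k)=1$ --- are equivalent, using that all points of a Dellac configuration lie weakly above the diagonal $y=x$. Hence $w(D)-\sum_{[k:l']\in B(D)}s'(l,l',k)$ counts the equivalence classes in $\widetilde{INV}(S(\bI))$ that acquire their first representative in column $k$, and summing over $k$ gives $d_n=\widetilde{inv}(S(\bI))$. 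The main obstacle, and the only place that genuinely needs care rather than transcription, is the interplay at the central column: one must verify that the kernel direction $e_{n+1}$ introduces no spurious cell parameter and no spurious inversion class, i.e. that the bijection $S$ still lands a point in column $n+1$ of $S(\bI)$ in a way compatible with the isotropy count --- but since $s'$ is defined using $2n+2-l'$ and $e_{n+1}$ is self-conjugate, this falls out cleanly, exactly as the even case where $V_n^\perp=V_n$.
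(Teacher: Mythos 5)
Your proposal is correct and follows precisely the route the paper intends: the paper's proof is a one-line remark that the argument is ``very similar to the proof of Theorem \ref{thm:dimDellac},'' and your write-up is exactly that adaptation, with the shift $2n+1\rightsquigarrow 2n+2$ in the pairing, the modified indicator $s'$, and the check that the kernel vector $e_{n+1}$ contributes no extra parameter or inversion class.
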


\begin{rem}
Given a point $(V_k)_{k=1}^n\in SpF^a_{2n+1}$ there is no canonical way to complete it to an element of $F^a_{2n+1}$
(i.e. to complete it with $V_{n+1},\dots,V_{2n}$). The reason is that the form is degenerate and thus the orthogonal complement
to, say, $V_n$ can be of dimension $n+1$ or $n+2$. 
We note however that a symmetric Dellac configuration $D$ of size $(2n+1)\times (4n+2)$ does define an element of
$F^a_{2n+1}$ (a coordinate flag). The meaning of the last $n$ components is not clear from symplectic point of view,
but is very natural from the orthogonal story below. 
\end{rem}

\begin{thm}
The variety $SpF^a_{2n+1}$ is irreducible of dimension $n(n+1)$. It carries an action of abelian unipotent group 
with an open dense orbit. 
\end{thm}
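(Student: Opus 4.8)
\emph{The variety $SpF^a_{2n+1}$ is irreducible of dimension $n(n+1)$; it carries an action of an abelian unipotent group with an open dense orbit.}

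The plan is to produce the abelian unipotent group first and use it to force irreducibility, rather than trying to show irreducibility by a direct component analysis. Let $\fn_-^a$ be the abelian Lie algebra associated, via the PBW degeneration of $\mathfrak{sp}_{2n+1}$ (the odd symplectic algebra, or equivalently its PBW-graded analogue) to the representation $V_{\la}$ with $\la$ the sum of the first $n$ fundamental weights, and let $N_-^a = \exp(\fn_-^a)$ be the corresponding abelian unipotent group, acting on $\bP(V_\la^a)$. Concretely one writes down the action of $N_-^a$ on the ambient product $\prod_{k=1}^n \mathrm{Gr}(k,2n+1)$ by the ``degenerate lower-triangular'' operators: the generators act on $E$ by $e_a \mapsto e_a + c\, e_b$ for pairs $(a,b)$ with $b <_n a$ and $a+b \neq 2n+2$, projected appropriately at each step so that the defining conditions $pr_{k+1}V_k \subset V_{k+1}$ and isotropy of $V_n$ are preserved. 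The first step is to check that this does define an action on $SpF^a_{2n+1}$ --- this is a routine verification using that the operators respect both the quiver relations and the (degenerate) symplectic form, and it parallels the even case of Lemma~\ref{lem:ac}.

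Next I would identify the open dense orbit. The natural candidate is the orbit of the ``antidiagonal'' coordinate flag, namely $p(\bI_{\max})$ where $\bI_{\max} = (I_1,\dots,I_n)$ with $I_k = \{2n+1,\dots,2n+2-k\}$ (the top $k$ rows in the order $<_n$, which are the last basis vectors $e_{2n+1}, e_{2n}, \dots$); one checks this lies in $SpF^a_{2n+1}$ since $\{2n+1,\dots,2n+2-k\}$ contains no pair summing to $2n+2$. The cell $C(\bI_{\max})$ attached to this flag by the lemma on cellular decomposition is exactly its $N_-^a$-orbit, and its dimension, computed by the stated rule, is $n(n+1)$: every initiating element $a \in I_k$ contributes the number of empty slots below it, which after summing over $k$ gives $\sum_{k=1}^n \binom{k+1}{2}$-type count, minus the isotropy correction, and one verifies this equals $n(n+1) = \dim SpF^a_{2n+1}$ (the ambient product has the right dimension bound and the correction term is matched). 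The key input here is that $SpF^a_{2n+1}$ has dimension at most $n(n+1)$ --- this follows from the cellular decomposition itself, since every cell $C(\bI)$ has dimension bounded by that of the ``biggest'' cell, or alternatively from a dimension count on the resolution analogous to Section~\ref{resofsing}.

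The main obstacle, and the crux of the argument, is upgrading ``there is a top-dimensional orbit'' to ``the variety is irreducible.'' Having a dense orbit of a \emph{group} is not by itself enough, because $SpF^a_{2n+1}$ is a priori a union of cells and could a priori be disconnected or have several top-dimensional components. The way I would handle this: show that the closure of the orbit $\overline{N_-^a \cdot p(\bI_{\max})}$ is \emph{all} of $SpF^a_{2n+1}$. For this one argues that every coordinate flag $p(\bI)$, $\bI \in \mathcal I_{2n+1}$, lies in this closure --- equivalently, every cell $C(\bI)$ meets it. This is a degeneration/specialization argument: one produces, for each $\bI$, a one-parameter curve in $N_-^a \cdot p(\bI_{\max})$ limiting to $p(\bI)$, using a cocharacter of the residual torus to push the generic flag to the desired coordinate flag, exactly as in the type $A$ and type $C$ cases (this is where one invokes that the odd symplectic degenerate flag variety, like its classical counterpart of Marietti--Pechenik type, is the closure of the abelian orbit --- one can cite the analogous statement for $F^a_N$ and $SpF^a_{2n}$ and adapt it, or argue directly that the closure is $N_-^a$-stable, closed, contains all torus-fixed points, and hence contains every cell since each cell retracts onto its fixed point under the torus flow). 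Once $SpF^a_{2n+1} = \overline{N_-^a \cdot p(\bI_{\max})}$, irreducibility is immediate as the closure of the image of an irreducible variety (an affine space) under a morphism, and the dimension is that of the dense orbit, namely $n(n+1)$.
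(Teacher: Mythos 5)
Your approach is genuinely different from the paper's and, unfortunately, has both a concrete error and a logical gap.

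\textbf{Wrong dense orbit.} You identify the candidate dense orbit as the cell through $p(\bI_{\max})$ with $I_k = \{2n+1,\dots,2n+2-k\}$, calling these the ``top $k$ rows in the order $<_n$.'' They are not: with $<_n$ defined by $n+1 <_n n+2 <_n \dots <_n 2n+1 <_n 1 <_n \dots <_n n$, the largest elements are $n, n-1,\dots$, not $2n+1, 2n,\dots$. More importantly, the cell you name is not top-dimensional. Before the isotropy correction its dimension is $\sum_{k=1}^n (2n+1-2k) = n^2$, already strictly less than $n(n+1)$; for instance at $n=2$ one computes its dimension to be $4$, not $6$. The paper's dense orbit is instead through the point $(\mathrm{span}(e_1,\dots,e_k))_{k=1}^n$, i.e. $I_k = \{1,\dots,k\}$, which does have a cell of dimension $n(n+1)$ (e.g.\ $7-1=6$ at $n=2$ after one isotropy constraint).

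\textbf{Gap in the density argument.} You correctly flag the crux: a top-dimensional orbit is not automatically dense, and indeed the orthogonal analogue $SOF^a_N$ is reducible, so this is a real concern. But your fallback argument --- ``the closure is $N_-^a$-stable, closed, contains all torus-fixed points, and hence contains every cell since each cell retracts onto its fixed point under the torus flow'' --- does not work as stated. A torus-stable closed set can contain the sink of a Bia{\l}ynicki-Birula cell without containing the cell (take $\{0\}\subset\mathbb{A}^1$ with the scaling action). What would close the gap is showing that each cell $C(\bI)$ is itself the $N_-^a$-orbit of $p(\bI)$, so that $N_-^a$-stability plus containing the fixed points forces containment of every cell; you do not establish this, and it is precisely the nontrivial content.

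\textbf{What the paper does.} The paper sidesteps all of this by constructing a resolution $SpR_{2n+1}$, the variety of compatible collections $(V_{i,j})$ with the last ones isotropic, and showing it is a tower of $\bP^1$-fibrations. This immediately gives that $SpR_{2n+1}$ is smooth, irreducible, of dimension $n(n+1)$; the forgetful map to $SpF^a_{2n+1}$ is surjective and birational (an isomorphism over the open locus where the Pl\"ucker coordinate $X_{1,\dots,i}$ of $V_{i,i}$ is nonzero), and irreducibility plus the dimension count follow at once. The group action and its dense orbit through $(\mathrm{span}(e_1,\dots,e_k))_k$ are then established separately, by analogy with the even case. This is cleaner because it does not require knowing in advance which cell is biggest or arguing about orbit closures on the singular variety.

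Your structural instinct --- produce the abelian unipotent group and leverage a dense orbit --- is sound and is in fact how one should think about the second half of the theorem, but as an irreducibility argument it needs the cell-equals-orbit lemma, and the dense cell must be corrected to $I_k = \{1,\dots,k\}$.
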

\begin{proof}
We construct the resolution of singularities of $SpF^a_{2n+1}$ in the spirit of \cite{FFi,FFL} (see also 
section \ref{resofsing}).
Let $SpR_{2n+1}$ be the variety of collections $V_{i,j}$, $1\le i\le n$, $i+j\le 2n+1$ of  subspaces
of $E$ satisfying the following conditions:
\begin{itemize}
\item $\dim V_{i,j}=i$,\ $V_{i,j}\subset {\mathrm{span}} (e_1,\dots,e_i, e_{j+1},e_{j+1},\dots,e_{2n+1})$,
\item $V_{i,j}\subset V_{i+1,j}$,\ $pr_{j+1} V_{i,j}\subset V_{i,j+1}$,
\item $V_{i,2n+1-i}$ is isotropic.
\end{itemize}   
We note that $V_{i,2n+1-i}$ is a subspace of ${\mathrm{span}} (e_1,\dots,e_i, e_{2n+2-i},\dots,e_{2n+1})$ and 
the restriction of our form to this space is non-degenerate. Hence each $V_{i,2n+2-i}$ is an element of the corresponding
Lagrangian Grassmannian.

One easily sees that $SpR_{2n+1}$ is a tower of $\bP^1$ fibrations of dimension $n(n+1)$. Now let $SpR^0_{2n+1}\subset SpR_{2n+1}$
be the open subvariety defined by the condition that the Pl\"ucker coordinates $X_{1,\dots,i}$ of $V_{i,i}$ do not vanish 
for all $i=1,\dots,n$. Then the natural forgetful map
$$\pi: SpR_{2n+1}\to SpF^a_{2n+1}, \ \pi (V_{i,j})_{i,j}\mapsto (V_{i,i})_{i=1}^n$$
is surjective and restricts to an isomorphism on $SpR^0_{2n+1}$.

Finally, let us construct the action of an additive unipotent group on $SpF^a_{2n+1}$. 
An element of this group is a collection $(g_1,\dots,g_n)$ of automorphisms of the space $E$.
The operators $g_k$ should satisfy the following properties:
\begin{itemize}
\item the matrix elements $(g_k)_{i,j}$ are zero unless $j\ge_k 1$, $i<_k 1$,
\item $pr_{k+1}g_k=g_{k+1}pr_{k+1}$ for $k=1,\dots,n-1$;
\item $g_n{\mathrm{span}}(e_1,\dots,e_n)$ is isotropic. 
\end{itemize}
Analogously to the even symplectic case \cite{FFL} one sees that this group is abelian, unipotent and 
the orbit through the point $({\mathrm{span}}(e_1,\dots,e_k))_{k=1}^n$ is an open dense subset of 
$SpF^a_{2n+1}$.
\end{proof}

\begin{rem}
It is tempting to conjecture that $SpF^a_{2n+1}$ is isomorphic to the orbit closure in the projectivized PBW 
degenerate representation of the odd symplectic group.
\end{rem}

\section{Degenerate flags: orthogonal case}
\label{OC}
In this section we introduce and study the orthogonal couterpart of the symplectic story.
We define the corresponding algebraic varieties and study the link between their topology and 
the combinatorics of the symmetric Dellac configurations. We do both odd and even 
orthogonal cases simultaneously providing some details separately for types $B$ and $D$ if necessary.  

Let us consider an $N$-dimensional vector space $E$ with basis $e_1,\dots,e_N$ equipped with symmetric
non-degenerate bilinear form $(e_i,e_j)=\delta_{i+j,N+1}$. 
We fix $n=\floor{N/2}$.

\begin{dfn}
The orthogonal version $SOF^a_N$ of the type $A$ degenerate flag variety is a subvariety of 
$F^a_N$ such that $V_k^\perp=V_{N-k}$. In other words, $SOF^a_N$ consists of collections  $(V_k)_{k=1}^n$
of subspaces of $E$ such that $\dim V_k=k$, $pr_{k+1}V_k\subset V_{k+1}$ and 
\[
(V_n,V_n)=0 \text{ for even } N \text{ and } (V_n,pr_{n+1} V_n)=0 \text{ for odd } N. 
\]
\end{dfn}

\begin{rem}
The following example shows that in contrast to the type $A$ and type $C$ cases the varieties we get
in the orthogonal case are no longer irreducible. In particular, our $SOF^a_N$ is NOT isomorphic 
to the PBW degeneration of $SO_N/B$, where $B$ is a Borel subgroup. However, an irreducible component
of $SO_N/B$ might be an appropriate candidate.
\end{rem}

\begin{example}
Let $\dim E=3$. Then $SOF^a_3$ consists of pairs $(V_1,V_2=V_1^\perp)$ of subspaces of $E$ such that $pr_2 V_1$ is isotropic.
If $ae_1+be_2+ce_3$ is the spanning vector of $V_1$, then one needs $ac=0$. We thus conclude that $SOF^a_3$
is reducible with two irreducible components isomorphic to $\bP^1$ glued at a point. 

Let $\dim E=4$. Then $SOF^a_4$ consists of triples  $(V_1,V_2,V_1^\perp)$ of subspaces of $E$ such that $V_2$ is isotropic
and $pr_2V_1\subset V_2$.
Recall that the isotropic Grassmannian of $2$-dimensional subspaces of a four dimensional space 
has two irreducible components both of dimension one. One can see that $SOF^a_4$ is of dimension two and has four
irreducible components.  
\end{example}  

We start with adopting the resolution of singularities construction to this case.
\begin{dfn}\label{SOR}
$SOR_N$ ($N=2n$ or $N=2n+1$) is the variety of collections $(V_{i,j})$, $1\le i\le j\le n$, $i+j\le N$ 
such that 
\begin{itemize}
\item $\dim V_{i,j}=i$,\ $V_{i,j}\subset {\mathrm{span}} (e_1,\dots,e_i, e_{j+1},e_{j+1},\dots,e_N)$,
\item $V_{i,j}\subset V_{i+1,j}$,\ $pr_{j+1} V_{i,j}\subset V_{i,j+1}$,
\item $V_{i,N-i}$ is isotropic.
\end{itemize}   
\end{dfn}

In particular, each point of $SOR_N$ is determined by $N^2/4$ subspaces for even $N$ and by 
$(N^2-1)/4$ subspaces for odd $N$ (i.e. has $\lfloor N^2/4\rfloor$ components). 
We note that $V_{i,N-i}$ is a subspace of ${\mathrm{span}} (e_1,\dots,e_i, e_{N+1-i},\dots,e_N)$ and 
and hence an element of the orthogonal Grassmannian $OGr(i,2i)$. 

\begin{thm}
$SOR_N$ is smooth disjoint union of $2^n$ components. Each component is a tower of $\bP^1$ fibrations of dimension
$(N-1)^2/4$ for odd $N$ and $N(N-2)/4$ for even $N$.
The map $SOR_N\to SOF^a_N$ forgetting off-diagonal elements of a collection $(V_{i,j})$ is surjective and a birational isomorphism. 
\end{thm}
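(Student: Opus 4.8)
The strategy is to analyze $SOR_N$ column by column (i.e. by the index $j$), building up each point of $SOR_N$ as a tower of $\bP^1$-bundles, with the single exception of the anti-diagonal spaces $V_{i,N-i}$, which live in orthogonal Grassmannians $OGr(i,2i)$ and contribute the disconnectedness. First I would set up the recursion: fix the columns $V_{\bullet,1},\dots,V_{\bullet,j-1}$ and describe the fiber of adding the column $V_{\bullet,j}=(V_{1,j}\subset V_{2,j}\subset\dots)$. For an entry $V_{i,j}$ with $i+j<N$ the constraints are $\dim V_{i,j}=i$, $V_{i,j}\subset W_{i,j}:=\mathrm{span}(e_1,\dots,e_i,e_{j+1},\dots,e_N)$, together with $V_{i-1,j}\subset V_{i,j}$ and $pr_{j}V_{i,j-1}\subset V_{i,j}$; just as in the type $A$ resolution $R_N$ (see Section \ref{resofsing}) these two containments pin down a codimension-one subspace of $V_{i,j}$, so $V_{i,j}$ varies in a $\bP^1$. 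Thus the whole column $V_{\bullet,j}$, for $j<N/2$, fibers as an iterated $\bP^1$-bundle over the previous data. The only subtlety is the top of the anti-diagonal: when $i+j=N$, $V_{i,N-i}$ must be isotropic in $W_{i,N-i}=\mathrm{span}(e_1,\dots,e_i,e_{N+1-i},\dots,e_N)$, which carries the nondegenerate restriction of the form, so $V_{i,N-i}\in OGr(i,2i)$.

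Next I would handle $OGr(i,2i)$, the Grassmannian of maximal isotropic subspaces of a split $2i$-dimensional quadratic space: it has exactly two connected components, each smooth irreducible. One then checks that the constraint $V_{i-1,N-i+1}$ (already constructed) $\subset V_{i,N-i}$ plus $pr\,V_{i,N-i+1-1}\subset$ (appropriate shift) together cut $V_{i,N-i}$ out of a Schubert-type cell inside whichever component of $OGr(i,2i)$ is compatible. The key bookkeeping point is that, once a component of each $OGr(i,2i)$, $1\le i\le n$, is chosen, the resulting locus is a tower of $\bP^1$-fibrations: the choice of component of $OGr(i,2i)$ (for fixed boundary data) contributes a factor $\bZ/2$, so the component count is $2^n$, and within each component the remaining freedom in $V_{i,N-i}$ is again a sequence of $\bP^1$'s (or points). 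Summing the fiber dimensions, one recovers $\lfloor N^2/4\rfloor$ total components with the stated tower dimension: each iterated bundle over the half-triangle $\{(i,j):i\le j\le n, i+j\le N\}$ contributes one $\bP^1$ for most cells, and a short count of how many cells are "degenerate" (forced, or forced by isotropy) yields $(N-1)^2/4$ for odd $N$ and $N(N-2)/4$ for even $N$. Smoothness and disjointness are then immediate from the tower-of-$\bP^1$ structure.

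For the map $\rho\colon SOR_N\to SOF^a_N$, $(V_{i,j})\mapsto(V_{i,i})_{i=1}^n$: surjectivity follows by the standard argument (as in \cite{FFi,FFL}) — given $(V_k)_{k=1}^n\in SOF^a_N$, one fills in $V_{i,j}$ for $j>i$ by applying the projections $pr_{i+1},\dots,pr_j$ to generate subspaces and then spanning, checking inductively that the dimension stays $i$ and the isotropy of $V_{i,N-i}$ is inherited from isotropy of $V_n$ (here one uses $V_k^\perp=V_{N-k}$ and that $pr$ is the identity on the relevant coordinates). To get birationality, exhibit a dense open $U\subset SOF^a_N$ over which $\rho$ is an isomorphism: take $U$ to be the locus where all the relevant Plücker coordinates $X_{1,\dots,i}$ of $V_i$ are nonzero — on this locus the filling-in is unique and algebraic, giving an inverse. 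One must note that $\rho$ need not be birational onto all of $SOF^a_N$ but onto each irreducible component; comparing $2^n$ components of source and of target (Theorem 0.1) and dimensions matches up.

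\emph{Main obstacle.} The delicate step is the analysis of the anti-diagonal entries $V_{i,N-i}$ and the claim that, after fixing a component of $OGr(i,2i)$, the residual moduli is still an iterated $\bP^1$-bundle compatibly glued with the off-anti-diagonal $\bP^1$-tower — i.e. that the isotropy constraint interacts with the flag constraints $V_{i-1,*}\subset V_{i,*}$ and $pr\,V_{i,*}\subset V_{i,*+1}$ so as to preserve the tower structure rather than creating singular incidence loci. This amounts to choosing the order in which the $V_{i,j}$ are built (I would build along anti-diagonals, from $i+j$ small to large, and within each anti-diagonal from small $i$ to large $i$) so that at each step the new subspace either is uniquely determined, varies in a $\bP^1$, or — only at the start of each anti-diagonal $i+j=N$ — selects one of two components of an orthogonal Grassmannian and then continues as a $\bP^1$-tower. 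Verifying this requires the explicit coordinate description of $OGr(i,2i)$ and a careful check that the forced containments land in a single Schubert cell of the chosen component; the rest is routine dimension counting.
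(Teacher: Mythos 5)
Your overall strategy — build $SOR_N$ entry by entry, getting a $\bP^1$ at each off–anti-diagonal step and a two-point fiber at each anti-diagonal step, then pass to $SOF^a_N$ by the usual dense-open argument — is the same as the paper's, and your component count ($2^n$) and dimension count match. But there is a genuine gap in the way you order the construction, and it is not merely cosmetic.

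You propose to build either column by column with $j$ \emph{increasing} ("fix $V_{\bullet,1},\dots,V_{\bullet,j-1}$, then add $V_{\bullet,j}$"), or else along anti-diagonals with $i+j$ \emph{increasing}. In both of your orderings the very first space to be chosen is $V_{1,1}$, which is subject to no constraint beyond $V_{1,1}\subset W_{1,1}=E$; this is a $\bP^{N-1}$, not a $\bP^1$, and the subsequent steps fail similarly: with $j$ increasing (or $i+j$ increasing) the only usable constraints on a new $V_{i,j}$ are \emph{containment from below} ($V_{i-1,j}\subset V_{i,j}$ and $pr_j V_{i,j-1}\subset V_{i,j}$), which either determine $V_{i,j}$ uniquely or leave a projective space of the wrong dimension. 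The paper instead orders the pairs with $j$ \emph{decreasing}, starting from $(1,N-1)$ on the anti-diagonal and running $i=1,\dots$ within each column before moving to column $j-1$. With that order, when $V_{i,j}$ is added one already knows $V_{i,j+1}$, so the constraint $pr_{j+1}V_{i,j}\subset V_{i,j+1}$ forces $V_{i,j}\subset V_{i,j+1}\oplus\bC e_{j+1}$, an $(i+1)$-dimensional space; together with $V_{i-1,j}\subset V_{i,j}$ this pins $V_{i,j}/V_{i-1,j}$ to a line in a $2$-dimensional quotient, giving exactly a $\bP^1$. The anti-diagonal entries $V_{i,N-i}$ are then the \emph{first} entry of each column (not the last, as in your scheme), and the fiber over $V_{i-1,N-i}$ is the set of $i$-dimensional isotropic subspaces of the $2i$-dimensional nondegenerate $W_{i,N-i}$ containing the $(i-1)$-dimensional $V_{i-1,N-i}$ — two points.

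Your "main obstacle" — whether the isotropy constraint on the anti-diagonal is compatible with the flag and projection constraints without creating bad incidence loci — is in fact dissolved by this ordering: one checks (inductively, using $pr_{j+1}V_{i,j}\subset V_{i,j+1}$ and the identity $(v,w)=(pr_{j+1}v,pr_{j+1}w)$ for $v,w\in W_{i,j}$) that every $V_{i,j}$ with $i+j<N$ built this way is automatically isotropic for the restricted form, so the two-point fiber at the next anti-diagonal entry is nonempty over all of the intermediate variety. Invoking the two components of $OGr(i,2i)$ and a putative Schubert-cell analysis, as you suggest, is not needed and, as you framed it, does not close the gap, because in your order the relevant constraint data ($V_{i,j+1}$) has not been constructed yet when $V_{i,j}$ is chosen. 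The treatment of the forgetful map $SOR_N\to SOF^a_N$ in your last paragraph is fine and agrees with the paper.
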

\begin{proof}
A point of $SOR_N$ is determined by the collection $(V_{i,j})$ with $1\le i\le j\le n$, $i+j\le N$.  
We approximate $SOR_N$ by the chain $R_k$, $k=1,\dots, \lfloor N^2/4\rfloor$ of varieties such that 
$R_0={\rm pt}\sqcup {\rm pt}$, $R_{\lfloor N^2/4\rfloor}=SOR_N$ and each $R_k$ is either fibered over $R_{k-1}$ with a fiber 
being $\bP^1$ or $R_k$ is isomorphic to a disjoint union of two copies of $R_{k-1}$.

We order all pairs $(i,j)$, $1\le i\le n$, $i+j\le N$ into the sequence $(i_k,j_k)$, 
$k=1,\dots, \lfloor N^2/4\rfloor$ in the following way: $(i_1,j_1)=(1,N-1)$ and $(i_{k+1},j_{k+1})=(i_k+1,j_k)$
if ($i_k+j_k<N$ and $i_k<j_k$) and $(i_{k+1},j_{k+1})=(1,j_k-1)$ otherwise. The last pair in our sequence is 
$(1,1)$. For example, for $N=6$ our ordering looks as follows:
\[
(1,5), (1,4), (2,3), (1,3), (2,3), (3,3), (1,2), (2,2), (1,1).
\]
We define $R_k$ as the collection of $V_{i_l,j_l}$, $l\le k$ such that all the conditions from Definition \ref{SOR} hold. 

Let us compare $R_k$ with $R_{k-1}$. First, let $k=1$. Then $R_1$ consists of $1$-dimensional isotropic subspaces
of a $2$-dimensional vector space spanned by $e_1$ and $e_N$ with $(e_1,e_N)=1$. Clearly, $R_1$ is a disjoint union of two points.
Second, assume $i_k+j_k<N$. Then the same argument as in \cite{FFi,FFL} shows that the natural map $R_{k}\to R_{k-1}$
forgetting the component $V_{i_k,j_k}$ is a $\bP^1$ fibration.
Third, assume $i_k+j_k=N$. We consider the natural forgetful map $R_k\to R_{k-1}$. Assume we are given a point        
$(V_{i_a,j_a})_{a\le k-1}\in R_{k-1}$. Then the preimage of such a point is isomorphic to the variety of   
of isotropic subspaces $V_{i_k,j_k}$ such that 
$${\mathrm{span}}(e_1,\dots,e_{i_k},e_{j_k},\dots,e_N)\supset V_{i_k,j_k}\supset V_{i_k-1,j_k}.$$ 
Since $\dim V_{i_k-1,j_k}=i_k-1$, the preimage is isomorphic to a disjoint union of two points.  
We note that the disjoint components of $SOR_N$ are determined by the parity of the dimensions 
$\dim V_{a,N-a}\cap {\mathrm{span}}(e_1,\dots,e_a)$. 

The last claim of the theorem is proved in the same way as in the symplectic case.
\end{proof}

\begin{cor}
$\dim SOF^a_N=\dim SO_N/B=\dim\fn_-$.
\end{cor}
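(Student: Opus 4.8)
The plan is to read off the dimension of $SOF^a_N$ directly from the preceding theorem on $SOR_N$, and then to match it against the known dimension of the classical orthogonal flag variety $SO_N/B$ (equivalently $\dim\fn_-$ for the corresponding type). First I would invoke the last assertion of the previous theorem: the forgetful map $SOR_N\to SOF^a_N$ is surjective and a birational isomorphism on each component, so $\dim SOF^a_N$ equals the dimension of any irreducible component of $SOR_N$. By the same theorem this common dimension is $(N-1)^2/4$ for odd $N$ and $N(N-2)/4$ for even $N$; note that both formulas are subsumed by $\ceil{\frac{N-1}{2}}\floor{\frac{N-1}{2}}$, matching the first theorem of the introduction.

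Next I would compare with the classical side. For $N=2n+1$ we are in type $B_n$, where $\dim\fn_-$ is the number of positive roots $n^2$, and indeed $(N-1)^2/4=(2n)^2/4=n^2$. For $N=2n$ we are in type $D_n$, where the number of positive roots is $n(n-1)$, and $N(N-2)/4=2n(2n-2)/4=n(n-1)$. In both cases this also equals $\dim SO_N/B$, since $SO_N/B$ is a smooth projective variety whose dimension is the number of positive roots. So the corollary follows by a direct arithmetic check in the two parities, with no further geometry needed.

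I do not expect any genuine obstacle here: the content is entirely in the previous theorem, and what remains is the elementary identity $\lfloor N^2/4\rfloor-\#\{\text{off-diagonal pairs}\}$ working out to the number of positive roots in types $B$ and $D$. If one wanted to be slightly more careful, the only point worth a sentence is that "birational isomorphism" between (possibly reducible) varieties is here meant componentwise, so that dimension is preserved; this is already how the statement of the preceding theorem is phrased, so it may simply be quoted. Thus the write-up will be short: cite the theorem for $\dim SOR_N$, cite birationality for $\dim SOF^a_N=\dim SOR_N$, and finish with the two-line computation identifying this with $\dim SO_N/B=\dim\fn_-$.
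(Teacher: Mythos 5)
Your proof is correct and is exactly the reasoning the paper leaves implicit: the corollary is stated with no proof because it is an immediate consequence of the preceding theorem, read off by the surjective birational map $SOR_N\to SOF^a_N$ together with the dimension count $(N-1)^2/4$ (odd $N$) or $N(N-2)/4$ (even $N$) for each component, and the standard facts $|\Phi^+(B_n)|=n^2$, $|\Phi^+(D_n)|=n(n-1)$, $\dim G/B=\dim\fn_-=|\Phi^+|$. Your remark that the birational isomorphism is to be understood componentwise (so that equi-dimensionality of $SOR_N$ transfers to $SOF^a_N$) is the right caveat, and the two-line arithmetic check closes the argument.
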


\begin{cor}
$SOF^a_N$ is reducible and equi-dimensional.
The number of irreducible components is equal to $2^n$.
\end{cor}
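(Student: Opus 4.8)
The plan is to read everything off the resolution of singularities $\pi\colon SOR_N\to SOF^a_N$ furnished by the preceding theorem. By that theorem $SOR_N$ is a disjoint union $\bigsqcup_{\ve}SOR_N^{\ve}$ of $2^n$ smooth connected components, each a tower of $\bP^1$‑fibrations over a point and hence irreducible of one and the same dimension $D$ ($D=(N-1)^2/4$ for odd $N$, $D=N(N-2)/4$ for even $N$); by the preceding Corollary this $D$ is also $\dim SOF^a_N$. Being a closed subvariety of a product of Grassmannians, $SOR_N$ is projective, so $\pi$ is proper, in particular closed, and since $\pi$ is surjective we get $SOF^a_N=\bigcup_{\ve}\pi(SOR_N^{\ve})$, a finite union of irreducible closed subvarieties each of dimension $\le D$. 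Already this shows $SOF^a_N$ has at most $2^n$ irreducible components.

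To get exactly $2^n$, all of dimension $D$, I would use the birational part of the theorem. A birational isomorphism provides a dense open $V\subseteq SOR_N$ on which $\pi$ restricts to an isomorphism onto a dense open $\mathcal U=\pi(V)\subseteq SOF^a_N$ (concretely $V=SOR^0_N$, the locus where the Plücker coordinates $X_{1,\dots,i}$ of the diagonal spaces $V_{i,i}$ are nonzero, exactly as in the odd symplectic proof). Being dense, $V$ meets every component, so $V=\bigsqcup_{\ve}(V\cap SOR_N^{\ve})$ with all pieces nonempty open, and $\mathcal U=\bigsqcup_{\ve}\mathcal U_{\ve}$ where $\mathcal U_{\ve}:=\pi(V\cap SOR_N^{\ve})$ is a nonempty open subset of $SOF^a_N$. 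The $\mathcal U_{\ve}$ are the connected components of $\mathcal U$, hence closed in $\mathcal U$, so $\overline{\mathcal U_{\ve}}\cap\mathcal U=\mathcal U_{\ve}$; consequently the closures $\overline{\mathcal U_{\ve}}$ are pairwise incomparable irreducible closed subvarieties. Each $\overline{\mathcal U_{\ve}}$ has dimension $\dim(V\cap SOR_N^{\ve})=\dim SOR_N^{\ve}=D$ (the open set $V\cap SOR_N^{\ve}$ is dense in the irreducible $SOR_N^{\ve}$), while $\overline{\mathcal U_{\ve}}\subseteq\pi(SOR_N^{\ve})$ with $\dim\pi(SOR_N^{\ve})\le D$; hence $\overline{\mathcal U_{\ve}}=\pi(SOR_N^{\ve})$.

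Putting this together, $SOF^a_N=\bigcup_{\ve}\pi(SOR_N^{\ve})=\bigcup_{\ve\in\{0,1\}^n}\overline{\mathcal U_{\ve}}$ is the union of $2^n$ pairwise distinct (indeed pairwise incomparable) irreducible closed subvarieties, all of dimension $D$; these are therefore precisely its irreducible components. Thus $SOF^a_N$ is equi‑dimensional of dimension $D=\dim\fn_-$ with exactly $2^n=2^{\floor{N/2}}$ irreducible components, and since $2^n\ge 2$ whenever $N\ge 2$, it is reducible.

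The step I expect to be the real obstacle is the one already hidden inside the preceding theorem's proof: verifying that the isomorphism locus of $\pi$ is dense in $SOR_N$, i.e. that it meets every one of the $2^n$ components. Equivalently, one must check that the binary choices made at the isotropy steps $V_{i,N-i}$ of the tower do not force any diagonal coordinate $X_{1,\dots,i}(V_{i,i})$ to vanish identically on a component. This is where the explicit $\bP^1$‑fibration structure is needed: inside a fixed component, with all branching choices fixed, the spaces $V_{i,i}$ still vary in a full family, so each $X_{1,\dots,i}(V_{i,i})$ is nonzero on a dense open of $SOR_N^{\ve}$, and — $SOR_N^{\ve}$ being irreducible — the finitely many conditions $X_{1,\dots,i}(V_{i,i})\ne 0$ cut out a nonempty open subset of it. Granting this, the remainder of the argument is purely formal, as above.
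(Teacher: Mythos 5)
Your proof is correct and is precisely the argument the paper intends (the corollary is stated with no separate proof, as an immediate consequence of the theorem on $SOR_N$): you use that $\pi$ is proper and surjective with irreducible source components, plus the birationality to see that the images $\pi(SOR_N^\ve)$ are distinct $D$-dimensional irreducible closed sets covering $SOF^a_N$. The one genuinely delicate point you flag — that the isomorphism locus $SOR^0_N$ meets every one of the $2^n$ components — is really part of justifying the theorem's claim that $\pi$ is birational (the paper defers this to the symplectic case, where irreducibility of $SpR_{2n}$ makes density automatic, whereas here one must check it componentwise as you indicate); granting the theorem as stated, the rest of your argument is a clean and complete deduction.
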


Let us count the number of coordinate flags in $SOF^a_N$. 

\begin{lem}
The number of coordinate flags in $SOF^a_N$ is equal to the number of symmetric Dellac configurations of size
$N\times 2N$. 
\end{lem}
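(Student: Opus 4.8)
The plan is to make the defining conditions of a coordinate flag of $SOF^a_N$ completely explicit and then recognize the resulting set of index collections as one already counted in this paper. First I would note that a coordinate flag of $SOF^a_N$ is determined by a collection $\bI=(I_1,\dots,I_n)$ of subsets of $\{1,\dots,N\}$ with $|I_k|=k$ and $V_k={\rm span}(e_i:i\in I_k)$, and that, exactly as in type $A$, the incidence relations $pr_{k+1}V_k\subset V_{k+1}$ are equivalent to $I_k\subset I_{k+1}\cup\{k+1\}$. So the only remaining task is to translate the isotropy condition imposed on $V_n$ into a condition on $I_n$.

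For even $N=2n$ the form is $(e_i,e_j)=\delta_{i+j,2n+1}$, and for the coordinate subspace $V_n$ the condition $(V_n,V_n)=0$ holds precisely when no two elements of $I_n$ sum to $2n+1$, i.e. when $a\in I_n$ implies $2n+1-a\notin I_n$ (and there is no further parity restriction, which is consistent with $SOF^a_{2n}$ being reducible). These are exactly the collections of $\mathcal{I}_{2n}$ from \eqref{CI}. I would then invoke the bijection $\bI\mapsto S(\bI)$ between $\mathcal{I}_{2n}$ and $SDC_{2n}$ recalled in the proof of Theorem \ref{thm:dimDellac} (following \cite{F1}): it identifies the coordinate flags of $SOF^a_{2n}$ with the symmetric Dellac configurations of size $2n\times 4n=N\times 2N$.

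For odd $N=2n+1$ the form is $(e_i,e_j)=\delta_{i+j,2n+2}$, so that $e_{n+1}$ pairs nontrivially with itself. Here $pr_{n+1}V_n={\rm span}(e_b:b\in I_n,\ b\ne n+1)$, hence $(V_n,pr_{n+1}V_n)=0$ is equivalent to the absence of a pair $a\in I_n$, $b\in I_n\setminus\{n+1\}$ with $a+b=2n+2$; since $n+1$ is the unique fixed point of $a\mapsto 2n+2-a$, this amounts to the requirement that $a\in I_n$ with $1\le a\le n$ implies $2n+2-a\notin I_n$. These are precisely the collections of $\mathcal{I}_{2n+1}$ introduced in Section \ref{OSC}, so Lemma \ref{cf-sdc} finishes the job: the coordinate flags of $SOF^a_{2n+1}$ are in bijection with the odd symmetric Dellac configurations in $SDC_{2n+1}$, which have size $(2n+1)\times(4n+2)=N\times 2N$.

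I do not expect a genuine obstacle: the whole content is the elementary translation of the isotropy condition on $V_n$ into a pairing condition on $I_n$, together with the observation that no new combinatorial object appears — the even case reduces to the count behind Theorem \ref{thm:dimDellac}, the odd case to Lemma \ref{cf-sdc}. The only point asking for care is the odd orthogonal case, where the middle index $n+1$ is allowed to belong to $I_n$ (in contrast with classical type $B$, where $a\in I_n\Rightarrow 2n+2-a\notin I_n$ forbids $n+1$); this is exactly why one obtains the full set $SDC_{2n+1}$ rather than a proper subset of it.
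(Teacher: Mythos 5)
Your proof is correct and takes essentially the same approach as the paper's (much terser) proof: translate the definition of a coordinate flag of $SOF^a_N$ into constraints on the index collection $\bI$, then recognize the resulting sets as $\mathcal{I}_{2n}$ (even case) and $\mathcal{I}_{2n+1}$ (odd case), which were already identified with $SDC_{2n}$ and $SDC_{2n+1}$ respectively. You are in fact more careful than the paper, whose proof writes the condition as ``$a\in I_n$ implies $2n+1-a\notin I_n$'' without distinguishing parity (for odd $N$ it should be $N+1-a=2n+2-a$), and you correctly observe that the use of $(V_n,pr_{n+1}V_n)=0$ rather than $(V_n,V_n)=0$ in the odd case is precisely what permits $n+1\in I_n$ and hence yields the full set $\mathcal{I}_{2n+1}$.
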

\begin{proof}
We are interested in collections $(I_1,\dots,I_n)$ of subsets of $\{1,\dots,N\}$ such that 
$I_k\subset I_{k+1}\cup\{k+1\}$ and $a\in I_n$ implies $2n+1-a\notin I_n$. As shown above
the number of such collections is equal to the number of symmetric Dellac configurations 
(these are exactly the numbers $r_n$ and $l_n$ from \cite{RZ,BF}).  
\end{proof}

Finally, let us give a combinatorial rule to compute the Poincar\'e polynomials of the varieties $SOF^a_N$.
We construct cellular decomposition of the varieties $SOF^a_N$ in the same way as we did in other types:
if a coordinate flag is given by a collection $\bI=(I_k)_{k=1}^n$ ($N=2n$ or 
$N=2n+1$), then the cell $C(\bI)$ is the subset of  $SOF^a_N$ consisting of subspaces $(V_k)_{k=1}^n$ such that $V_k$
has a basis of the form
\[
e_a+\sum_{b<_k a} c_{a,b} e_b,\ a\in I_k.
\] 
One easily sees that each $C(\bI)$ is isomorphic to an affine cell whose dimension can be computed 
via the following statistics $\widetilde{inv}'$ on the set $SDC_N$.

\begin{dfn}
Let $D\in SDC_N$. Then 
$\widetilde{inv}'(D)$ is equal to the half of the number of inversions which are NOT symmetric with respect to the center.  
\end{dfn} 

For example, the values of the statistics $\widetilde{inv}'$ on the configurations of Figure \ref{fig:SpDC4}
are given by 
\begin{gather*}
0 \ 0 \ 1 \ 1 \ 1\\
1\ 2\ 2\ 2\ 2.
\end{gather*}

Recall that to each $\bI\in {\mathcal I}_N$ we have attached an element $S(\bI)\in SDC_N$.

\begin{thm}
Each $C(\bI)$ is an affine cell. 
$SOF^a_N$ is equal to the  disjoint union $C(\bI)$ for all $\bI\in {\mathcal I}_N$.
The dimension of $C(\bI)$ is equal to $\widetilde{inv}'(S(\bI))$.
\end{thm}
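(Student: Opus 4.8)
The plan is to run the same three-part strategy already used for $SpF^a_{2n}$ in Sections \ref{ESC}--\ref{OSC}, adapted to the symmetric bilinear form. First I would verify that $SOF^a_N = \bigsqcup_{\bI \in \mathcal{I}_N} C(\bI)$ set-theoretically: every point $(V_1,\dots,V_n) \in SOF^a_N$ lies in a unique $C(\bI)$, where $I_k$ records the ``leading'' basis vectors of $V_k$ in the ordering $<_k$; this is the same reduced row-echelon argument as in type $A$ (see \eqref{celldescrA}), the only new ingredient being that the compatibility $I_k \subset I_{k+1} \cup \{k+1\}$ survives the constraint $pr_{k+1}V_k \subset V_{k+1}$ exactly as before, and the isotropy of $V_n$ translates into the admissibility condition $a \in I_n \Rightarrow N+1-a \notin I_n$ (recall $(e_i,e_j)=\delta_{i+j,N+1}$), which is precisely membership in $\mathcal{I}_N$.

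Second, I would show each nonempty $C(\bI)$ is an affine cell and compute its dimension, mimicking Lemma \ref{lem:ac} and Theorem \ref{thm:algorithm}. Embed $C(\bI)$ into the type $A$ cell $C_A(\bI) \subset F^a_{\mathbf d}$; then $C(\bI)$ is cut out inside the affine space $C_A(\bI)$ by the isotropy conditions $(l_{a_1}, l_{a_2}) = 0$ for $a_1, a_2 \in I_n$. Because the form is now \emph{symmetric} (not skew), the pair $(a_1,a_2) = (a_1,a_1)$ contributes a nontrivial equation $2x_{a,N+1-a} + (\text{lower order}) = 0$, not the automatically-satisfied $0 = 0$ of the symplectic case; this is the main structural difference and is exactly why the off-diagonal pairs get counted once rather than symmetrized. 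After expanding each product $(l_{a_1},l_{a_2})$ in the basis as in the proof of Theorem \ref{thm:algorithm}, one finds that the equations are in bijection with pairs (including diagonal ones) giving $s$-type relations, and that these relations are linearly independent in the echelon coordinates $x_{a,b}$, so $C(\bI)$ is an affine space whose dimension is $d_A(\bI)$ minus the number of such relations.

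Third, I would translate that count into the statistic $\widetilde{inv}'(S(\bI))$. Under the bijection $\bI \mapsto S(\bI) \in SDC_N$ recalled in the proof of Theorem \ref{thm:dimDellac}, an inversion of $S(\bI)$ is either symmetric under the central reflection $R$ (a ``diagonal'' self-paired inversion) or comes in a genuine $R$-orbit of size two. Running the inductive bookkeeping of \eqref{eq:inductiondk}, the white disks below an initiating disk account for $d_A(\bI)$-type contributions, while each subtracted $s(l,l',k)=1$ kills exactly one inversion \emph{pair} — but now, unlike the even symplectic case where $\widetilde{inv}$ collapses each pair to a single class, here each surviving non-symmetric pair must be counted as $2$ in $d_A$ and the subtraction removes $1$, leaving the net count equal to (number of non-symmetric inversions)$/2$, i.e. $\widetilde{inv}'(S(\bI))$. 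The main obstacle I anticipate is this last accounting step: one must track carefully how the symmetric (diagonal) inversions versus the paired ones enter $d_A(\bI)$ and the $s$-corrections, since the orthogonal normalization of $\widetilde{inv}'$ differs from the symplectic $\widetilde{inv}$ precisely by how the self-reflected inversions are weighted, and getting the half-integer cancellation to come out to an integer requires checking that the symmetric inversions always appear with the right multiplicity. The reducibility of $SOF^a_N$ plays no role here: the cell decomposition is of the whole variety, and one can read off the $2^n$ components afterward from the parities $\dim(V_{a,N-a} \cap \mathrm{span}(e_1,\dots,e_a)) \bmod 2$ as in the resolution theorem, but that is not needed for the dimension formula itself.
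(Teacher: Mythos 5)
Your overall strategy is the same as the paper's (the paper itself only gives a one-line proof, deferring to the arguments of Lemma~\ref{lem:ac} and Theorem~\ref{thm:dimDellac}), and you have correctly identified the one structurally new feature of the orthogonal case: the diagonal isotropy relations $(l_a,l_a)=0$, which are no longer automatic for a symmetric form and therefore add extra linear cut-out equations on the type~$A$ cell beyond the off-diagonal ones. These extra diagonal relations are precisely what shifts the answer from $\widetilde{inv}$ (symplectic) to $\widetilde{inv}'$ (orthogonal), since $\widetilde{inv}'(S(\bI))=\widetilde{inv}(S(\bI))-\#\{\text{$R$-symmetric inversions of $S(\bI)$}\}$.

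Two points deserve a correction. First, your accounting claim that ``each surviving non-symmetric pair must be counted as $2$ in $d_A$ and the subtraction removes $1$'' is false in general: an $R$-orbit $\{(p,q),(R(q),R(p))\}$ of inversions contributes $2$ to $d_A(\bI)$ (the count of inversions whose first point lies in columns $1,\dots,n$) only when $p$ lies in columns $\le n$ and $q$ in columns $\ge n+1$; if both $p,q$ lie in columns $\le n$, the partner inversion lies entirely in columns $\ge n+1$, the orbit contributes only $1$ to $d_A$, and it receives no $s$-subtraction. The net contribution to the dimension is still $1$, so your final formula is right, but the cleaner route is to observe that the off-diagonal $s$-corrections reduce $d_A(\bI)$ to $\widetilde{inv}(S(\bI))$ exactly as in Theorem~\ref{thm:dimDellac}, and then to check that the diagonal relations $(l_a,l_a)=0$ with $a\in I_n$, $N+1-a<_n a$ are in bijection with the $R$-symmetric inversions of $S(\bI)$, which gives the further drop to $\widetilde{inv}'$. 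Second, for odd $N$ the isotropy condition is $(V_n,pr_{n+1}V_n)=0$, not $(V_n,V_n)=0$; the projection $pr_{n+1}$ is what kills the quadratic term coming from $(e_{n+1},e_{n+1})=1$ and keeps the cut-out equations linear, and the diagonal equation is simply $x_{a,N+1-a}=0$ (there is no lower-order term in the echelon coordinates). Once these two bookkeeping details are in place, your argument goes through.
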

\begin{proof}
The first and the second claims are proved in the same way as Lemma \ref{lem:ac}.
The proof of the third claim is analogous to the proof of Theorem \ref{thm:dimDellac}.
\end{proof}

We thus arrive at the following formula for the Poincar\'e polynomials.

\begin{cor}
$P(SOF^a_N)=\sum_{D\in SDC_N} q^{\widetilde{inv}'(D)}$.
\end{cor}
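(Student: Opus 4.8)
The plan is to deduce the formula directly from the theorem immediately preceding this corollary, by the standard computation of Betti numbers from an affine paving. First I would recall the general principle: if a complex algebraic variety $X$ is the disjoint union of finitely many locally closed subvarieties, each isomorphic to an affine space, and if these pieces can be ordered $C_1,\dots,C_m$ so that every partial union $C_1\cup\dots\cup C_r$ is Zariski closed in $X$, then $H^\ast(X,\bZ)$ is free and concentrated in even degrees, and for each $d$ the rank of $H^{2d}(X,\bZ)$ equals the number of pieces of complex dimension $d$; in particular the Poincar\'e polynomial satisfies $P(X)=\sum_{i=1}^{m}q^{\dim_\bC C_i}$. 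I would apply this to $X=SOF^a_N$ equipped with the decomposition $\{C(\bI)\}_{\bI\in\mathcal{I}_N}$ supplied by that theorem.

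The one point to check is the paving (ordering) condition. As in the type $A$ case of \cite{F1}, and exactly as in the proof of Lemma \ref{lem:ac}, each $C(\bI)$ is obtained from the corresponding Schubert-type cell of the ambient (partial) flag variety by imposing closed conditions, so the cells $C(\bI)$ are the traces on $SOF^a_N$ of an ambient paving; since the restriction of a paving to a closed subvariety is again a paving, this condition holds and one obtains $P(SOF^a_N)=\sum_{\bI\in\mathcal{I}_N}q^{\dim C(\bI)}$. Alternatively, I could simply order the collections $\bI$ by the value $\dim C(\bI)$ and observe that a degenerating family inside one cell can specialize only into cells of strictly smaller dimension; either route closes this step.

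Next I would substitute the dimension formula $\dim C(\bI)=\widetilde{inv}'(S(\bI))$ (the third assertion of the theorem) and re-index the sum along the bijection $\bI\mapsto S(\bI)$ from $\mathcal{I}_N$ onto $SDC_N$. This bijection is the restriction to the coordinate flags of $SOF^a_N$ of the classical correspondence between coordinate flags and Dellac configurations recalled in the proof of Theorem \ref{thm:dimDellac}, combined with the lemma above identifying the coordinate flags of $SOF^a_N$ with the symmetric Dellac configurations of size $N\times 2N$. Changing variables in the sum then yields $P(SOF^a_N)=\sum_{\bI\in\mathcal{I}_N}q^{\widetilde{inv}'(S(\bI))}=\sum_{D\in SDC_N}q^{\widetilde{inv}'(D)}$, which is the assertion.

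I do not anticipate a serious obstacle, since all the substantial content — that the $C(\bI)$ are affine cells, that they cover $SOF^a_N$, and the dimension formula — is already contained in the theorem. The only subtlety worth keeping in mind is that $SOF^a_N$ is reducible, so the affine-paving argument must be run on the whole variety at once rather than component by component; this causes no trouble, as the paving property is inherited from the ambient flag variety (or established by the dimension-ordering argument) with no irreducibility hypothesis, and the Betti-number statement for pavings likewise needs none.
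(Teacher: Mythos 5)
Your proof is correct and takes essentially the same approach as the paper. The paper presents this statement as an immediate consequence of the preceding theorem (affine cell decomposition of $SOF^a_N$ together with the dimension formula $\dim C(\bI)=\widetilde{inv}'(S(\bI))$), and your proposal merely makes explicit the standard affine-paving Betti-number argument and the reindexing via the bijection $\bI\mapsto S(\bI)$ between $\mathcal{I}_N$ and $SDC_N$ that the paper leaves implicit.
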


\appendix
\section{Examples of Poincar\'e polynomials}
Based on the computations below, we put forward the following conjecture (which we are not
able to prove even in type A).

\begin{conj}
The Poincar\'e polynomials in all five types (type $A$, odd and even symplectic and orthogonal)
are unimodular.
\end{conj}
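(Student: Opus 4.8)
The plan is to reduce, type by type, to a statement about the explicit combinatorial Poincaré polynomials produced above, and then to attack that statement along three complementary routes. In each of the five cases the relevant degenerate flag variety ($F^a_N$, $SpF^a_N$ or $SOF^a_N$) admits a cellular decomposition into affine cells (Theorems \ref{thm:algorithm} and \ref{thm:dimDellac}, together with the results of Sections \ref{OSC} and \ref{OC}), so $H^{odd}(\cdot;\bQ)=0$ and the Poincaré polynomial equals the generating function $\sum_D q^{c(D)}$ over the appropriate set of (symmetric) Dellac configurations, with $c$ one of $inv$, $\widetilde{inv}$, $\widetilde{inv}'$. Unimodality is thus equivalent to unimodality of these $q$-enumerators, and the first preliminary step is to pin down the conjectural mode: the tables in the Appendix suggest the coefficients peak near the middle degree $\lfloor d/2\rfloor$, $d=\dim$, attained by a "most balanced" configuration, which supplies a candidate center for the putative Lefschetz pencil. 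Note that the polynomials are visibly not palindromic (already $1+2q+3q^2+q^3$ in type $A_2$), so what is wanted is genuine unimodality, not the symmetry part of Hard Lefschetz.

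The first route is geometric. Since the $F^a_E$ are singular, honest Hard Lefschetz on $H^*(F^a_E)$ fails; instead one exploits the resolutions $R_N$, $SpR_{2n}$, $SpR_{2n+1}$ and $SOR_N$, each of which is (a disjoint union of) towers of $\bP^1$-fibrations, hence smooth projective with Poincaré polynomial a power of $(1+q)$ (times $2^n$ in the orthogonal case), which is symmetric and unimodal. Applying the decomposition theorem to $\rho\colon\widetilde X\to X$ (with $X=F^a_E$ normal, $\widetilde X$ its resolution) exhibits $H^*(X;\bQ)$ as a direct summand of $H^*(\widetilde X;\bQ)$ and identifies the complementary summands with shifted intersection cohomologies $IH^{*-2s_\alpha}(\overline{Z_\alpha})$ of closures of the relevant strata, each of which carries an $\msl_2$-action by Hard Lefschetz and is therefore symmetric unimodal. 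The task is then (i) to identify the strata $\overline{Z_\alpha}$, which one expects to be, up to known $(1+q)$-factors, varieties of the same nature in smaller rank, so that an induction on $N$ opens up; and (ii) to show that the shifts $s_\alpha$ are arranged so that the resulting sum of symmetric unimodal polynomials is again unimodal, i.e.\ that the modes are synchronized.

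The second route is through Schubert geometry. In type $A$ the variety $F^a_N$ is realized as a Schubert variety in a partial flag variety \cite{CLL}, and one expects analogous realizations in the symplectic and orthogonal cases; unimodality of $P(F^a_E)$ would then follow from unimodality of the rank generating function of the corresponding lower Bruhat interval. Bj\"orner--Ekedahl-type arguments (Hard Lefschetz on the $IC$ sheaf of a Schubert variety) already give that such rank functions are top-heavy and nondecreasing up to the middle degree; what is missing is monotonicity on the upper half, which does not follow from any palindromy. The third, purely combinatorial, route is to produce unimodality directly: for each of $inv$, $\widetilde{inv}$, $\widetilde{inv}'$ one seeks an injection from configurations with statistic $k$ into those with statistic $k+1$ for $k$ below the conjectural mode, and a matching injection in the opposite direction above it, ideally realized by explicit local moves on the points of a configuration compatible with the central symmetry; a cleaner variant would be a symmetric chain decomposition of the underlying poset.

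In all three routes the essential, and as yet unresolved, point is the same: the absence of palindromy means that the standard Lefschetz/representation-theoretic machinery controls the polynomial only up to its middle degree, and one needs a genuinely new argument to force the single-peak shape on the high-degree side. I therefore expect the main obstacle to be precisely this "upper-half monotonicity'': locating the mode exactly and proving, geometrically via the synchronization of the shifts $s_\alpha$ in the decomposition theorem (or combinatorially via the point at which the natural local moves on Dellac configurations cease to be injective), that the coefficients do not oscillate past it. The central-symmetry constraint makes the symmetric and orthogonal cases strictly harder than the already nontrivial type $A$ case, which is why the authors do not claim a proof even there.
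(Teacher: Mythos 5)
This statement is not proved in the paper at all: it is stated as a conjecture, and the authors say explicitly that they are unable to prove it even in type $A$. So there is no proof of record to compare you with, and your text, by its own admission, does not close the gap either. Your reduction of the problem is correct as far as it goes --- the cellular decompositions do show that odd cohomology vanishes and that the five Poincar\'e polynomials are the generating functions of $inv$, $\widetilde{inv}$, $\widetilde{inv}'$ over the appropriate (symmetric) Dellac configurations, and you are right that the polynomials are not palindromic (already $1+2q+3q^2+q^3$), so no symmetry-based Lefschetz argument can suffice. But each of your three routes stops exactly at the point you yourself flag as unresolved: locating the mode and forcing monotonicity on the upper half. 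A plan whose ``essential, and as yet unresolved, point'' is the whole content of the statement is not a proof, so the conjecture remains open after your argument, exactly as it does in the paper.

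One concrete technical caveat if you pursue the geometric route: the decomposition theorem applied to the resolutions $R_N$, $SpR_N$, $SOR_N$ expresses $H^*(\widetilde X)$ as a sum of shifted \emph{intersection} cohomologies of strata closures, and what it realizes as a direct summand of $H^*(\widetilde X)$ is $IH^*(X)$, not $H^*(X)$. The polynomials in the conjecture are ordinary (cellular) Betti numbers of varieties that are singular and, in the orthogonal cases, reducible, so they need not agree with $IH^*$, and the injectivity of $H^*(X)\to H^*(\widetilde X)$ is not automatic; in type $A$ and type $C$ one can instead use the Schubert-variety realizations of \cite{CL,CLL}, where the cellular Betti numbers are the rank function of a lower Bruhat interval and Bj\"orner--Ekedahl-type top-heaviness applies, but no such realization is established here for $SOF^a_N$ (the paper notes these are not the PBW degenerations of $SO_N/B$). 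So even the partial ``lower-half'' statements you invoke are only justified in some of the five cases, and the upper-half monotonicity is missing in all of them.
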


\subsection{Type A}
\begin{align*}
P_{F^a_1}(q) =& 1,\\
P_{F^a_2}(q) =& q+1,\\
P_{F^a_3}(q) =& q^3+3q^2+2q+1,\\
P_{F^a_4}(q) =& q^{6}+6q^5+10q^4+10q^3+7q^2+3q+1.
\end{align*}

\subsection{Odd symplectic}
\begin{align*}
P_{SpF^a_1}(q) =& 1,\\
P_{SpF^a_3}(q) =& q^2 + q + 1,\\
P_{SpF^a_5}(q) =& q^6 + 3q^5 + 5q^4 + 5q^3 + 4q^2 + 2q + 1,\\
P_{SpF^a_7}(q) =& 	
q^{12} + 6q^{11} + 16q^{10} + 29q^{9} + 40q^{8} + 45q^{7} \\&+ 43q^6 + 35q^5 +
25q^4 + 15q^3 + 8q^2 + 3q + 1.
\end{align*}

\subsection{Even symplectic}
\begin{align*}
P_{SpF^a_2}(q) =& q+1,\\
P_{SpF^a_4}(q) =& q^4 + 3q^3 + 3q^2 + 2q + 1,\\
P_{SpF^a_6}(q) =& q^9 + 6q^8 + 13q^7 + 18q^6 + 20q^5 + 17q^4 + 12q^3 + 7q^2 + 3q +
1,\\
P_{SpF^a_8}(q) =& q^{16} + 10q^{15} + 36q^{14} + 79q^{13} + 134q^{12} + 186q^{11} + 220q^{10} +
229q^9 \\&+ 211q^8 + 175q^7 + 130q^6 + 87q^5 + 52q^4 + 27q^3 +
12q^2 + 4q + 1.
\end{align*}

\subsection{Odd orthogonal}
\begin{align*}
P_{SOF^a_1}(q) =& 1,\\
P_{SOF^a_3}(q) =& 2q+1,\\
P_{SOF^a_5}(q) =& 4q^4 + 7q^3 + 6q^2 + 3q + 1,\\
P_{SOF^a_7}(q) =& 8q^9 + 27q^8 + 47q^7 + 56q^6 + 52q^5 + 38q^4 + 23q^3 + 11q^2 +
4q + 1.
\end{align*}

\subsection{Even orthogonal}
\begin{align*}
P_{SOF^a_2}(q) =& 2,\\
P_{SOF^a_4}(q) =& 4q^2 + 4q + 2,\\
P_{SOF^a_6}(q) =& 8q^6 + 20q^5 + 26q^4 + 22q^3 + 14q^2 + 6q + 2,\\
P_{SOF^a_8}(q) =& 16q^{12} + 68q^{11} + 150q^{10} + 230q^9 + 276q^8 + 272q^7 \\&+ 228q^6 + 164q^5 + 102q^4 + 54q^3 + 24q^2 + 8q + 2.
\end{align*}

\section*{Acknowledgments}
The work of EF was partially supported by the grant RSF-DFG 16-41-01013.
The work on the first two sections was supported by the Russian Academic Excellence Project '5-100'.

\end{document}